
\documentclass[reqno]{amsart}
\usepackage[left=1in,right=1in,top=1in,bottom=1in]{geometry}
\setlength{\headheight}{23pt}
\usepackage{tikz}
\usetikzlibrary{shapes,snakes,calc,arrows}
\usetikzlibrary{decorations.pathreplacing,shapes.geometric}
\usetikzlibrary{calc,positioning}
\usepackage{amsthm}
\usepackage{amscd}
\usepackage{amsfonts}
\usepackage{amsmath}
\usepackage{amssymb}
\usepackage{amsrefs}
\usepackage{mathrsfs}
\usepackage{enumitem}
\usepackage{multirow}
\usepackage{verbatim}
\usepackage{url}
\usepackage{graphicx}
\usepackage{yfonts}
\usepackage{color}

\usepackage{microtype}
\usepackage{hyperref}


\tikzset{Box/.style={very thick, rounded corners}}
\tikzset{marked/.style={star, star point height = .75mm, star points =5, fill=black,minimum size=2mm, inner sep=0mm} }
\tikzset{verythickline/.style = {line width=7pt}}
\tikzset{thickline/.style = {line width=5pt}}
\tikzset{medthick/.style = {line width=3pt}}
\tikzset{med/.style = {line width=2pt}}
\tikzset{count/.style = {fill=white,circle,draw,thin, inner sep=2pt}}
\tikzset{rcount/.style = {fill=white,rectangle,draw,thin,inner sep=2pt, rounded corners}}
\tikzset{cpr/.style = {draw,fill=white,rectangle,thin, rounded corners}}

\definecolor{ggreen}{HTML}{00BB33}

\begin{document}


\newcommand{\supp}{\text{supp}}
\newcommand{\Aut}{\text{Aut}}
\newcommand{\Gal}{\text{Gal}}
\newcommand{\Inn}{\text{Inn}}
\newcommand{\Irr}{\text{Irr}}
\newcommand{\Ker}{\text{Ker}}
\newcommand{\N}{\mathbb{N}}
\newcommand{\Z}{\mathbb{Z}}
\newcommand{\Q}{\mathbb{Q}}
\newcommand{\R}{\mathbb{R}}
\newcommand{\C}{\mathbb{C}}
\renewcommand{\H}{\mathcal{H}}
\newcommand{\B}{\mathcal{B}}
\newcommand{\A}{\mathcal{A}}
\newcommand{\Y}{\mathcal{Y}}
\newcommand{\X}{\mathcal{X}}
\newcommand{\K}{\mathcal{K}}
\newcommand{\M}{\mathcal{M}}
\newcommand{\e}{\epsilon}

\newcommand{\J}{\mathscr{J}}
\newcommand{\D}{\mathscr{D}}

\newcommand{\I}{\text{I}}
\newcommand{\II}{\text{II}}
\newcommand{\III}{\text{III}}

\newcommand{\<}{\left\langle}
\renewcommand{\>}{\right\rangle}
\renewcommand{\Re}[1]{\text{Re}\ #1}
\renewcommand{\Im}[1]{\text{Im}\ #1}
\newcommand{\dom}[1]{\text{dom}\,#1}
\renewcommand{\i}{\text{i}}
\renewcommand{\mod}[1]{(\operatorname{mod}#1)} 
\newcommand{\alg}{\operatorname{alg}}
\newcommand{\mb}[1]{\mathbb{#1}}
\newcommand{\mc}[1]{\mathcal{#1}}
\newcommand{\mf}[1]{\mathfrak{#1}}
\newcommand{\mr}{\mathrm}
\newcommand{\im}{\operatorname{im}}

\newcommand{\lat}{\mathrm{lat}}
\newcommand{\lrleq}{\leq_{\mathrm{lat}}}
\newcommand{\lrgeq}{\geq_{\mathrm{lat}}}

\newcommand{\paren}[1]{\left(#1\right)}
\newcommand{\set}[1]{\left\{#1\right\}}
\newcommand{\sq}[1]{\left[#1\right]}
\newcommand{\abs}[1]{\left|#1\right|}

\definecolor{ianeditcolour}{HTML}{2266FF}
\newenvironment{ian}{\color{ianeditcolour}}{}
\newenvironment{brent}{\color{red}}{}


\newtheorem{thm}{Theorem}[subsection]
\newtheorem{prop}[thm]{Proposition}
\newtheorem{lem}[thm]{Lemma}
\newtheorem{cor}[thm]{Corollary}
\newtheorem{innercthm}{Theorem}
\newenvironment{cthm}[1]
  {\renewcommand\theinnercthm{#1}\innercthm}
  {\endinnercthm}
\newtheorem{innerclem}{Lemma}
\newenvironment{clem}[1]
  {\renewcommand\theinnerclem{#1}\innerclem}
  {\endinnerclem}

\theoremstyle{definition}
\newtheorem{defn}[thm]{Definition}
\newtheorem{ex}[thm]{Example}
\newtheorem{nota}[thm]{Notation}
\newtheorem{exam}[thm]{Example}
\newtheorem{rem}[thm]{Remark}
\newtheorem{innercdefi}{Definition}
\newenvironment{cdefi}[1]
  {\renewcommand\theinnercdefi{#1}\innercdefi}
  {\endinnercdefi}
\newtheorem{cons}[thm]{Construction}


\title{Combinatorics of Bi-Freeness with Amalgamation}

\author{Ian Charlesworth}
\author{Brent Nelson}
\author{Paul Skoufranis}
\address{Department of Mathematics, UCLA, Los Angeles, California, 90095, USA}
\email{ilc@math.ucla.edu\\bnelson6@math.ucla.edu\\pskoufra@math.ucla.edu}
\thanks{This research was supported in part by NSF grants DMS-1161411, DMS-0838680 and by NSERC
PGS-6799-438440-2013.}

\begin{abstract}
In this paper, we develop the theory of bi-freeness in an amalgamated setting.
We construct the operator-valued bi-free cumulant functions, and show that the vanishing of mixed cumulants is necessary and sufficient for bi-free independence.
Further, we develop a multiplicative convolution for operator-valued random variables and explore ways to construct bi-free pairs of $B$-faces.
\end{abstract}

\maketitle


\section{Introduction}
\label{sec:intro}

In \cite{V2013-1}, Voiculescu introduced the notion of bi-free probability as a generalization of free probability to study of left and right actions of algebras on a reduced free product space simultaneously.
Voiculescu demonstrated that many results from free probability have direct analogues in the bi-free setting.
In particular, \cite{V2013-1} demonstrated the existence of bi-free cumulant polynomials an analogue to free cumulants, although it did not produce an explicit formula.

Mastnak and Nica defined the $(\ell, r)$-cumulant functions in \cite{MN2013}, via permutations applied to non-crossing partitions.  In addition, they hypothesized that the $(\ell, r)$-cumulants were the correct cumulant functions for bi-free probability and gave rise to the bi-free cumulant polynomials of Voiculescu.
In \cite{CNS2014}, the authors of this paper proved this was the case via a diagrammatic argument.
In addition, we constructed an intuitive multiplicative convolution and a complicated operator model on a Fock space for a pair of faces.

In \cite{V2013-1}*{Section 8}, Voiculescu laid the framework for generalizing bi-free probability to an amalgamated setting.  As the combinatorics of free probability with amalgamation are well-understood (c.f. \cites{NSS2002, NSBook, S1998}), the goal of this paper is to demonstrate that the results of \cite{CNS2014} generalize to an amalgamated setting and that the combinatorics of bi-freeness with amalgamation are natural extensions of the combinatorics of free probability with amalgamation.

This paper contains ten sections beyond this introduction, structured as follows.
Section \ref{sec:background} recalls the necessary background from \cite{CNS2014}.
In particular, the notion of bi-non-crossing partitions, their lateral refinements, the incident algebra for bi-non-crossing partitions, and the structure of the universal polynomials for moments of bi-free pairs of faces will be reviewed.

Section \ref{sec:bifreefamilieswithamalgamation} introduces the setting for bi-free probability with amalgamation from \cite{V2013-1}.
We define the notion of a $B$-$B$-non-commutative probability space $(\mathcal{A}, E, \varepsilon)$ (Definition \ref{defn:BBncps}) and demonstrate a representation of $\mathcal{A}$ as linear operators on a $B$-$B$-bimodule (see Theorem \ref{thm:representingbbncps}).
In addition, the notion of bi-free pairs of $B$-faces is reintroduced (see Definition \ref{defn:pairofBfaces}).  

Section \ref{sec:OperatorValuedBiMultiplicativeFunctions} introduces the notion of an operator-valued bi-multiplicative function (see Definition \ref{defnbimultiplicative}).
Such functions are extensions of multiplicative functions (see \cite{NSS2002}*{Section 2} or \cite{S1998}*{Section 2}) to the bi-free setting but have natural descriptions via multiplicative functions (see Remark \ref{remonbimultiplicative}).

Section \ref{sec:verifyingrecursivedefinitionfromuniversalpolynomialshasdesiredproperties} defines certain terms $E_\pi(T_1, \ldots, T_n)$ (see Definition \ref{defn:recursivedefinitionofEpi}) which appear when actions of pairs of $B$-faces are examined and give rise to an operator-valued bi-multiplicative moment function.
Unfortunately, Section \ref{sec:verifyingrecursivedefinitionfromuniversalpolynomialshasdesiredproperties} is fairly technical due to the lack of an analogue of centring techniques from free-probability.

Section \ref{sec:operatorvaluedbifreecumulants} defines the operator-valued bi-free cumulants (see Definition \ref{def:kappa}) as a convolution of the M\"{o}bius function for bi-non-crossing partitions with $E_\pi(T_1,\ldots, T_n)$ and demonstrates that they are bi-multiplicative (see Corollary \ref{cor:cumulantsarebimultiplicative}).
In addition, we show that the operator-valued bi-free cumulants posess a certain property analogous to the property of operator-valued free cumulants demonstrated in \cite{S1998}*{Section 3.2} (see Section \ref{subsec:bimomentandbicumulantfunctions}) and vanish when a left or right $B$-operator is input (see Proposition \ref{prop:inputaL_borR_bandyougetzerocumulants}).

Section \ref{sec:universalmomentpolysforbifreewithamalgamation} demonstrates through Theorem \ref{thm:bifreeequivalenttouniversalpolys} that a family of pairs of $B$-faces are bi-free with amalgamation over $B$ if and only if certain universal moment polynomials involving $E_\pi(T_1, \ldots, T_n)$ are satisfied.
The main result of this paper, Theorem \ref{thmequivalenceofbifreeandcombintoriallybifree}, then follows immediately in Section \ref{sec:additivity}: a family of pairs of $B$-faces are bi-free over $B$ if and only if their mixed operator-valued bi-free cumulants vanish.

Section \ref{sec:MultiplicativeConvolution} demonstrates how operator-valued bi-free cumulants involving products of operators may be computed.
Also, Proposition \ref{prop:multiplcative-convolution} (which holds only in the scalar setting) generalizes \cite{CNS2014}*{Theorem 5.2.1} from the multiplicative convolution for bi-free two-faced families with singletons in each left and each right face to an arbitrary number of operators in each face.

Finally, Section \ref{sec:howtogetexamplesofbifreefamilies} provides additional methods for constructing bi-free pairs of $B$-faces by showing that conjugating a pair of $B$-faces by a $B$-valued Haar bi-unitary produces a pair of $B$-faces bi-free from the original pair, and that the bi-freeness of pairs of $B$-faces where all left faces commute with all right faces is intrinsically related to the freeness of just the left or just the right faces.


\section{Background on Bi-Non-Crossing Partitions}
\label{sec:background}
We begin by reviewing many of the results from \cite{CNS2014}, which we will extend to the amalgamated setting.
Throughout this section, let $n \in \N$, $\chi : \set{1, \ldots, n} \to \set{\ell, r}$, and $\e : \{1,\ldots, n\} \to K$ for some fixed set $K$.

\subsection{Bi-Non-Crossing Partitions and Diagrams}
\label{sec:BiNonCrossingPartitionsAndDiagrams}
Recall that $\chi$ induces a permutation $s_\chi \in S_n$ corresponding to reading the left nodes in increasing order, followed by the right nodes in decreasing order: if $\chi^{-1}(\{\ell\}) = \set{i_1<\cdots<i_p}$ and $\chi^{-1}(\{r\}) = \set{i_{p+1} > \cdots > i_n}$, we set $s_\chi(k) = i_k$.
Recall also that the set of partitions on $n$ elements, $\mc P(n)$, is partially ordered by refinement: for $\pi, \sigma \in \mc P(n)$, we have $\pi \leq \sigma$ if and only if every block of $\pi$ is contained in a single block of $\sigma$.
Finally, let $\abs{\sigma}$ denote the number of blocks in $\sigma$.

\begin{defn}
A partition $\pi \in \mc{P}(n)$ is said to be \emph{bi-non-crossing} with respect to $\chi$ if the partition  $s_\chi^{-1}\cdot \pi \in NC(n)$ -- that is, the partition formed by applying $s_\chi^{-1}$ to the blocks of $\pi$ -- is non-crossing.
Equivalently, $\pi$ is bi-non-crossing if whenever there are blocks $U, V \in \pi$ with $u_1, u_2 \in U$ and $v_1, v_2 \in V$ such that $s_\chi^{-1}(u_1) < s_\chi^{-1}(v_1) < s_\chi^{-1}(u_2) < s_\chi^{-1}(v_2)$, we have $U = V$.
The set of bi-non-crossing partitions with respect to $\chi$ is denoted $BNC(\chi)$, and inherits a lattice structure from $\mc P(n)$.
We will use $0_{\chi}$ and $1_{\chi}$ to denote the minimal and maximal elements of $BNC(\chi)$, respectively.
\end{defn}

To each partition $\pi \in BNC(\chi)$ we can associate a ``bi-non-crossing diagram'' by placing nodes along two vertical lines, labelled $1$ to $n$ from top to bottom, such that the nodes on the left line correspond to those values for which $\chi(k) = \ell$ (similarly for the right), and connecting those nodes which are in the same block of $\pi$ in a non-crossing manner.

\begin{exam}
If $\chi^{-1}(\set l)=\{1,2,4\}$, $\chi^{-1}(\set r)=\{3,5\}$, and
	\begin{align*}
		\pi=\left\{\vphantom{\sum}\{1,3\}, \{2,4,5\} \right\}= s_\chi\cdot \left\{\vphantom{\sum} \{1,5\}, \{2,3,4\} \right\},
	\end{align*}
then the bi-non-crossing diagram associated to $\pi$ is
	\begin{align*}
	\begin{tikzpicture}[baseline]
	\node[ left] at (-.5, 2) {1};
	\draw[fill=black] (-.5,2) circle (0.05);
	\node[left] at (-.5, 1.5) {2};
	\draw[fill=black] (-.5,1.5) circle (0.05);
	\node[right] at (.5, 1) {3};
	\draw[fill=black] (.5,1) circle (0.05);
	\node[left] at (-.5, .5) {4};
	\draw[fill=black] (-.5,.5) circle (0.05);
	\node[right] at (.5,0) {5};
	\draw[fill=black] (.5,0) circle (0.05);
	\draw[thick] (-.5,2) -- (0,2) -- (0, 1) -- (.5,1);
	\draw[thick] (-.5,1.5) -- (-.25,1.5) -- (-.25, 0) -- (.5,0);
	\draw[thick] (-.5,.5) -- (-.25,.5);
	\draw[thick, dashed] (-.5,2.25) -- (-.5,-.25) -- (.5,-.25) -- (.5,2.25);
	\end{tikzpicture}.
	\end{align*}
\end{exam}

\begin{defn}
The set $LR(\chi, \e)$ of \emph{shaded LR diagrams} is defined recursively.
If $n = 0$, $LR(\chi, \e)$ consists of an empty diagram.
If $n > 0$, let $\bar\chi(k) = \chi(k-1)$ for $k \in \set{2, \ldots, n}$ and $\bar\e(k) = \e(k-1)$ for $k \in \set{2, \ldots, n}$.
Each $D \in LR(\bar\chi, \bar\e)$ then corresponds to two (unique) elements of $LR(\chi, \e)$ via the following process:
\begin{itemize}
\item First, add to the top of $D$ a node on the side corresponding to $\chi(1)$, shaded by $\e(1)$.
\item If a string of shade $\e(1)$ extends from the top of $D$, connect it to the added node.
\item Then, choose to either extend a string from the added node to the top of the new diagram or not, and extend any other strings from $D$ to the top of the new diagram.
\end{itemize}
We will denote the subset of $LR(\chi, \epsilon)$ consisting of diagrams with exactly $k$ strings that reach the top by $LR_k(\chi, \e)$.

We refer to the vertical portions of strings in $LR$ and bi-non-crossing diagrams as \emph{spines}, and horizontal segments connecting nodes to spines as \emph{ribs}.
\end{defn}

Note that any element of $LR_0(\chi, \e)$ corresponds to a partition $\pi \in BNC(\chi)$ by taking as blocks in $\pi$ each set of nodes joined by strings in the diagram.
The non-crossing diagram corresponding to $\pi$ will then match the original $LR$-diagram with the shades removed.

\begin{ex}\label{2_node_ex}
Consider $\chi=(\ell,r)$ and $\epsilon=(','')$.
Then $LR(\chi,\epsilon)$ consists of the following diagrams:
	\begin{align*}
		\begin{tikzpicture}[baseline]
			\node[left] at (-.75,.25) {$D_1=$};
			\draw[thick,dashed] (-.5,.75) -- (-.5,-.25) -- (.5,-.25) -- (.5,.75);
			\node[left] at (-.5,.5) {1};
			\draw[red,fill=red] (-.5,.5) circle (0.05);
			\node[right] at (.5,0) {2};
			\draw[ggreen,fill=ggreen] (.5,0) circle (0.05);
		\end{tikzpicture}
		\qquad\qquad
		\begin{tikzpicture}[baseline]
			\node[left] at (-.75,.25) {$D_2=$};
			\draw[thick,dashed] (-.5,.75) -- (-.5,-.25) -- (.5,-.25) -- (.5,.75);
			\node[left] at (-.5,.5) {1};
			\draw[red,fill=red] (-.5,.5) circle (0.05);
			\draw[thick,red] (-.5,.5) -- (0,.5) -- (0, .75);
			\node[right] at (.5,0) {2};
			\draw[ggreen,fill=ggreen] (.5,0) circle (0.05);
		\end{tikzpicture}
		\qquad\qquad
		\begin{tikzpicture}[baseline]
			\node[left] at (-.75,.25) {$D_3=$};
			\draw[thick,dashed] (-.5,.75) -- (-.5,-.25) -- (.5,-.25) -- (.5,.75);
			\node[left] at (-.5,.5) {1};
			\draw[red,fill=red] (-.5,.5) circle (0.05);
			\node[right] at (.5,0) {2};
			\draw[ggreen,fill=ggreen] (.5,0) circle (0.05);
			\draw[thick,ggreen] (.5,0) -- (0,0) -- (0,.75);
		\end{tikzpicture}
		\qquad\qquad
		\begin{tikzpicture}[baseline]
			\node[left] at (-.75,.25) {$D_4=$};
			\draw[thick,dashed] (-.5,.75) -- (-.5,-.25) -- (.5,-.25) -- (.5,.75);
			\node[left] at (-.5,.5) {1};
			\draw[red,fill=red] (-.5,.5) circle (0.05);
			\draw[thick,red] (-.5,.5) -- (-.1, .5)-- (-.1,.75);
			\node[right] at (.5,0) {2};
			\draw[ggreen,fill=ggreen] (.5,0) circle (0.05);
			\draw[thick,ggreen] (.5,0) -- (.1,0) -- (.1,.75);
		\end{tikzpicture}
	\end{align*}
Here $LR_0(\chi,\epsilon) = \{D_1\}$.
\end{ex}

\begin{ex}\label{3_node_ex}
For a slightly more robust example we consider $\chi=(r,\ell,r)$ and $\epsilon=(',','')$. Then $LR(\chi,\epsilon)$ consists of the following diagrams:
	\begin{align*}
		\begin{tikzpicture}[baseline]
			\node[left] at (-.75,.5) {$E_1=$};
			\draw[thick,dashed] (-.5,1.25) -- (-.5,-.25) -- (.5,-.25) -- (.5,1.25);
			\draw[red,fill=red] (.5, 1) circle (0.05);
			\draw[red,fill=red] (-.5,.5) circle (0.05);
			\draw[ggreen,fill=ggreen] (.5,0) circle (0.05);
			\node[right] at (.5,1) {1};
			\node[left] at (-.5,.5) {2};
			\node[right] at (.5,0) {3};
		\end{tikzpicture}
		\qquad\qquad
		\begin{tikzpicture}[baseline]
			\node[left] at (-.75,.5) {$E_2=$};
			\draw[thick,dashed] (-.5,1.25) -- (-.5,-.25) -- (.5,-.25) -- (.5,1.25);
			\draw[red,fill=red] (.5, 1) circle (0.05);
			\draw[thick,red] (.5,1) -- (0,1) -- (0,1.25);
			\draw[red,fill=red] (-.5,.5) circle (0.05);
			\draw[ggreen,fill=ggreen] (.5,0) circle (0.05);
			\node[right] at (.5,1) {1};
			\node[left] at (-.5,.5) {2};
			\node[right] at (.5,0) {3};
		\end{tikzpicture}
		\qquad\qquad		
		\begin{tikzpicture}[baseline]
			\node[left] at (-.75,.5) {$E_3=$};
			\draw[thick,dashed] (-.5,1.25) -- (-.5,-.25) -- (.5,-.25) -- (.5,1.25);
			\draw[red,fill=red] (.5, 1) circle (0.05);
			\draw[thick,red] (.5,1) -- (0,1);
			\draw[red,fill=red] (-.5,.5) circle (0.05);
			\draw[thick,red] (-.5,.5)--(0,.5)--(0,1);
			\draw[ggreen,fill=ggreen] (.5,0) circle (0.05);
			\node[right] at (.5,1) {1};
			\node[left] at (-.5,.5) {2};
			\node[right] at (.5,0) {3};
		\end{tikzpicture}
		\qquad\qquad
		\begin{tikzpicture}[baseline]
			\node[left] at (-.75,.5) {$E_4=$};
			\draw[thick,dashed] (-.5,1.25) -- (-.5,-.25) -- (.5,-.25) -- (.5,1.25);
			\draw[red,fill=red] (.5, 1) circle (0.05);
			\draw[thick,red] (.5,1) -- (0,1) -- (0,1.25);
			\draw[red,fill=red] (-.5,.5) circle (0.05);
			\draw[thick,red] (-.5,.5)--(0,.5)--(0,1);
			\draw[ggreen,fill=ggreen] (.5,0) circle (0.05);
			\node[right] at (.5,1) {1};
			\node[left] at (-.5,.5) {2};
			\node[right] at (.5,0) {3};
		\end{tikzpicture}
		\\
		\\
		\begin{tikzpicture}[baseline]
			\node[left] at (-.75,.5) {$E_5=$};
			\draw[thick,dashed] (-.5,1.25) -- (-.5,-.25) -- (.5,-.25) -- (.5,1.25);
			\draw[red,fill=red] (.5, 1) circle (0.05);
			\draw[red,fill=red] (-.5,.5) circle (0.05);
			\draw[ggreen,fill=ggreen] (.5,0) circle (0.05);
			\draw[thick, ggreen] (.5,0) -- (0,0) -- (0, 1.25);
			\node[right] at (.5,1) {1};
			\node[left] at (-.5,.5) {2};
			\node[right] at (.5,0) {3};
		\end{tikzpicture}
		\qquad\qquad
		\begin{tikzpicture}[baseline]
			\node[left] at (-.75,.5) {$E_6=$};
			\draw[thick,dashed] (-.5,1.25) -- (-.5,-.25) -- (.5,-.25) -- (.5,1.25);
			\draw[red,fill=red] (.5, 1) circle (0.05);
			\draw[thick,red] (.5,1) -- (.25, 1) -- (.25,1.25);
			\draw[red,fill=red] (-.5,.5) circle (0.05);
			\draw[ggreen,fill=ggreen] (.5,0) circle (0.05);
			\draw[thick, ggreen] (.5,0) -- (0,0) -- (0, 1.25);
			\node[right] at (.5,1) {1};
			\node[left] at (-.5,.5) {2};
			\node[right] at (.5,0) {3};
		\end{tikzpicture}
		\qquad\qquad
		\begin{tikzpicture}[baseline]
			\node[left] at (-.75,.5) {$E_7=$};
			\draw[thick,dashed] (-.5,1.25) -- (-.5,-.25) -- (.5,-.25) -- (.5,1.25);
			\draw[red,fill=red] (.5, 1) circle (0.05);
			\draw[red,fill=red] (-.5,.5) circle (0.05);
			\draw[thick,red] (-.5,.5) -- (-.25,.5) -- (-.25,1.25);
			\draw[ggreen,fill=ggreen] (.5,0) circle (0.05);
			\draw[thick, ggreen] (.5,0) -- (0,0) -- (0, 1.25);
			\node[right] at (.5,1) {1};
			\node[left] at (-.5,.5) {2};
			\node[right] at (.5,0) {3};
		\end{tikzpicture}
		\qquad\qquad
		\begin{tikzpicture}[baseline]
			\node[left] at (-.75,.5) {$E_8=$};
			\draw[thick,dashed] (-.5,1.25) -- (-.5,-.25) -- (.5,-.25) -- (.5,1.25);
			\draw[red,fill=red] (.5, 1) circle (0.05);
			\draw[thick,red] (.5,1) -- (.25,1) -- (.25,1.25);
			\draw[red,fill=red] (-.5,.5) circle (0.05);
			\draw[thick,red] (-.5,.5) -- (-.25,.5) -- (-.25,1.25);
			\draw[ggreen,fill=ggreen] (.5,0) circle (0.05);
			\draw[thick, ggreen] (.5,0) -- (0,0) -- (0, 1.25);
			\node[right] at (.5,1) {1};
			\node[left] at (-.5,.5) {2};
			\node[right] at (.5,0) {3};
		\end{tikzpicture}
	\end{align*}
In this case, $LR_0(\chi, \e) = \set{E_1, E_3}$.
We also note that $D_k$ from Example \ref{2_node_ex} gives rise to diagrams $E_{2k-1}$ and $E_{2k}$.
\end{ex}

These shaded LR diagrams are useful because the choices made when constructing them correspond to the choices made in expanding a mixed moment of bi-free random variables in terms of pure moments.
In doing so, we will often consider a map $\alpha : \set{1, \ldots, n} \to I \sqcup J$ where $I$ and $J$ are index sets of left and right random variables, respectively.
We will want to consider the bi-non-crossing diagrams corresponding to such a situation, and so, as useful shorthand, we will define $\chi_\alpha : \set{1, \ldots, n} \to \set{\ell, r}$ by $\chi_\alpha^{-1}\paren{\set\ell} = \alpha^{-1}\paren{I}$, and take $BNC(\alpha) = BNC(\chi_\alpha)$.

\subsection{Combinatorics on Bi-Non-Crossing Partitions}
Recall that the incident algebra $IA(BNC)$ on the set of bi-non-crossing partitions is the algebra of functions of the form
$$f : \bigcup_{n\geq1}\bigcup_{\chi : \set{1, \ldots, n}\to\set{\ell, r}}BNC(\chi)\times BNC(\chi) \longrightarrow \C$$
such that $f(\pi, \sigma) = 0$ unless $\pi$ is a refinement of $\sigma$, with point-wise addition, and multiplication given by convolution: if $f, g \in IA(BNC)$ and $\pi, \sigma \in BNC(\chi)$,
$$(f* g)(\pi, \sigma) = \sum_{\pi\leq\rho\leq\sigma}f(\pi,\rho)g(\rho,\sigma).$$
As was shown in \cite{CNS2014}, we can associate to each interval in $BNC(\chi)$ a product of full intervals.
We say a function $f \in IA(BNC)$ is \emph{multiplicative} if whenever $\sq{\pi, \sigma}$ corresponds to $\prod_{j=1}^k BNC(\beta_k)$, we have
$$f(\pi, \sigma) = \prod_{j=1}^k f(0_{\beta_k}, 1_{\beta_k}).$$
We note that a multiplicative function $f$ is completely determined by the values it takes on each $\paren{0_\chi,1_\chi}$.

The multiplicative identity in $IA(BNC)$ is given by the delta function
$$\delta_{BNC}(\pi, \sigma) = \left\{\begin{array}{ll}1&\text{ if } \pi = \sigma\\0&\text{ otherwise}\end{array}\right..$$
We then define the zeta function on $IA(BNC)$ by
$$\zeta_{BNC}(\pi, \sigma) = \left\{\begin{array}{ll}1&\text{ if } \pi \leq \sigma\\0&\text{ otherwise}\end{array}\right.,$$
and the M\"obius function $\mu_{BNC}$ as its multiplicative inverse:
$$\mu_{BNC}*\zeta_{BNC} = \delta_{BNC} = \zeta_{BNC}*\mu_{BNC}.$$
It can be shown that these three functions are multiplicative and $\mu_{BNC}(\pi, \sigma) = \mu_{NC}(s^{-1}_\chi \cdot \pi, s^{-1}_\chi \cdot \sigma)$.

Suppose that $T_1, \ldots, T_n$ are elements of a non-commutative probability space $\paren{\A, \varphi}$ and $\pi \in BNC(\chi)$ with blocks $V_t = \set{k_{t, 1}<\cdots<k_{t,m_t}}$, $t \in \set{1, \ldots, j}$.
We set the notation
$$\varphi_\pi(T_1, \ldots, T_n) := \prod_{t=1}^j \varphi\paren{T_{k_{t, 1}}\cdots T_{k_{t, m_t}}}$$
and
$$\kappa_\pi(T_1, \ldots, T_n) := \sum_{\substack{\sigma\in BNC(\chi)\\\sigma\leq\pi}}\varphi_\sigma(T_1, \ldots, T_n)\mu_{BNC}(\sigma, \pi).$$
It was shown in \cite{CNS2014} that $\kappa_{1_\chi}(T_1,\ldots, T_n)$ are the \emph{$(\ell, r)$-cumulants} as defined in \cite{MN2013}*{Definition 5.2}.

\subsection{Bi-free Probability}
We now recall several definitions relating to bi-free probability from \cite{V2013-1}.
\begin{defn}
Let $(\A, \varphi)$ be a non-commutative probability space: that is, let $\A$ be a unital algebra and $\varphi : \A \to \mathbb{C}$ be unital and linear.  A \emph{pair of faces in $\A$} is a pair $(C, D)$ of unital subalgebras of $\A$.  We will call $C$ the \emph{left face} and $D$ the \emph{right face}.
\end{defn}

\begin{defn}
A family $\set{ \left(  C_k, D_k\right)}_{k \in K}$ of pairs of faces in a non-commutative probability space $(\A, \varphi)$ is said to be \emph{bi-freely independent} if there exists a family of vector spaces with specified vector states $\{(\X_k, \mathring{\X}_k, \xi_k)\}_{k \in K}$ and unital homomorphisms 
\[
l_k : C_k \to \mathcal{L}(\X_k)\quad\mbox{ and }\quad r_k : D_k \to \mathcal{L}(\X_k)
\]
such that the joint distribution of the family with respect to $\varphi$ is equal to the joint distribution with respect to the vacuum state on the representation on $\ast_{k \in K} (\X_k, \mathring{\X}_k, \xi_k)$.
\end{defn}

\begin{rem}
It is sometimes useful to think of $\e : \set{1, \ldots, n} \to K$ not as a map but rather as the partition $\e^{-1}(K)$ it induces.
Thus if we write $\sigma \leq \e$ for some partition $\sigma$, we mean that $\sigma$ is a refinement of $\e^{-1}(K)$ and so $\e$ is constant on each block in $\sigma$.
\end{rem}

\begin{defn}
Suppose $\pi, \sigma \in BNC(\chi)$ are such that $\pi \leq \sigma$.  We say $\pi$ is a \emph{lateral refinement} of $\sigma$ and write $\pi \lrleq \sigma$ if the bi-non-crossing diagram for $\pi$ can be obtained from that of $\sigma$ by making lateral ``cuts'' along the spines of blocks of $\pi$ between their ribs; that is, by removing some portion of the vertical lines and then any horizontal lines that are no longer attached to a vertical line in the diagram from $\sigma$.
\end{defn}

In \cite{CNS2014}*{Proposition 4.2.1}, we established the following combinatorial result which was crucial in connecting bi-freeness with cumulants.
It will once again prove useful in this paper.
\begin{thm}
\label{thm:twosums}
Let $\chi : \{1, \ldots, n\} \to \{\ell, r\}$ and $\epsilon : \{1,\ldots, n\} \to K$.  Then for every $\pi \in BNC(\chi)$ such that $\pi \leq \epsilon$,
\[
\sum_{\substack{\sigma \in LR_0(\chi, \epsilon) \\ \sigma \lrgeq \pi}} (-1)^{|\pi| - |\sigma|} = \sum_{\substack{\sigma \in BNC(\chi) \\ \pi \leq \sigma \leq \epsilon}} \mu_{BNC}(\pi, \sigma).
\]
\end{thm}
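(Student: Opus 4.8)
The plan is to push both sides into the interval $[\pi,1_\chi]$ of $BNC(\chi)$ and to recognize each as a signed chain count, so that the equality becomes a single cancellation. First I would pin down the left index set. An element of $LR_0(\chi,\e)$ is the same data as a partition $\sigma\in BNC(\chi)$ with $\sigma\le\e$ that is actually \emph{realizable} by the recursive construction; tracing the forcing rule (a pending string of a given shade must absorb the next node of that shade) shows that $\sigma$ occurs iff each of its blocks is an interval in the vertical order of its own shade, i.e. no spine passes a same-shade node of another block. Call such $\sigma$ \emph{admissible}. The condition $\sigma\lrgeq\pi$ then says precisely that $\sigma$ is a lateral coarsening of $\pi$, so the left-hand index set is $L := \set{\sigma\in BNC(\chi) : \pi\lrleq\sigma,\ \sigma\ \mathrm{admissible}}$.

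Next I would identify the signs on the left with Möbius values of the lateral poset. Since a family of simultaneously healable lateral cuts may be healed in any subset independently while remaining bi-non-crossing, the sub-poset of lateral coarsenings below any $\sigma$ is Boolean of rank $\abs{\pi}-\abs{\sigma}$; hence $(-1)^{\abs{\pi}-\abs{\sigma}}=\mu_{L}(\pi,\sigma)$, and Philip Hall's theorem rewrites the left-hand side as $\sum(-1)^{k}$ over strictly increasing chains $\pi=\rho_0<\cdots<\rho_k$ all of whose terms are lateral coarsenings of $\pi$. Applying Philip Hall directly on the right, $\sum_{\pi\le\sigma\le\e}\mu_{BNC}(\pi,\sigma)=\sum(-1)^{k}$ over \emph{all} chains $\pi=\rho_0<\cdots<\rho_k$ in $BNC(\chi)$ with $\rho_k\le\e$ (so automatically every $\rho_i\le\e$).

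The two chain sums share every chain whose terms are all lateral coarsenings of $\pi$, so the theorem reduces to the vanishing of the signed count of the remaining ``non-lateral'' chains. For such a chain let $i^*$ be the least index with $\rho_{i^*}$ not a lateral coarsening of $\pi$; the step $\rho_{i^*-1}<\rho_{i^*}$ then merges, around a nested same-shade node, two blocks that are not vertically consecutive. I would fix a linear order on such nested merges and use the first one to insert or delete a canonical intermediate partition, toggling the chain length by one and reversing the sign while preserving both $\rho_k\le\e$ and the value of $i^*$; this is the intended sign-reversing involution.

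The main obstacle is exactly this involution. A lateral coarsening $\sigma$ need not sit atop a Boolean interval of $BNC(\chi)$: when same-shade nodes are nested rather than stacked (for instance a block $\set{1,4}$ lying over a same-shade singleton $\set{2}$), one finds $\mu_{BNC}(\pi,\sigma)\neq(-1)^{\abs{\pi}-\abs{\sigma}}$, so no termwise matching is possible and the equality is genuinely global. Checking that the pivot is canonical and yields a fixed-point-free, sign-reversing involution respecting $\rho_k\le\e$ — including the degenerate case $\pi\notin LR_0(\chi,\e)$, where $L=\emptyset$ and every chain must cancel — is the technical heart. As a fallback I would instead induct on $n$ via the recursive construction of $LR(\chi,\e)$, peeling the top node and matching the extend/do-not-extend dichotomy against its effect on $[\pi,\e]$; I expect the same nested-versus-stacked difficulty to resurface there as the bookkeeping for the block of the top node.
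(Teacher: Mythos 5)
Your opening step---identifying $LR_0(\chi,\epsilon)$ with the ``admissible'' partitions, i.e.\ those $\sigma\in BNC(\chi)$, $\sigma\leq\epsilon$, each of whose blocks is an interval inside its own shade class---is false, and the whole reduction collapses with it. The forcing rule in the construction of $LR(\chi,\epsilon)$ is positional, not merely shade-based: these diagrams are planar and encode the action of left (resp.\ right) operators on the \emph{left-most} (resp.\ \emph{right-most}) tensor factor of the free product space, so a newly added left node can be captured only by the left-most string reaching the top (similarly a right node only by the right-most string). Consequently a spine may very well pass same-shade nodes of other blocks, provided those nodes are shielded from it by a nested spine of a different shade. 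Concretely, take $n=5$, $\chi\equiv\ell$, $\epsilon=(1,2,1,2,1)$, and $\sigma=\{\{1,5\},\{2,4\},\{3\}\}$: node $4$ starts a string nested inside (to the left of) the string of node $5$, so when node $3$ arrives the left-most string has shade $2\neq 1$ and node $3$ is \emph{not} forced to join the block of $5$; one checks $\sigma\in LR_0(\chi,\epsilon)$, and indeed it must be, since it records the expansion term $\varphi(T_1T_5)\varphi(T_2T_4)\varphi(T_3)$. Your criterion discards it because the spine of $\{1,5\}$ passes the same-shade node $3$. Now take $\pi=\sigma$: the right-hand side is $\mu_{BNC}(\pi,\pi)=1$ (no proper coarsening of $\pi$ is both bi-non-crossing and $\leq\epsilon$), while with your index set $L$ the left-hand side would be $0$, since $\pi$ is the only candidate and you exclude it. So the statement you reduce to is false, and no sign-reversing involution can establish it. (Note also that under the purely shade-based reading of the forcing rule, $LR_0$ would contain diagrams with crossing underlying partitions, e.g.\ $\{\{1,3\},\{2,4\}\}$ for $\epsilon=(1,2,1,2)$ with $\chi\equiv\ell$, and the identity would again fail for $\pi=0_\chi$; the planarity of the diagrams is essential, not cosmetic.)

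Even granting a corrected description of $LR_0(\chi,\epsilon)$, the proposal is not a proof: its engine---the canonical, fixed-point-free, sign-reversing involution on the ``non-lateral'' chains---is never constructed, only gestured at, and you yourself flag it as the unresolved technical heart. Two further remarks. First, your observation that the signs $(-1)^{|\pi|-|\sigma|}$ on the left are genuinely different from $\mu_{BNC}(\pi,\sigma)$ in nested configurations is correct and is exactly why the identity is global rather than termwise; but that makes the missing involution the entire content of the theorem, not a detail. Second, this paper does not actually prove the statement: it is imported from \cite{CNS2014}*{Proposition 4.2.1}, so there is no internal proof to match against; your fallback plan (induction on $n$ by peeling the top node through the recursive construction of $LR(\chi,\epsilon)$) is much closer in spirit to how such diagrammatic statements are handled there, but it is undeveloped here and, like everything else, requires the positional forcing rule to be stated correctly before the bookkeeping can even begin.
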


We desire to extend the following result, \cite{CNS2014}*{Theorem 4.3.1}, to the amalgamated setting.
\begin{thm}
Let $(\mathcal{A}, \varphi)$ be a non-commutative probability space and let $\{(C_k, D_k)\}_{k \in K}$ be a family of pairs of faces from $\A$.  Then $\{(C_k, D_k)\}_{k \in K}$ are bi-free if and only if for all $\chi : \{1,\ldots, n\}\to\set{\ell,r}$, $\epsilon : \{1,\ldots, n\} \to K$ non-constant, and 
\[
T_k \in \left\{
\begin{array}{ll}
C_{\epsilon(k)} & \mbox{if } \chi(k) = \ell  \\
D_{\epsilon(k)} & \mbox{if } \chi(k) = r
\end{array} \right.,
\]
we have 
\[
\kappa_{1_\chi}(T_1, \ldots, T_n) = 0.
\]
Equivalently, for all $\chi : \{1,\ldots, n\}\to\set{\ell,r}$, for all $\epsilon : \{1,\ldots, n\} \to K$, and for all $T_k$ as defined above,
\[
\varphi(T_1 \cdots T_n) = \sum_{\pi \in BNC(\chi)} \left[ \sum_{\substack{\sigma \in BNC(\chi) \\ \pi \leq \sigma \leq \epsilon}} \mu_{BNC}(\pi, \sigma) \right]\varphi_\pi(T_1,\ldots, T_n).
\]
\end{thm}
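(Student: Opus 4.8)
The plan is to prove the equivalence in three stages: first reconcile the two displayed conditions by pure M\"obius calculus (this stage never mentions bi-freeness), then establish the moment formula for a genuinely bi-free family using the free product representation together with Theorem \ref{thm:twosums}, and finally run a universality argument for the converse. I begin with the reconciliation. By the definition of $\kappa_\pi$ and M\"obius inversion one has $\varphi(T_1\cdots T_n) = \varphi_{1_\chi}(T_1,\ldots,T_n) = \sum_{\sigma\in BNC(\chi)}\kappa_\sigma(T_1,\ldots,T_n)$, whereas reindexing the right-hand side of the moment formula gives
\[
\sum_{\pi\in BNC(\chi)}\varphi_\pi\!\!\sum_{\pi\leq\sigma\leq\epsilon}\!\!\mu_{BNC}(\pi,\sigma) \;=\; \sum_{\sigma\leq\epsilon}\sum_{\pi\leq\sigma}\varphi_\pi\,\mu_{BNC}(\pi,\sigma)\;=\;\sum_{\sigma\leq\epsilon}\kappa_\sigma .
\]
Subtracting, the moment formula is equivalent to $\sum_{\sigma\not\leq\epsilon}\kappa_\sigma(T_1,\ldots,T_n)=0$ for every $\chi$ and $\epsilon$. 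Since the scalar cumulant function is multiplicative (being the M\"obius convolution of the multiplicative moment function), $\kappa_\sigma$ factors as a product over the blocks of $\sigma$, each factor a cumulant $\kappa_{1}$ of the variables of that block. If all mixed cumulants vanish, then every $\sigma\not\leq\epsilon$ has a block on which $\epsilon$ is non-constant, forcing $\kappa_\sigma=0$; conversely, an induction on $n$ (a proper refinement $\sigma\neq 1_\chi$ splits into blocks of size $<n$) isolates the term $\sigma=1_\chi$ and yields $\kappa_{1_\chi}=0$ for non-constant $\epsilon$. The constant-$\epsilon$ case of the moment formula is a tautology, matching the restriction to non-constant $\epsilon$ in the cumulant condition.

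Next I would show that a bi-free family satisfies the moment formula. As bi-freeness means the joint distribution agrees with that of the vacuum state on a free product representation $\ast_{k}(\X_k,\mathring{\X}_k,\xi_k)$, it suffices to verify the formula inside that operator model. Expanding $\langle T_1\cdots T_n\,\xi,\xi\rangle$ by letting each left or right operator act on the free product space, the vacuum expectation is supported only on configurations in which every created vector is eventually annihilated within the same $\X_k$; these configurations are catalogued precisely by the shaded diagrams of $LR_0(\chi,\epsilon)$. Writing each $T_k=\varphi(T_k)\,1+\mathring{T}_k$ and expanding introduces both the signs $(-1)^{|\pi|-|\sigma|}$ and the lateral-refinement order, so that collecting terms according to the underlying bi-non-crossing partition $\pi\leq\epsilon$ produces
\[
\varphi(T_1\cdots T_n)\;=\;\sum_{\pi\leq\epsilon}\varphi_\pi(T_1,\ldots,T_n)\sum_{\substack{\sigma\in LR_0(\chi,\epsilon)\\ \sigma\lrgeq\pi}}(-1)^{|\pi|-|\sigma|}.
\]

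At this point Theorem \ref{thm:twosums} applies verbatim to each inner coefficient, replacing $\sum_{\sigma\in LR_0(\chi,\epsilon),\,\sigma\lrgeq\pi}(-1)^{|\pi|-|\sigma|}$ by $\sum_{\pi\leq\sigma\leq\epsilon}\mu_{BNC}(\pi,\sigma)$; since this bracket vanishes whenever $\pi\not\leq\epsilon$, the outer sum may be extended to all $\pi\in BNC(\chi)$, giving exactly the displayed moment formula and hence, by the first stage, the vanishing of mixed cumulants. For the converse I would argue by universality. The moment formula expresses every joint moment as one fixed universal polynomial in the moments of words drawn from a single pair of faces $(C_k,D_k)$, because the surviving terms have $\pi\leq\epsilon$, so each block of $\pi$ is $\epsilon$-monochromatic and $\varphi_\pi$ is a product of such internal moments. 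Constructing the free product representation of the individual distributions of the $(C_k,D_k)$ and invoking the forward direction, that model satisfies the same universal formula with the same marginal inputs; therefore its joint distribution coincides with that of the original family, which is precisely the statement that $\{(C_k,D_k)\}_{k\in K}$ is bi-free.

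The main obstacle is the operator-model expansion in the second paragraph: one must show that, after centering, the vacuum expectation reorganizes \emph{exactly} into the diagrams of $LR_0(\chi,\epsilon)$ that are lateral coarsenings of a fixed $\pi$, carrying precisely the signs $(-1)^{|\pi|-|\sigma|}$. This is where the left/right creation--annihilation dynamics and the reduced-word bookkeeping on the free product must be controlled, and it is the reason the lateral-refinement and shaded-diagram formalism was developed in the first place; once it is in hand, Theorem \ref{thm:twosums} performs the remaining combinatorial conversion mechanically, and the two M\"obius identities of the first paragraph finish the argument.
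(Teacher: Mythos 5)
Your overall architecture is exactly the one used for this theorem (which the paper quotes from \cite{CNS2014} and then generalizes): (1) a M\"obius/multiplicativity calculation showing that the moment formula for all $\epsilon$ is equivalent to $\sum_{\sigma \not\leq \epsilon} \kappa_\sigma = 0$, hence, by induction on $n$, to the vanishing of mixed cumulants; (2) an expansion of the vacuum expectation on the free product space organized by $LR_0(\chi,\epsilon)$ diagrams and their lateral refinements, converted by Theorem \ref{thm:twosums} into the M\"obius-weighted sum; (3) a universality argument for the converse, representing each pair on a pointed vector space so that the forward direction forces the joint distributions to agree. Stages (1) and (3) are carried out correctly and completely, and they match the corresponding steps in the paper's amalgamated versions (the inductive argument in Theorem \ref{thmequivalenceofbifreeandcombintoriallybifree} and the converse of Theorem \ref{thm:bifreeequivalenttouniversalpolys}).

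The gap is stage (2). The identity
\[
\varphi(T_1 \cdots T_n) \;=\; \sum_{\substack{\pi \in BNC(\chi) \\ \pi \leq \epsilon}} \varphi_\pi(T_1,\ldots,T_n) \sum_{\substack{\sigma \in LR_0(\chi,\epsilon) \\ \sigma \lrgeq \pi}} (-1)^{|\pi|-|\sigma|}
\]
is precisely the scalar case of Lemma \ref{lem:actingonFreeproductspace}, and it is the technical heart of the entire theorem. Its proof is not a routine bookkeeping exercise: in the paper it is a full induction on $n$ tracking how $\lambda_{\epsilon(1)}(T_1)$ (or $\rho_{\epsilon(1)}(T_1)$) acts on each term $E_{D_0}(\mu_2(T_2),\ldots,\mu_n(T_n))$, with a case analysis according to whether the leftmost spine reaching the top of $D_0$ is absent, has a shade different from $\epsilon(1)$, or has the same shade — in which last case the expansion $T_1(1-p_{\epsilon(1)})S_1 1_B = T_1 S_1 1_B - T_1 E(S_1)$ is what produces both the signs and the lateral cuts, and one must further verify that the displaced factor $E(S_1)$ can be moved into its correct position. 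You state the identity, describe correctly the mechanism that should yield it, and then explicitly defer it as ``the main obstacle.'' Since every other step of your argument is mechanical once this identity is in hand, what you have is a correct reduction of the theorem to its hardest lemma, not a proof of the theorem: the creation--annihilation bookkeeping that you set aside is exactly the content that the diagrammatic formalism exists to establish, and it must actually be established.
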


\section{Bi-Free Families with Amalgamation}
\label{sec:bifreefamilieswithamalgamation}

In this section, we will recall and develop the structures from \cite{V2013-1}*{Section 8} necessary to discuss bi-freeness with amalgamation.
Throughout the paper, $B$ will denote a unital algebra over $\mathbb{C}$.


\subsection{Concrete Structures for Bi-Free Probability with Amalgamation}

To begin the necessary constructions in the amalgamated setting, we need an analogue of a vector space with a specified vector state.
\begin{defn}
A \emph{$B$-$B$-bimodule with a specified $B$-vector state} is a triple $(\X, \mathring{\X}, p)$ where $\X$ is a direct sum of $B$-$B$-bimodules
\[
\X = B \oplus \mathring{\X},
\]
and $p : \X \to B$ is the linear map
\[
p(b \oplus \eta) = b.
\]
\end{defn}
\begin{rem}
\label{rem:propertiesofpointedBBbimodules}
Given a $B$-$B$-bimodule with a specified $B$-vector state $(\X, \mathring{\X}, p)$, for $b_1, b_2 \in B$ and $\eta \in \X$ we have
\[
p(b_1 \cdot \eta \cdot b_2) = b_1 p(\eta) b_2.
\]
\end{rem}

\begin{defn}
Given a $B$-$B$-bimodule with a specified $B$-vector state $(\X, \mathring{\X}, p)$, let $\mathcal{L}(\X)$ denote the set of linear operators on $\X$.  Given $b \in B$, we define two operators $L_b, R_b \in \mathcal{L}(\X)$ by
\[
L_b(\eta) = b \cdot \eta \qquad \mbox{ and } \qquad R_b(\eta) = \eta \cdot b \qquad \text{ for } \eta \in \X.
\]

In addition, we define the unital subalgebras $\mathcal{L}_\ell(\X)$ and $\mathcal{L}_r(\X)$ of $\mathcal{L}(\X)$ by
\begin{align*}
\mathcal{L}_\ell(\X) &:= \{ T \in \mathcal{L}(\X) \, \mid \, TR_b = R_b T \mbox{ for all }b \in B\}\\
\mathcal{L}_r(\X) &:= \{ T \in \mathcal{L}(\X) \, \mid \, TL_b = L_b T \mbox{ for all }b \in B\}.
\end{align*}
We call $\mathcal{L}_\ell(\X)$ and $\mathcal{L}_r(\X)$ the \emph{left} and \emph{right algebras} of $\mathcal{L}(\X)$, respectively.
\end{defn}
\begin{rem}
Note $\mathcal{L}_\ell(\X)$ consists of all operators in $\mathcal{L}(\X)$ that are right $B$-linear; that is, if $T \in \mathcal{L}_\ell(\X)$ then
\[
T( \eta \cdot b) = T(R_b(\eta)) = R_b(T(\eta)) = T(\eta) \cdot b
\]
for all $b \in B$ and $\eta \in \X$.
In the usual treatment of bimodules, what we have denoted $\mathcal{L}_\ell(\X)$ would instead be $\mathcal{L}_r(\X)$ and vice versa, to reflect the fact that its elements are right $B$-linear.
However, we take our left (resp. right) face to be a sub-algebra of $\mc L_\ell(\X)$ (resp. $\mc L_r(\X)$).
Moreover, one sees from the $B$-$B$-bimodule structure that $b \mapsto L_b$ is a homomorphism, $b \mapsto R_b$ is an anti-homomorphism, and the ranges of these maps commute.
Hence
\[
\{L_b \, \mid \, b \in B\} \subseteq \mathcal{L}_\ell(\X) \qquad \mbox{and} \qquad \{R_b \, \mid \, b \in B\} \subseteq \mathcal{L}_r(\X).
\]
Thus, in the context of this paper, $\mathcal{L}_\ell(\X)$ consists of `left' operators and $\mathcal{L}_r(\X)$ consists of `right' operators.
\end{rem}

As we are interested in $\mathcal{L}(\X)$ and amalgamating over $B$, we will need an ``expectation'' from $\mathcal{L}(\X)$ to $B$.
\begin{defn}
\label{defn:expectationofLXontoB}
Given a $B$-$B$-bimodule with a specified $B$-vector state $(\X, \mathring{\X}, p)$, we define the linear map $E_{\mathcal{L}(\X)} : \mathcal{L}(\X) \to B$ by
\[
E_{\mathcal{L}(\X)}(T) = p(T(1_B \oplus 0))
\]
for all $T \in \mathcal{L}(\X)$.  We call $E_{\mathcal{L}(\X)}$ the \emph{expectation of $\mathcal{L}(\X)$ onto $B$}.
\end{defn}
The following important properties justify calling $E_{\mathcal{L}(\X)}$ an expectation.

\begin{prop}
\label{prop:propertiesofEforLX}
Let $(\X, \mathring{\X}, p)$ be a $B$-$B$-bimodule with a specified $B$-vector state.  Then
\[
E_{\mathcal{L}(\X)}(L_{b_1} R_{b_2} T) = b_1 E_{\mathcal{L}(\X)}(T) b_2
\]
for all $b_1, b_2 \in B$ and $T \in \mathcal{L}(\X)$, and
\[
E_{\mathcal{L}(\X)}(TL_b) = E_{\mathcal{L}(\X)}(TR_b)
\]
for all $b \in B$ and $T \in \mathcal{L}(\X)$.
\end{prop}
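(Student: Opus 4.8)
The plan is to unwind both identities directly from the definition $E_{\mathcal{L}(\X)}(T) = p(T(1_B \oplus 0))$, writing $\xi := 1_B \oplus 0$ for the distinguished vector throughout. Everything reduces to tracking how the operators $L_{b_1}$ and $R_{b_2}$ transform a single vector and then applying the projection $p$, so no machinery beyond the definitions and Remark \ref{rem:propertiesofpointedBBbimodules} is needed.

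For the first identity, I would set $\eta := T(\xi) \in \X$ and compute, using the definitions of $L_{b_1}$ and $R_{b_2}$, that
\[
(L_{b_1} R_{b_2} T)(\xi) = L_{b_1}(R_{b_2}(\eta)) = b_1 \cdot \eta \cdot b_2.
\]
Applying $p$ and invoking Remark \ref{rem:propertiesofpointedBBbimodules}, which gives $p(b_1 \cdot \eta \cdot b_2) = b_1 p(\eta) b_2$, then yields
\[
E_{\mathcal{L}(\X)}(L_{b_1} R_{b_2} T) = b_1 p(\eta) b_2 = b_1 E_{\mathcal{L}(\X)}(T) b_2,
\]
since $p(\eta) = p(T(\xi)) = E_{\mathcal{L}(\X)}(T)$ by definition.

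The second identity hinges on the single observation that $L_b$ and $R_b$ have the same effect on the distinguished vector $\xi$. Indeed, since $\xi = 1_B \oplus 0$ and $B$ acts on its own summand by ordinary left and right multiplication, I would note that
\[
L_b(\xi) = b \cdot (1_B \oplus 0) = b \oplus 0 = (1_B \oplus 0) \cdot b = R_b(\xi).
\]
Consequently $T(L_b(\xi)) = T(R_b(\xi))$, and applying $p$ gives $E_{\mathcal{L}(\X)}(TL_b) = E_{\mathcal{L}(\X)}(TR_b)$ at once.

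I do not anticipate any genuine obstacle; the only point requiring care is recognizing in the second part that the asymmetry between $L_b$ (a left action) and $R_b$ (a right action) disappears once both are evaluated at $1_B$, precisely because $b \cdot 1_B = b = 1_B \cdot b$ in $B$. Once this is observed, both statements become one-line computations that never require looking inside the operator $T$.
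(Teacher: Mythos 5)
Your proof is correct and follows essentially the same route as the paper's: both evaluate at the distinguished vector $1_B \oplus 0$, use the $B$-$B$-bimodule compatibility of $p$ (which the paper carries out by splitting $T(1_B\oplus 0)$ into its $B$ and $\mathring{\X}$ components, exactly the content of Remark \ref{rem:propertiesofpointedBBbimodules}), and deduce the second identity from $L_b(1_B \oplus 0) = R_b(1_B \oplus 0)$.
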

\begin{proof}
If $b_1, b_2 \in B$ and $T \in \mathcal{L}(\X)$, we see that
\[
E_{\mathcal{L}(\X)}(L_{b_1} R_{b_2} T) = p(L_{b_1} R_{b_2} T(1_B \oplus 0)) = p(L_{b_1} R_{b_2} (E(T) \oplus \eta)) = p( (b_1 E(T) b_2) \oplus (b_1 \cdot \eta \cdot b_2)) = b_1 E(T) b_2
\]
where $\eta \in \mathring{\X}$. The second result is immediate from $L_b(1_B\oplus 0) = R_b(1_B\oplus0)$.
\end{proof}

To complete this section, we recall the construction of the reduced free product of $B$-$B$-bimodules with specified $B$-vector states.

\begin{cons}
\label{cons:freeproductconstruction}
Let $\{(\X_k, \mathring{\X}_k, p_k)\}_{k \in K}$ be $B$-$B$-bimodules with specified $B$-vector states.  For simplicity, let $E_k$ denote $E_{\mathcal{L}(\X_k)}$.  The \emph{free product of $\{(\X_k, \mathring{\X}_k, p_k)\}_{k \in K}$ with amalgamation over $B$} is defined to be the $B$-$B$-bimodule with specified vector state $(\X, \mathring{\mc{X}}, p)$  where $\X = B \oplus \mathring{\X}$ and $\mathring{\mc{X}}$ is the $B$-$B$-bimodule
\begin{align*}
	\mathring{\mc{X}}=\bigoplus_{n\geq 1}\bigoplus_{k_1\neq k_2\neq\cdots\neq k_n} \mathring{\mc{X}}_{k_1}\otimes_B\cdots\otimes_B\mathring{\mc{X}}_{k_n}
\end{align*}
with the left and right actions of $B$ on $\mathring{\X}$ defined by
\begin{align*}
		b \cdot (x_1\otimes\cdots\otimes x_n) &= (L_b x_1)\otimes\cdots\otimes x_n\\
		(x_1\otimes\cdots\otimes x_n) \cdot b&= x_1\otimes\cdots\otimes (R_b x_n).
\end{align*}
We use $\ast_{k \in K} \X_k$ to denote $\X$.
	
For each $k \in K$, we define the left representation $\lambda_k : \mathcal{L}(\X_k) \to \mathcal{L}(\X)$ as follows:  let
\[
V_k : \X \to \X_k \otimes_B \left(B\oplus \bigoplus_{n\geq 1}\bigoplus_{\substack{k_1\neq k_2\neq \cdots \neq k_n\\k_1\neq k}} \mathring{\mc{X}}_{k_1}\otimes_B\cdots\otimes_B\mathring{\mc{X}}_{k_n}\right)
\]
be the standard $B$-$B$-bimodule isomorphism and set
\[
\lambda_k(T) = V_k^{-1}(T \otimes I)V_k.
\]
More precisely,
\begin{align*}
		\lambda_k(T) (b) = E_{k}(T)b \oplus (T-L_{E_k(T)})b,
\end{align*}
and
\begin{align*}
		\lambda_k(T) (x_1\otimes\cdots \otimes x_n) =\left\{
										\begin{array}{ll}
											\left(L_{p_{k}(Tx_1)} x_2\otimes\cdots\otimes x_n \right) \oplus \left(  [(1-p_{k})Tx_1]\otimes\cdots \otimes x_n \right) & \text{if }x_1\in \mathring{\mc{X}}_k\\
												&\\
											\left(L_{E_{k}(T)} x_1\otimes\cdots\otimes x_n\right) \oplus \left([(T-L_{E_{k}(T)})1_B]\otimes x_1\otimes\cdots\otimes x_n \right) & \text{if }x_1\not\in\mathring{\mc{X}}_k
										\end{array}\right.,
	\end{align*}
for all $T \in \mathcal{L}(\X_k)$ (where the tensor of length zero is $1_B$).
Observe that $\lambda_k$ is a homomorphism, $\lambda_k(L_b)=L_b$, and $\lambda_k(\mathcal{L}_\ell(\X_k)) \subseteq \mathcal{L}_\ell(\X)$.

Similarly, for each $k \in K$, we define the map $\rho_k : \mathcal{L}(\X_k) \to \mathcal{L}(\X)$ as follows:  let
\[
W_k : \X \to \left(B\oplus\bigoplus_{n\geq1}\bigoplus_{\substack{k_1\neq k_2\neq\cdots\neq k_n\\ k_n \neq k}}  \mathring{\mc{X}}_{k_1}\otimes_B\cdots\otimes_B\mathring{\mc{X}}_{k_n}\right) \otimes_B \X_k
\]
be the standard $B$-$B$-bimodule isomorphism and define
\[
\rho_k(T) = W_k^{-1}(I \otimes T)W_k.
\]
Again,
\begin{align*}
		\rho_k(T)(b) = b E_k(T) \oplus (T-R_{E_k(T)}) b,
\end{align*}
and
\begin{align*}
		\rho_k(T)(x_1\otimes\cdots \otimes x_n)=\left\{
										\begin{array}{cl}
											 \left(x_1\otimes\cdots\otimes R_{p_{k}(T x_n)}x_{n-1}  \right) \oplus \left(x_1\otimes\cdots \otimes [(1-p_{k})Tx_n]\right) & \text{if }x_n\in \mathring{\mc{X}}_k\\
												&\\
											\left(x_1\otimes\cdots\otimes R_{E_k(T)}x_n\right) \oplus \left(x_1\otimes\cdots\otimes x_n\otimes [(T-R_{E_k(T)})1_B]\right) & \text{if }x_n\not\in\mathring{\mc{X}}_k
										\end{array}\right.,
	\end{align*}
for all $T \in \mathcal{L}(\X_k)$.  Clearly $\rho_k$ is an anti-homomorphism, $\rho_k(R_b)=R_b$, and $\rho_k(\mathcal{L}_r(\X_k)) \subseteq \mathcal{L}_r(\X)$.

In addition, note if $T \in \mathcal{L}(\X_k)$ then
	\begin{align*}
		E_{\mathcal{L}(\X)}(\lambda_k(T))=p(\lambda_k(T)1_B)= p( E_k(T)\oplus [T-L_{E_k(T)}]1_B ) = E_k(T)
	\end{align*}
and similarly $E_{\mathcal{L}(\X)}(\rho_k(T)) = E_k(T)$.  Hence, the above shows that $\mathcal{L}(\X)$ contains each $\mathcal{L}(\X_k)$ in a left-preserving, right-preserving, state-preserving way.
\end{cons}
\begin{rem}
\label{rem:actionscommuteonfreeproductspace}
With computation, we see that $\lambda_i(T)$ and $\rho_j(S)$ commute when $T \in \mathcal{L}_{\ell}(\X_i)$, $S \in \mathcal{L}_r(\X_j)$, and $i \neq j$.
Indeed, notice if $b \in B$ then
\begin{align*}
& \lambda_i(T) \rho_j(S) b \\
&= \lambda_i(T) \left(b E_j(S) \oplus (S-R_{E_j(S)}) b   \right) \\
&= E_{i}(T)bE_j(S) \oplus (T-L_{E_i(T)})bE_j(S) \oplus L_{E_i(T)}(S-R_{E_j(S)}) b  \oplus \left([(T-L_{E_{i}(T)})1_B] \otimes [(S-R_{E_j(S)}) b]   \right),
\end{align*}
whereas
\begin{align*}
&  \rho_j(S) \lambda_i(T)b \\
&= \rho_j(S) \left( E_{i}(T)b \oplus (T-L_{E_i(T)})b   \right) \\
&= E_{i}(T)b E_j(S) \oplus (S-R_{E_j(S)})E_{i}(T)b \oplus  R_{E_j(S)} (T-L_{E_i(T)})b \oplus    \left([(T-L_{E_{i}(T)})b] \otimes [(S-R_{E_j(S)}) 1_B]   \right).
\end{align*}
Since $T \in \mathcal{L}_{\ell}(\X_i)$ and $S \in \mathcal{L}_r(\X_j)$, one sees that
\begin{align*}
L_{E_i(T)}(S-R_{E_j(S)}) b = (S-R_{E_j(S)})L_{E_i(T)} b = (S-R_{E_j(S)})E_i(T) b,\\
R_{E_j(S)} (T-L_{E_i(T)})b = (T-L_{E_i(T)}) R_{E_j(S)} b =  (T-L_{E_i(T)})bE_j(S),
\end{align*}
and
\begin{align*}
[(T-L_{E_{i}(T)})b] \otimes [(S-R_{E_j(S)}) 1_B] & = [(T-L_{E_{i}(T)})R_b 1_B] \otimes [(S-R_{E_j(S)}) 1_B] \\
& = [R_b(T-L_{E_{i}(T)}) 1_B] \otimes [(S-R_{E_j(S)}) 1_B] \\
& = [(T-L_{E_{i}(T)}) 1_B] \otimes [L_b(S-R_{E_j(S)}) 1_B] \\
& = [(T-L_{E_{i}(T)}) 1_B] \otimes [(S-R_{E_j(S)}) L_b1_B] \\
& = [(T-L_{E_{i}(T)})1_B] \otimes [(S-R_{E_j(S)}) b] .
\end{align*}
Thus $\lambda_i(T) \rho_j(S) b = \rho_j(S) \lambda_i(T) b$.  Similar computations show $\lambda_i(T)$ and $\rho_j(T)$ commute on $\mathring{\X}_i$, $\mathring{\X}_j$, and $\mathring{\X}_i \otimes \mathring{\X}_j$, and it is trivial to see that $\lambda_i(T)$ and $\rho_j(T)$ commute on all other components of $\mathring{\X}$.

Note that $\lambda_i(T)$ and $\rho_i(S)$ need not commute, even if $T \in \mathcal{L}_{\ell}(\X_i)$ and $S \in \mathcal{L}_r(\X_i)$.
\end{rem}


\subsection{Abstract Structures for Bi-Free Probability with Amalgamation}

The purpose of this section is to develop an abstract notion of the pair $(\mathcal{L}(\X), E_{\mathcal{L}(\X)})$.
Based on the previous section and Proposition \ref{prop:propertiesofEforLX}, we make the following definition.
\begin{defn}
\label{defn:BBncps}
A \emph{$B$-$B$-non-commutative probability space} is a triple $(\mathcal{A}, E_\mathcal{A}, \varepsilon)$ where $\mathcal{A}$ is a unital algebra, $\varepsilon : B \otimes B^{\mathrm{op}} \to \mathcal{A}$ is a homomorphism such that $\varepsilon|_{B \otimes 1_B}$ and $\varepsilon|_{1_B \otimes B^{\mathrm{op}}}$ are injective, and $E_\mathcal{A} : \mathcal{A} \to B$ is a linear map such that
\[
E_{\mathcal{A}}(\varepsilon(b_1 \otimes b_2)T) = b_1 E_{\mathcal{A}}(T) b_2
\]
for all $b_1, b_2 \in B$ and $T \in \mathcal{A}$, and
\[
E_{\mathcal{A}}(T\varepsilon(b \otimes 1_B)) = E_{\mathcal{A}}(T\varepsilon(1_B \otimes b))
\]
for all $b \in B$ and $T \in \mathcal{A}$.

In addition, we define the unital subalgebras $\mathcal{A}_\ell$ and $\mathcal{A}_r$ of $\mathcal{A}$ by
\begin{align*}
\mathcal{A}_\ell &:= \{ T \in \mathcal{A}  \, \mid \, T\varepsilon(1_B \otimes b) = \varepsilon(1_B \otimes b) T \mbox{ for all }b \in B\}\\
\mathcal{A}_r &:= \{ T \in \mathcal{A}  \, \mid \, T\varepsilon(b \otimes 1_B) = \varepsilon(b \otimes 1_B) T \mbox{ for all }b \in B\}.
\end{align*}
We call $\mathcal{A}_\ell$ and $\mathcal{A}_r$ the \emph{left and right algebras }of $\mathcal{A}$ respectively.
\end{defn}
\begin{rem}
\label{rem:concrete-BBncp-satisfies-abstract-defn}
If $(\X, \mathring{\X}, p)$ is a $B$-$B$-bimodule with a specified $B$-vector state, we see via Proposition \ref{prop:propertiesofEforLX} that $(\mathcal{L}(\X), E_{\mathcal{L}(\X)}, \varepsilon)$ is a $B$-$B$-non-commutative probability space where $E_{\mathcal{L}(\X)}$ is as in Definition \ref{defn:expectationofLXontoB} and $\varepsilon : B \otimes B^{\mathrm{op}} \to B$ is defined by $\varepsilon(b_1 \otimes b_2) = L_{b_1} R_{b_2}$.
As such, in an arbitrary $B$-$B$-non-commutative probability space $(\mathcal{A}, E_\mathcal{A}, \varepsilon)$, we will often use  $L_b$ instead of $\varepsilon(b \otimes 1)$ and $R_b$ instead of $\varepsilon(1 \otimes b)$, in which case $L_b \in \mathcal{A}_\ell$ and $R_b \in \A_r$ for all $b \in B$.
\end{rem}

\begin{rem}
It may appear that Definition \ref{defn:BBncps} is incompatible with the notion of a $B$-probability space in free probability: that is, a pair $(\A, \varphi)$ where $\A$ is a unital algebra containing $B$, and $\varphi : \A \to B$ is a linear map such that $\varphi(b_1Tb_2) = b_1\varphi(T)b_2$ for all $b_1, b_2 \in B$ and $T \in \A$ (see \cite{NSS2002} for example).
However, $\A$ is a $B$-$B$-bimodule by left and right multiplication by $B$, and, by Remark \ref{rem:propertiesofpointedBBbimodules}, $\A$ can be made into a $B$-$B$-bimodule with specified $B$-vector space via $\varphi$.
Hence Remark \ref{rem:concrete-BBncp-satisfies-abstract-defn} implies $\mathcal{L}(\A)$ is a $B$-$B$-non-commutative probability state with
\[
E_{\mathcal{L}(\A)}(T) = \varphi(T)
\]
for all $T \in \mathcal{L}(\A)$.  In addition, we can view $\A$ as a unital subalgebra of both $\mathcal{L}_\ell(\A)$ and $\mathcal{L}_r(\A)$ by left and right multiplication on $\A$ respectively.  

Viewing $\A \subseteq \mathcal{L}_\ell(\A)$, it is clear we can recover the joint $B$-moments of elements of $\A$ from $E_{\mathcal{L}(\A)}$.
Indeed, for $T \in \A \subseteq \mathcal{L}_\ell(\A)$ we have
\[
E_{\mathcal{L}(\A)}(L_{b_1} T L_{b_2}) = E_{\mathcal{L}(\A)}(L_{b_1} T R_{b_2}) = E_{\mathcal{L}(\A)}(L_{b_1} R_{b_2} T) = b_1 E_{\mathcal{L}(\A)}(T) b_2,
\]
which is consistent with the defining property of $\varphi$.
\end{rem}

One should note that Definition \ref{defn:BBncps} differs slightly from \cite{V2013-1}*{Definition 8.3}.  However, given Proposition \ref{prop:propertiesofEforLX} and the following result which demonstrates that a $B$-$B$-non-commutative probability space embeds into $\mathcal{L}(\X)$ for a $B$-$B$-bimodule with a specified $B$-vector state $\X$, Definition \ref{defn:BBncps} indeed specifies the correct abstract objects to study.
\begin{thm}
\label{thm:representingbbncps}
Let $(\mathcal{A}, E_\mathcal{A}, \varepsilon)$ be a $B$-$B$-non-commutative probability space.  Then there exists a $B$-$B$-bimodule with a specified $B$-vector state $(\X, \mathring{\X}, p)$ and a unital homomorphism $\theta : \mathcal{A} \to \mathcal{L}(\X)$ such that 
\[
\theta(L_{b_1} R_{b_2}) = L_{b_1} R_{b_2}, \quad \theta(\mathcal{A}_\ell) \subseteq \mathcal{L}_\ell(\X), \quad
\theta(\mathcal{A}_r) \subseteq \mathcal{L}_r(\X), \quad
\mathrm{and} \quad
E_{\mathcal{L}(\X)}(\theta(T)) = E_\mathcal{A}(T)
\]
for all $b_1, b_2 \in B$ and $T \in \mathcal{A}$.
\end{thm}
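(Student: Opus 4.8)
The plan is to carry out a GNS-type construction, representing $\mathcal{A}$ on a quotient of itself by left multiplication. The one genuinely new feature compared with the scalar- or $B$-valued free case is that the left and right copies of $B$ inside $\mathcal{A}$, namely $\varepsilon(B \otimes 1_B)$ and $\varepsilon(1_B \otimes B)$, are a priori distinct, whereas any specified $B$-vector state forces $L_b$ and $R_b$ to agree on the cyclic vector $1_B \oplus 0$. Reconciling these is the crux.

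First I would form the left ideal
\[
J = \operatorname{span}\{T\varepsilon(b \otimes 1_B) - T\varepsilon(1_B \otimes b) : T \in \mathcal{A},\ b \in B\} = \mathcal{A}\cdot\{\varepsilon(b\otimes 1_B) - \varepsilon(1_B \otimes b) : b \in B\},
\]
and set $\X := \mathcal{A}/J$, with $\theta$ the left regular representation $\theta(S)(\overline T) = \overline{ST}$ (a well-defined unital homomorphism since $J$ is a left ideal). I equip $\X$ with the $B$-$B$-bimodule structure $b_1 \cdot \overline T \cdot b_2 := \overline{\varepsilon(b_1 \otimes b_2)T}$, i.e.\ $L_{b_1} R_{b_2} := \theta(\varepsilon(b_1 \otimes b_2))$; this is again well-defined because $J$ is a left ideal, and it builds in $\theta(\varepsilon(b_1 \otimes b_2)) = L_{b_1}R_{b_2}$. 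The defining choice of $J$ makes $\overline{\varepsilon(b\otimes 1_B)} = \overline{\varepsilon(1_B \otimes b)}$, which is exactly the relation $L_b\xi = R_b \xi$ at the cyclic vector $\xi := \overline{1_\mathcal{A}}$.

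Next I would produce the state and the splitting $\X = B \oplus \mathring{\X}$. The second axiom $E_\mathcal{A}(T\varepsilon(b\otimes1_B)) = E_\mathcal{A}(T\varepsilon(1_B\otimes b))$ says precisely that $E_\mathcal{A}$ annihilates the generators of $J$, so $p(\overline T) := E_\mathcal{A}(T)$ descends to a well-defined linear map $p : \X \to B$; the first axiom gives $p(b_1 \cdot \eta \cdot b_2) = b_1 p(\eta) b_2$, so $p$ is a bimodule map and $\mathring{\X} := \ker p$ is a sub-bimodule. Defining $\iota : B \to \X$ by $\iota(b) = \overline{\varepsilon(b\otimes 1_B)} = b \cdot \xi$, one checks using the $J$-relation $\overline{\varepsilon(b_1 b b_2 \otimes 1_B)} = \overline{\varepsilon(b_1 b \otimes b_2)}$ that $\iota$ is a bimodule homomorphism, and, using $E_\mathcal{A}(1_\mathcal{A}) = 1_B$, that $p \circ \iota = \mathrm{id}_B$. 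Hence $\iota$ is injective, $\iota(B)$ is complementary to $\ker p$, and $\X = \iota(B) \oplus \mathring{\X} \cong B \oplus \mathring{\X}$ with $p$ the projection onto the $B$-summand. The remaining assertions are then formal: $\theta(\varepsilon(b_1\otimes b_2)) = L_{b_1}R_{b_2}$ holds by construction; $\theta(\mathcal{A}_\ell) \subseteq \mathcal{L}_\ell(\X)$ and $\theta(\mathcal{A}_r) \subseteq \mathcal{L}_r(\X)$ follow since $\theta$ is a homomorphism and $R_b = \theta(\varepsilon(1_B \otimes b))$, $L_b = \theta(\varepsilon(b \otimes 1_B))$ lie in its image; and since $\theta(T)\xi = \overline T$ we obtain $E_{\mathcal{L}(\X)}(\theta(T)) = p(\theta(T)\xi) = E_\mathcal{A}(T)$.

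The main obstacle is the point raised in the first paragraph: choosing the correct left ideal $J$ so that the two $B$-actions collapse on the cyclic vector while $p$ stays well-defined. Everything hinges on the two axioms playing complementary roles — the bimodule axiom realizing $L_{b_1}R_{b_2}$, and the agreement axiom $E_\mathcal{A}(T\varepsilon(b\otimes 1_B)) = E_\mathcal{A}(T\varepsilon(1_B\otimes b))$ guaranteeing $E_\mathcal{A}$ vanishes on $J$. I would also flag one hygiene point to verify explicitly, namely $E_\mathcal{A}(1_\mathcal{A}) = 1_B$ (forced by consistency with the concrete model, where $E_{\mathcal{L}(\X)}(I) = 1_B$), since this unitality is exactly what makes $p\circ\iota = \mathrm{id}_B$ and hence the direct-sum splitting valid.
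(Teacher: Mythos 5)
Your proof is correct, and at bottom it is the same GNS-type construction as the paper's: both quotient by exactly the subspace $\mathrm{span}\{TL_b - TR_b \,\mid\, T \in \mathcal{A},\, b \in B\}$, and both use the two axioms of Definition \ref{defn:BBncps} in precisely the complementary roles you describe (the bimodule axiom giving the $B$-$B$-structure and $p(b_1\cdot\eta\cdot b_2) = b_1p(\eta)b_2$, the agreement axiom making the quotient legitimate). The packaging, however, differs in a way worth recording. The paper defines $\X = B \oplus \Y$ with $\Y = \ker(E_\mathcal{A})/\mathrm{span}\{TL_b - TR_b\}$ from the outset and writes $\theta$ by explicit two-case formulas, $\theta(T)(b) = E_\mathcal{A}(TL_b) \oplus q(TL_b - L_{E_\mathcal{A}(TL_b)})$ and $\theta(T)(q(A)) = E_\mathcal{A}(TA) \oplus q(TA - L_{E_\mathcal{A}(TA)})$, so multiplicativity of $\theta$ must then be verified by direct computation; you instead quotient all of $\mathcal{A}$ by the left ideal $J$ and take the left regular representation, which makes $\theta$ a unital homomorphism for free, at the cost of having to produce the splitting afterwards via $p$ and the section $\iota$. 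The two constructions are canonically isomorphic via $\overline{T} \mapsto E_\mathcal{A}(T) \oplus q\left(T - L_{E_\mathcal{A}(T)}\right)$, and under this identification your $\theta$ becomes the paper's, so nothing is lost either way; your version trades the paper's two multiplicativity computations for the (easier) verification that $\iota$ is a bimodule map splitting $p$. Finally, your hygiene point is well taken and applies equally to the paper's proof: the paper also implicitly uses $E_\mathcal{A}(1_\mathcal{A}) = 1_B$, for instance so that $TL_b - L_{E_\mathcal{A}(TL_b)}$ lies in $\ker(E_\mathcal{A})$ and $q$ may be applied to it; as you note, this unitality is forced by the statement of the theorem itself (take $T = 1_\mathcal{A}$ and use that $\theta$ is unital), so it is a harmless implicit hypothesis rather than a gap in either argument.
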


\begin{proof}
Consider the vector space over $\mathbb{C}$
\[
\X = B \oplus \Y,
\]
where
\[
\Y = \ker(E_\A) / \mathrm{span}\{TL_b - TR_b \, \mid \, T \in \A, b \in B\}.
\]
Note $\Y$ is a well-defined quotient vector space since $E_{\A}(TL_b - TR_b) = 0$ by Definition \ref{defn:BBncps}.   We will postpone describing the $B$-$B$-module structure on $\X$ until later in the proof.

Let $q : \ker(E_\A) \to \Y$ denote the canonical quotient map.
Then, for $T, A \in \A$ with $E_\mathcal{A}(A) = 0$ and $b \in B$, we define $\theta(T) \in \mathcal{L}(\X)$ by
\[
\theta(T)(b) = E_\mathcal{A}(TL_b) \oplus q(TL_b - L_{E_\mathcal{A}(TL_b)})
\]
and
\[
\theta(T)(q(A)) = E_\mathcal{A}(TA) \oplus q(TA - L_{E_\mathcal{A}(TA)}).
\]
Note $\theta(T)$ is a well-defined linear operator due to the definition of $\Y$.

To see that $\theta$ is a homomorphism, note $\theta$ is clearly linear.  To see that $\theta$ is multiplicative, fix $T, S \in \A$.  If $b \in B$, then
\[
\theta(T)(b) = E_\mathcal{A}(TL_b) \oplus q(TL_b - L_{E(_{\A}TL_b)}).
\]
Thus
\begin{align*}
\theta(S)(\theta(T)(b))  = & \,\,E_{\A}(SL_{E_{\A}(TL_b)}) \oplus q\left(SL_{E_{\A}(TL_b)} - L_{E_{\A}(SL_{E_{\A}(TL_b)})}    \right) \\
& + E_{\A}(S (TL_b - L_{E_{\A}(TL_b)})) \oplus q\left(S(TL_b - L_{E_{\A}(TL_b)}) - L_{E_{\A}(S(TL_b - L_{E_{\A}(TL_b)}))}  \right)  \\
= & \,\,E_{\A}(STL_b) \oplus q\left(STL_b - L_{E_{\A}(STL_b)}\right)  \\
= & \,\,\theta(ST)(b).
\end{align*}
Similarly, if $q(A)\in \Y$ then
\[
\theta(T)(q(A)) = E_{\A}(TA) \oplus q(TA - L_{E_{\A}(TA)}).
\]
Thus
\begin{align*}
\theta(S)(\theta(T)(q(A))) = &\,\, E_{\A}\left(SL_{E_{\A}(TA)}\right) \oplus q\left(SL_{E_{\A}(TA)} - L_{E_{\A}(SL_{E_{\A}(TA)})} \right) \\
& + E_{\A}(S(TA - L_{E_{\A}(TA)})) \oplus q(S(TA - L_{E_{\A}(TA)}) - L_{E_{\A}(S(TA - L_{E_{\A}(TA)}))} )\\
= & \,\,E_{\A}(STA) \oplus q\left( STA - L_{E_{\A}(STA)}\right) \\
= & \,\,\theta(ST)(q(A)).
\end{align*}
Hence $\theta$ is a homomorphism.

To make $\X$ a $B$-$B$-bimodule, we define
\[
b \cdot \xi = \theta(L_b)(\xi) \qquad \mbox{and}\qquad \xi \cdot b = \theta(R_b)(\xi)
\]
for all $\xi \in \X$ and $b \in B$.  It is clear that this is a well-defined $B$-$B$-bimodule structure on $\X$ since $\theta$ is a homomorphism.  

To demonstrate that $\X$ is indeed a $B$-$B$-bimodule with a specified vector state, we must show that $\Y$ is invariant under this $B$-$B$-bimodule structure, and that the $B$-$B$-bimodule structure when restricted to $B \subseteq \X$ is the canonical one.
If $b, b' \in B$ and $q(A) \in \Y$, then
\[
\theta(L_b)(b') = E_{\A}(L_b L_{b'}) \oplus q(L_b L_{b'} - L_{E_{\A}(L_b L_{b'})}) = bb' \oplus q(L_{bb'} - L_{bb'}) = bb' \oplus 0
\]
and
\begin{align*}
\theta(L_b)(q(A)) &= E_{\A}(L_b A) \oplus q(L_b A - L_{E_{\A}(L_b A)})\\
&= bE_{\A}(A) \oplus q(L_b A - L_{E_{\A}(L_b A)}) = 0 \oplus q(L_b A - L_{E_{\A}(L_b A)}).
\end{align*}
Similarly,
\[
\theta(R_b)(b') = E_{\A}(R_b L_{b'}) \oplus q(R_b L_{b'} - L_{E_{\A}(R_b L_{b'})}) = b'b \oplus q(L_{b'}R_b - L_{b'}L_{b}) = b'b \oplus 0
\]
and
\begin{align*}
\theta(R_b)(q(A)) &= E_{\A}(R_b A) \oplus q(R_b A - L_{E(R_b A)})\\
&= E_{\A}(A)b \oplus q(R_b A - L_{E_{\A}(R_b A)}) = 0 \oplus q(R_b A - L_{E_{\A}(R_b A)}).
\end{align*}
Thus $\X$ is a $B$-$B$-bimodule with a specified $B$-vector state.

Since $\theta$ is a homomorphism, it is clear that $\theta(\mathcal{A}_\ell) \subseteq \mathcal{L}_\ell(\X)$ and $\theta(\mathcal{A}_r) \subseteq \mathcal{L}_r(\X)$ due to the definition of the $B$-$B$-bimodule structure on $\X$.  Finally, if $T \in \A$ then
\[
E_{\mathcal{L}(\X)}(\theta(T)) = p(\theta(T) (1_B \oplus 0)) = p(E_\mathcal{A}(T) \oplus q(T - L_{E_\mathcal{A}(T)})) = E_\mathcal{A}(T).\qedhere
\]
\end{proof}


\subsection{Bi-Free Families of Pairs of $B$-Faces}

With the notion of a $B$-$B$-non-commutative probability space from Definition \ref{defn:BBncps}, we are now able to define the main concept of this paper, following \cite{V2013-1}*{Definition 8.5}.
\begin{defn}
\label{defn:pairofBfaces}
Let $(\mathcal{A}, E_\mathcal{A}, \varepsilon)$ be a $B$-$B$-non-commutative probability space.  A \emph{pair of $B$-faces of $\A$} is a pair $(C, D)$ of unital subalgebras of $\A$ such that
\[
\varepsilon(B \otimes 1_B) \subseteq C \subseteq \A_\ell \qquad \mathrm{and}\qquad \varepsilon(1_B \otimes B^{op}) \subseteq D \subseteq \A_r.
\]

A family $\{(C_k, D_k)\}_{k \in K}$ of pair of $B$-faces of $\A$ is said to be \emph{bi-free with amalgamation over $B$} (or simply \emph{bi-free over $B$}) if there exist $B$-$B$-bimodules with specified $B$-vector states $\{(\X_k, \mathring{\X}_k, p_k)\}_{k \in K}$ and unital homomorphisms $l_k : C_k \to \mathcal{L}_{\ell}(\X_k)$, $r_k : D_k \to \mathcal{L}_{r}(\X_k)$ such that the joint distribution of $\{(C_k, D_k)\}_{k \in K}$ with respect to $E_\mathcal{A}$ is equal to the joint distribution of the images $\{((\lambda_k \circ l_k)(C_k), (\rho_k \circ r_k)(D_k))\}_{k \in K}$ inside $\mathcal{L}(\ast_{k \in K} \X_k)$ with respect to $E_{\mathcal{L}(\ast_{k \in K} \X_k)}$.
\end{defn}
It will be an immediate consequence of Theorem \ref{thm:bifreeequivalenttouniversalpolys} that the selection of representations in Definition \ref{defn:pairofBfaces} does not matter (see \cite{V2013-1}*{Proposition 2.9}).
Note that if $\{(C_k, D_k)\}_{k \in K}$ is bi-free over $B$, then $\set{C_k}_{k\in K}$ is free with amalgamation over $B$ (as is $\set{D_k}_{k\in K}$) and $C_i$ and $D_j$ commute in distributions whenever $i \neq j$.

To conclude this section, we give the following example.
\begin{exam}
Let $\mathfrak{M}$ be a type II$_1$ factor with a faithful normal tracial state $\tau_\mathfrak{M}$ and let $\mathfrak{N}$ be a von Neumann subalgebra of $\mathfrak{M}$ containing $1_\mathfrak{M}$.  Then there exists a canonical trace-preserving (and thus faithful) conditional expectation $E_\mathfrak{N} : \mathfrak{M} \to \mathfrak{N}$ given by $E_\mathfrak{N}(T) = P_\mathfrak{N} T P_\mathfrak{N}$ for all $T \in \mathfrak{M}$ where $P_\mathfrak{N}$ is the projection of $L^2(\mathfrak{M}, \tau_\mathfrak{M})$ onto $L^2(\mathfrak{N}, \tau_\mathfrak{M})$.  Thus $\mathfrak{M}$ has a $\mathfrak{N}$-$\mathfrak{N}$-bimodule structure with specified $\mathfrak{N}$-vector state induced by $E_\mathfrak{N}$ and $\mathfrak{M}$ has a canonical left and right action on this bimodule.

Suppose $\mathfrak{N}$ is a von Neumann algebra such that there exists type II$_1$ factors $\mathfrak{M}_1$ and $\mathfrak{M}_2$ containing $\mathfrak{N}$.  It is elementary to verify that $\mathfrak{M}_1 \ast_\mathfrak{N} \mathfrak{M}_2$ can be made into a $\mathfrak{N}$-$\mathfrak{N}$-non-commutative probability space with expectation $E_\mathfrak{N} : \mathfrak{M}_1 \ast_\mathfrak{N} \mathfrak{M}_2 \to \mathfrak{N}$ the canonical conditional expectation.  Moreover, it is clear that $(\mathfrak{M}_1, \mathfrak{M}_1^{\mathrm{op}})$ and $(\mathfrak{M}_2, \mathfrak{M}_2^{\mathrm{op}})$ are bi-free over $\mathfrak{N}$.
\end{exam}

\section{Operator-Valued Bi-Multiplicative Functions}
\label{sec:OperatorValuedBiMultiplicativeFunctions}

In this section, we will extend the notion of multiplicative functions (see \cite{NSS2002}*{Section 2} or \cite{S1998}*{Section 2}) in order to study $B$-$B$-non-commutative probability spaces.

\subsection{A Partial Ordering and Notation}

Given $\chi : \set{1, \ldots, n} \to \set{\ell, r}$ and $\pi \in BNC(\chi)$, we consider the following additional ordering on $\set{1, \ldots, n}$.
\begin{defn}
Let $\chi : \{1,\ldots, n\} \to \{\ell, r\}$.  The total ordering $\prec_\chi$ on $\{1,\ldots, n\}$ is defined as follows: for $a, b\in\{1,\ldots, n\}$, $a\prec_\chi b$ if $s_\chi^{-1} (a)< s_\chi^{-1}(b)$.
Given a subset $V \subseteq \{1,\ldots, n\}$, we will say that $V$ is a $\chi$-interval if it is an interval with respect to the ordering $\prec_\chi$.
In addition we define $\min_{\prec_\chi}(V)$ and $\max_{\prec_\chi}(V)$ to be the minimal and maximal elements of $V$ with respect to the ordering $\prec_\chi$.
\end{defn}

Note that if $\pi \in BNC(\chi)$, then $\prec_\chi$ orders the nodes of $\pi$ from top to bottom along the left side of its bi-non-crossing diagram, then bottom to top along the right side.
\begin{nota}
Let $\chi: \{1,\ldots, n\} \to \{\ell, r\}$.
Given a subset $S \subseteq \{1,\ldots, n\}$ we define $\chi|_S : S \to \{\ell, r\}$ to be the restriction of $\chi$ to $S$.
Similarly, given an $n$-tuple of objects $(T_1, \ldots, T_n)$, we define $(T_1, \ldots, T_n)|_S$ to be the $|S|$-tuple where the elements in positions not indexed by an element of $S$ are removed.
Finally, given $\pi \in BNC(\chi)$ such that $S$ is a union of blocks of $\pi$, we define $\pi|_S \in BNC(\chi|_S)$ to be the bi-non-crossing partition formed by taking the blocks of $\pi$ contained in $S$.  
\end{nota}

\subsection{Definition of Bi-Multiplicative Functions}

With the above definitions and notation, we can begin the necessary constructions for the operator-valued bi-free cumulants.  Note simple examples follow the definition along with a heuristic version of Definition \ref{defnbimultiplicative} will be given in Remark \ref{remonbimultiplicative} which should be of aid to an experienced free probabilist.  
\begin{defn}
\label{defnbimultiplicative}
Let $(\A, E, \varepsilon)$ be a $B$-$B$-non-commutative probability space and let 
\[
\Phi : \bigcup_{n\geq 1} \bigcup_{\chi : \{1,\ldots, n\} \to \{\ell, r\}} BNC(\chi) \times \mathcal{A}_{\chi(1)} \times \cdots \times \mathcal{A}_{\chi(n)} \to B
\]
be a function that is linear in each $\mathcal{A}_{\chi(k)}$.  We say that $\Phi$ is \emph{bi-multiplicative} if the following four conditions hold:
\begin{enumerate}[label=(\roman*)]
\item\label{def:bimult:i} Let $\chi : \{1,\ldots, n\} \to \{\ell, r\}$, let $T_k \in \mathcal{A}_{\chi(k)}$, let $b \in B$, and let
\[
q = \max\{ k \in \{1,\ldots, n\} \, \mid \, \chi(k) \neq \chi(n)\}.
\]
If $\chi(n) = \ell$ then
\[
\Phi_{1_\chi}(T_1, \ldots, T_{n-1}, T_nL_b) = \left\{
\begin{array}{ll}
\Phi_{1_\chi}(T_1, \ldots, T_{q-1}, T_q R_b, T_{q+1}, \ldots, T_n) & \mbox{if } q \neq -\infty  \\
\Phi_{1_\chi}(T_1, \ldots, T_{n-1}, T_n)b & \mbox{if } q = -\infty
\end{array} \right. .
\]
If $\chi(n) = r$ then 
\[
\Phi_{1_\chi}(T_1, \ldots, T_{n-1}, T_nR_b) = \left\{
\begin{array}{ll}
\Phi_{1_\chi}(T_1, \ldots, T_{q-1}, T_q L_b, T_{q+1}, \ldots, T_n) & \mbox{if } q \neq -\infty  \\
b\Phi_{1_\chi}(T_1, \ldots, T_{n-1}, T_n) & \mbox{if } q = -\infty
\end{array} \right. .
\]
\item\label{def:bimult:ii} Let $\chi : \{1,\ldots, n\} \to \{\ell, r\}$, let $T_k \in \mathcal{A}_{\chi(k)}$, let $b \in B$, let $p \in \{1,\ldots, n\}$, and let
\[
q = \max\{ k \in \{1,\ldots, n\} \, \mid \, \chi(k) = \chi(p), k < p\}.
\]
If $\chi(p) = \ell$ then
\[
\Phi_{1_\chi}(T_1, \ldots, T_{p-1}, L_bT_p, T_{p+1}, \ldots, T_n) = \left\{
\begin{array}{ll}
\Phi_{1_\chi}(T_1, \ldots, T_{q-1}, T_qL_b, T_{q+1}, \ldots, T_n) & \mbox{if } q \neq -\infty  \\
b \Phi_{1_\chi}(T_1, T_2, \ldots, T_n) & \mbox{if } q = -\infty
\end{array} \right. .
\]
If $\chi(p) = r$ then
\[
\Phi_{1_\chi}(T_1, \ldots, T_{p-1}, R_bT_p, T_{p+1}, \ldots, T_n) = \left\{
\begin{array}{ll}
\Phi_{1_\chi}(T_1, \ldots, T_{q-1}, T_qR_b, T_{q+1}, \ldots, T_n) & \mbox{if } q \neq -\infty  \\
\Phi_{1_\chi}(T_1, T_2, \ldots, T_n) b & \mbox{if } q = -\infty
\end{array} \right. .
\]
\item\label{def:bimult:iii} Let $\chi : \{1,\ldots, n\} \to \{\ell, r\}$, $T_k \in \mathcal{A}_{\chi(k)}$, and $\pi \in BNC(\chi)$.
Suppose that $V_1, \ldots, V_m$ are $\chi$-intervals which partition $\set{1, \ldots, n}$, each a union of blocks of $\pi$.
Further, suppose $V_1, \ldots, V_m$ are ordered by $\prec_\chi$.
Then
\[
\Phi_\pi(T_1, \ldots, T_n) = \Phi_{\pi|_{V_1}}\left((T_1, \ldots, T_n)|_{V_1}\right) \cdots \Phi_{\pi|_{V_m}}\left((T_1, \ldots, T_n)|_{V_m}\right).
\]
\item\label{def:bimult:iv} Let $\chi : \{1,\ldots, n\} \to \{\ell, r\}$, $T_k \in \mathcal{A}_{\chi(k)}$, and $\pi \in BNC(\chi)$.
Suppose that $V$ and $W$ partition $\{1,\ldots, n\}$, each of which is a union of blocks of $\pi$, $V$ is a $\chi$-interval, and
\[
\min_{\prec_\chi}(\{1,\ldots, n\}), \max_{\prec_\chi}(\{1,\ldots, n\}) \in W.
\]
Let
\[
\theta = \max_{\prec_\chi}\left(\left\{k \in W  \, \mid \, k \prec_\chi \min_{\prec_\chi}(V)\right\}\right) \qquad\mbox{ and } \qquad \gamma = \min_{\prec_\chi}\left(\left\{k \in W  \, \mid \, \max_{\prec_\chi}(V) \prec_\chi k\right\}\right).
\]
Then
\begin{align*}
\Phi_\pi(T_1, \ldots, T_n) &= \left\{
\begin{array}{ll}
\Phi_{\pi|_{W}}\left(\left(T_1, \ldots, T_{\theta-1}, T_\theta L_{\Phi_{\pi|_{V}}\left((T_1,\ldots, T_n)|_{V}\right)}, T_{\theta+1}, \ldots, T_n\right)|_{W}\right)  & \mbox{if } \chi(\theta) = \ell \\
\Phi_{\pi|_{W}}\left(\left(T_1, \ldots, T_{\theta-1}, R_{\Phi_{\pi|_{V}}\left((T_1,\ldots, T_n)|_{V}\right)} T_\theta, T_{\theta+1}, \ldots, T_n\right)|_{W}\right)  & \mbox{if } \chi(\theta) = r 
\end{array} \right. \\
&= \left\{
\begin{array}{ll}
\Phi_{\pi|_{W}}\left(\left(T_1, \ldots, T_{\gamma-1},  L_{\Phi_{\pi|_{V}}\left((T_1,\ldots, T_n)|_{V}\right)} T_\gamma, T_{\gamma+1}, \ldots, T_n\right)|_{W}\right)  & \mbox{if } \chi(\gamma) = \ell  \\
\Phi_{\pi|_{W}}\left(\left(T_1, \ldots, T_{\gamma-1}, T_\gamma R_{\Phi_{\pi|_{V}}\left((T_1,\ldots, T_n)|_{V}\right)}, T_{\gamma+1}, \ldots, T_n\right)|_{W}\right) & \mbox{if } \chi(\gamma) = r
\end{array} \right. .
\end{align*}
\end{enumerate}
\end{defn}

\begin{ex}
For an example on how to apply Properties (i) and (ii) of Definition \ref{defnbimultiplicative}, consider a bi-multiplicative function $\Phi$ and $\chi :\{1, 2, 3, 4, 5\} \to \{\ell, r\}$ such that $\chi^{-1}(\{\ell\}) = \{1, 2, 4\}$ and $\chi^{-1}(\{r\}) = \{3, 5\}$.  Then $\pi = 1_\chi$ is represented diagrammatically as
\[
\begin{tikzpicture}[baseline]
\draw[thick,dashed] (-.5,-.25) -- (-.5,2.25);
\draw[thick,dashed] (.5,-.25) -- (.5,2.25);
\draw[thick,dashed] (-.5,-.25) -- (.5,-.25);
	\node[ left] at (-.5, 2) {1};
	\draw[fill=black] (-.5,2) circle (0.05);
	\node[left] at (-.5, 1.5) {2};
	\draw[fill=black] (-.5,1.5) circle (0.05);
	\node[right] at (.5, 1) {3};
	\draw[fill=black] (.5,1) circle (0.05);
	\node[left] at (-.5, .5) {4};
	\draw[fill=black] (-.5,.5) circle (0.05);
	\node[right] at (.5,0) {5};
	\draw[fill=black] (.5,0) circle (0.05);
	\draw[thick] (-.5,2) -- (0,2) -- (0, 1) -- (.5,1);
	\draw[thick] (-.5,1.5) -- (0,1.5) -- (0, 0) -- (.5,0);
	\draw[thick] (-.5,.5) -- (0,.5);
	\end{tikzpicture}.
\]
Using Properties (i) and (ii), we obtain that
\[
\Phi_\pi(T_1, L_{b_1}T_2, R_{b_2}T_3, T_4, T_5 R_{b_3}) = \Phi_\pi(T_1L_{b_1}, T_2, T_3, T_4 L_{b_3}, T_5)b_2.
\]
Thus one should view Properties (i) and (ii) as being able to move elements of $B$ along the dotted lines shown.
\end{ex}
\begin{ex}
The diagram on the left below demonstrates a bi-non-crossing partition satisfying the assumptions of Property (iii) of Definition \ref{defnbimultiplicative} with $V_1 = \{1, 3, 4 \}$, $V_2 = \{5, 7, 8\}$, and $V_3 = \{2, 6\}$.  The diagram on the right below demonstrates a bi-non-crossing partition satisfying the assumptions of Property (iv) of Definition \ref{defnbimultiplicative} with either $V = \{3,4,5,7,8 \}$ and $W = \{1, 2, 6\}$ (in which case $\theta = 1$ and $\gamma = 6$), or $V = \{4,5,8\}$ and $W = \{1,2,3,6,7\}$ (in which case $\theta = 3$ and $\gamma = 7$).
\begin{align*}
	\begin{tikzpicture}[baseline]
	\node[left] at (-.5, 3.5) {1};
	\draw[blue,fill=blue] (-.5,3.5) circle (0.05);
	\node[right] at (.5, 3) {2};
	\draw[red,fill=red] (.5,3) circle (0.05);
	\node[left] at (-.5, 2.5) {3};
	\draw[blue,fill=blue] (-.5,2.5) circle (0.05);
	\node[left] at (-.5, 2) {4};
	\draw[blue,fill=blue] (-.5,2) circle (0.05);
	\node[left] at (-.5, 1.5) {5};
	\draw[ggreen,fill=ggreen] (-.5,1.5) circle (0.05);
	\node[right] at (.5, 1) {6};
	\draw[red,fill=red] (.5,1) circle (0.05);
	\node[right] at (.5, .5) {7};
	\draw[ggreen,fill=ggreen] (.5,.5) circle (0.05);
	\node[left] at (-.5,0) {8};
	\draw[ggreen,fill=ggreen] (-.5,0) circle (0.05);
	\draw[thick, blue] (-.5,3.5) -- (0,3.5) -- (0, 2) -- (-.5,2);
	\draw[thick, blue] (-.5,2.5) -- (0,2.5);
	\draw[thick, ggreen] (-.5,1.5) -- (0,1.5) -- (0, 0) -- (-.5,0);
	\draw[thick, red] (.5,3) -- (.25,3) -- (0.25, 1) -- (.5,1);
	\draw[thick, ggreen] (0,.5) -- (.5,.5);
	\end{tikzpicture}
\qquad
	\begin{tikzpicture}[baseline]
	\node[left] at (-.5, 3.5) {1};
	\draw[fill=black] (-.5,3.5) circle (0.05);
	\node[right] at (.5, 3) {2};
	\draw[fill=black] (.5,3) circle (0.05);
	\node[left] at (-.5, 2.5) {3};
	\draw[fill=black] (-.5,2.5) circle (0.05);
	\node[left] at (-.5, 2) {4};
	\draw[fill=black] (-.5,2) circle (0.05);
	\node[left] at (-.5, 1.5) {5};
	\draw[fill=black] (-.5,1.5) circle (0.05);
	\node[right] at (.5, 1) {6};
	\draw[fill=black] (.5,1) circle (0.05);
	\node[right] at (.5, .5) {7};
	\draw[fill=black] (.5,.5) circle (0.05);
	\node[left] at (-.5,0) {8};
	\draw[fill=black] (-.5,0) circle (0.05);
	\draw[thick] (-.5,3.5) -- (0.25,3.5) -- (0.25, 1) -- (.5,1);
	\draw[thick] (.5,3) -- (0.25,3);
	\draw[thick] (-.5,2.5) -- (0,2.5) -- (0, .5) -- (.5,.5);
	\draw[thick] (-.5,2) -- (-0.25,2) -- (-0.25, 0) -- (-.5,0);
	\draw[thick] (-.5,1.5) -- (-0.25,1.5);		
	\end{tikzpicture}
	\end{align*}
\end{ex}
\begin{rem}
\label{remonbimultiplicative}
Although Definition \ref{defnbimultiplicative} is cumbersome (due to the necessity of specifying cases based on whether certain terms are left or right operators), its properties can be viewed as direct analogues of those of a multiplicative map as described in \cite{NSS2002}*{Section 2.2}.  Indeed, for $\pi \in BNC(\chi)$ and a bi-multiplicative map $\Phi$, each expression of $\Phi_\pi(T_1, \ldots, T_n)$ in Definition \ref{defnbimultiplicative} comes from viewing $s_\chi^{-1}  \circ \pi \in NC(n)$, rearranging the $n$-tuple $(T_1, \ldots, T_n)$ to $(T_{s_\chi(1)}, \ldots, T_{s_\chi(n)})$, replacing any occurrences of $L_bT_j$, $T_j L_b$, $R_b T_j$, and $T_j R_b$ with $bT_j$, $T_j b$, $T_j b$, and $bT_j$ respectively, applying one of the properties of a multiplicative map from \cite{NSS2002}*{Section 2.2}, and reversing the above identifications.
In particular, these properties reduce to those of a multiplicative map when $\chi^{-1}(\{\ell\}) = \set{1,\ldots, n}$.
We use the more complex Definition \ref{defnbimultiplicative} as it will be easier to verify for functions later on.

Since a bi-multiplicative function satisfies all of these properties, it is easy to see that if $\Phi$ is bi-multiplicative, then $\Phi_\pi(T_1, \ldots, T_n)$ is determined by the values 
\[
\{\Phi_{1_{\chi'}}(S_1, \ldots, S_m) \, \mid \, m \in \mathbb{N}, \chi' : \{1,\ldots, m\} \to \{\ell, r\}, S_k \in \mathcal{A}_{\chi(k)}\}.
\]
There may be multiple ways to reduce $\Phi$ to an expression involving elements from the above set, but Definition \ref{defnbimultiplicative} implies that all such reductions are equal.
\end{rem}

Note that Definition \ref{defnbimultiplicative} automatically implies additional properties for bi-multiplicative functions.  Indeed one can either verify the following proposition via Definition \ref{defnbimultiplicative} and casework, or can appeal to the fact that the properties of bi-multiplicative functions can be described via the properties of multiplicative functions as in Remark \ref{remonbimultiplicative} and use the fact that multiplicative functions have additional properties (e.g. see \cite{S1998}*{Remark 2.1.3}).
\begin{prop}
\label{propenhancedproperties}
Let $(\A, E, \varepsilon)$ be a $B$-$B$-non-commutative probability space and let
\[
\Phi : \bigcup_{n\geq 1} \bigcup_{\chi : \{1,\ldots, n\} \to \{\ell, r\}} BNC(\chi) \times \mathcal{A}_{\chi(1)} \times \cdots \times \mathcal{A}_{\chi(n)} \to B
\]
be a bi-multiplicative function.  Given any $\chi : \{1,\ldots, n\} \to \{\ell, r\}$, $\pi \in BNC(\chi)$, and $T_k \in \mathcal{A}_{\chi(k)}$ Properties (i) and (ii) of Definition \ref{defnbimultiplicative} hold when $1_\chi$ is replaced with $\pi$.
\end{prop}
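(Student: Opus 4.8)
The plan is to fix $\chi:\{1,\dots,n\}\to\{\ell,r\}$, a partition $\pi\in BNC(\chi)$, and operators $T_k\in\mathcal{A}_{\chi(k)}$, and to deduce Properties (i) and (ii) for $\Phi_\pi$ from the four defining conditions by induction on $n$. When $\pi=1_\chi$ the two assertions are precisely Properties (i) and (ii) of Definition \ref{defnbimultiplicative}, and the case $n=1$ forces $\pi=1_\chi$; these serve as the base of the induction. For the inductive step I would assume $\pi\neq 1_\chi$, so that $s_\chi^{-1}\cdot\pi\in NC(n)$ has at least two blocks, and use Properties (iii) and (iv) to express $\Phi_\pi$ in terms of $\Phi_{\pi|_V}$ for proper $\chi$-intervals $V$ that are unions of blocks (hence involve strictly fewer than $n$ nodes), to which the inductive hypothesis applies.

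The dichotomy I would use is whether $\pi$ is \emph{$\prec_\chi$-reducible}: either $\{1,\dots,n\}$ splits into at least two $\chi$-intervals that are unions of blocks, in which case Property (iii) factors $\Phi_\pi$ as an ordered product; or $\pi$ is irreducible, which (transporting to $NC(n)$ via $s_\chi^{-1}$) happens exactly when the $\prec_\chi$-minimal and $\prec_\chi$-maximal elements of $\{1,\dots,n\}$ lie in the same block, in which case I would extract an inner $\chi$-interval $V$ via Property (iv). The key combinatorial observation driving the reduction is that in Property (i) the modified node $n$ and the index $q=\max\{k:\chi(k)\neq\chi(n)\}$ occupy the two central adjacent positions of $\prec_\chi$ (namely $s_\chi^{-1}(n)$ and its $\prec_\chi$-successor or $\prec_\chi$-predecessor) whenever $q\neq-\infty$, and likewise in Property (ii) the nodes $p$ and $q$ are $\prec_\chi$-adjacent within a single side.

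Thus in the reducible case either $n$ and $q$ lie in one interval factor --- where the inductive hypothesis applies verbatim once one checks that the local index computed inside that factor equals the global $q$ --- or the decomposition must separate them at the boundary between the left and right sides, so that $n$ is the last node of the final left factor and $q$ is the last node of the first right factor, and these two factors are adjacent in the $\prec_\chi$-ordered product. In the latter situation Property (i) for each of these smaller partitions (each with local index equal to $-\infty$, since each factor involves a single side) converts $T_nL_b$ into a right multiplication by $b$, which is then absorbed into the adjacent factor as $T_qR_b$, using that the $L$ and $R$ operators arising from $\varepsilon$ commute (so $L_bR_{b'}=R_{b'}L_b$); this reproduces exactly the claimed identity. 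Property (ii) is handled symmetrically, modifying the node $p$ and propagating $b$ to its $\prec_\chi$-neighbour $q$.

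The step I expect to be the main obstacle is the irreducible case handled by Property (iv): here the extracted interval $V$ is reinserted as a single operator $L_{\Phi_{\pi|_V}(\cdots)}$ or $R_{\Phi_{\pi|_V}(\cdots)}$ attached to a boundary node $T_\theta$ or $T_\gamma$, and when the node being modified lies inside $V$ or immediately adjacent to this insertion point, one must verify that propagating $b$ through the reduction is consistent with this reinsertion and lands on the correct globally defined index $q$. Checking this compatibility --- that commuting $b$ past the inserted scalar $B$-operator is legitimate and that the resulting index agrees with $q$ --- is the delicate bookkeeping, and is where the commutation $L_bR_{b'}=R_{b'}L_b$ together with the already-established Property (iv) for general $\pi$ must be invoked with care. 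As indicated in the remark preceding the statement, an alternative to this casework is to transport the whole computation through the dictionary of Remark \ref{remonbimultiplicative} to an ordinary multiplicative function on $NC(n)$ and quote the corresponding classical property (cf. \cite{S1998}*{Remark 2.1.3}).
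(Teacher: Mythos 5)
Your proposal is correct and follows precisely the route the paper itself indicates: the paper gives no detailed proof of this proposition, remarking only that it can be verified either ``via Definition \ref{defnbimultiplicative} and casework'' or by translating to ordinary multiplicative functions through Remark \ref{remonbimultiplicative} and \cite{S1998}*{Remark 2.1.3} --- both of which are exactly the strategies you describe. Your fleshing-out of the casework route (induction on $n$, the $\prec_\chi$-adjacency of the two modified nodes, the reducible/irreducible dichotomy handled by Properties (iii) and (iv) respectively, with the local index $q$ reducing to $-\infty$ on single-sided factors) is sound and in fact supplies more detail than the paper does.
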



\section{Bi-Free Operator-Valued Moment Function is Bi-Multiplicative}
\label{sec:verifyingrecursivedefinitionfromuniversalpolynomialshasdesiredproperties}

In this section, we will define the bi-free operator-valued moment function based on recursively defined functions $E_\pi(T_1,\ldots, T_n)$ that appear via actions on free product spaces.
However, it is not immediate that it is bi-multiplicative.
The proof of this result requires substantial case work, to which this section is dedicated.

\subsection{Definition of the Bi-Free Operator-Valued Moment Function}

We will begin with the recursive definition of expressions that appear in the operator-valued moment polynomials.
These will arise in the proof of Theorem \ref{thm:bifreeequivalenttouniversalpolys}.

\begin{defn}
\label{defn:recursivedefinitionofEpi}
Let $(\A, E, \varepsilon)$ be a $B$-$B$-non-commutative probability space.  For $\chi : \{1,\ldots,n\} \to \{\ell, r\}$, $\pi \in BNC(\chi)$, and $T_1, \ldots, T_n \in \A$, we define $E_\pi(T_1,\ldots, T_n) \in B$ via the following recursive process.
Let $V$ be the block of $\pi$ that terminates closest to the bottom, so $\min(V)$ is largest among all blocks of $\pi$. Then:
\begin{itemize}
\item If $\pi$ contains exactly one block (that is, $\pi = 1_\chi)$, we define $E_{1_\chi}(T_1, \ldots, T_n) = E(T_1 \cdots T_n)$.
\item If $V = \set{k+1, \ldots, n}$ for some $k \in \set{1, \ldots, n-1}$, then $\min(V)$ is not adjacent to any spines of $\pi$ and we define
\[
E_\pi(T_1, \ldots, T_n) = \left\{
\begin{array}{ll}
E_{\pi|_{V^c}}(T_1, \ldots, T_k L_{E_{\pi|_V}(T_{k+1},\ldots, T_n)}) & \mbox{if } \chi(\min(V)) = \ell  \\
E_{\pi|_{V^c}}(T_1, \ldots, T_k R_{E_{\pi|_V}(T_{k+1},\ldots, T_n)}) & \mbox{if } \chi(\min(V)) = r
\end{array} \right..
\]
In the long run, it will not matter if we choose $L$ or $R$ by the first part of this recursive definition and by Definition \ref{defn:BBncps}.
\item Otherwise, $\min(V)$ is adjacent to a spine. Let $W$ denote the block of $\pi$ corresponding to the spine adjacent to $\min(V)$, and
let $k$ be the first element of $W$ below where $V$ terminates -- that is, $k$ is the smallest element of $W$ that is larger than $\min(V)$.  We define
\[
E_\pi(T_1, \ldots, T_n) = \left\{
\begin{array}{ll}
E_{\pi|_{V^c}}((T_1, \ldots, T_{k-1}, L_{E_{\pi|_V}((T_{1},\ldots, T_n)|_V)} T_k, T_{k+1}, \ldots, T_n)|_{V^c}) & \mbox{if } \chi(\min(V)) = \ell  \\
E_{\pi|_{V^c}}((T_1, \ldots, T_{k-1}, R_{E_{\pi|_V}((T_{1},\ldots, T_n)|_V)} T_k, T_{k+1}, \ldots, T_n)|_{V^c}) & \mbox{if } \chi(\min(V)) = r
\end{array} \right..
\]
\end{itemize}
\end{defn}
\begin{exam}
Let $\pi$ be the following bi-non-crossing partition.
\begin{align*}
	\begin{tikzpicture}[baseline]
	\node[left] at (-.5, 4) {1};
	\draw[black,fill=black] (-.5,4) circle (0.05);
	\node[right] at (.5, 3.5) {2};
	\draw[black,fill=black] (.5,3.5) circle (0.05);
	\node[left] at (-.5, 3) {3};
	\draw[black,fill=black] (-.5,3) circle (0.05);
	\node[left] at (-.5, 2.5) {4};
	\draw[black,fill=black] (-.5,2.5) circle (0.05);
	\node[right] at (.5, 2) {5};
	\draw[black, fill=black] (.5,2) circle (0.05);
	\node[right] at (.5, 1.5) {6};
	\draw[black,fill=black] (.5,1.5) circle (0.05);
	\node[left] at (-.5, 1) {7};
	\draw[black, fill=black] (-.5,1) circle (0.05);
	\node[right] at (.5, .5) {8};
	\draw[black, fill=black] (.5,.5) circle (0.05);
	\node[right] at (.5,0) {9};
	\draw[black, fill=black] (.5,0) circle (0.05);
	\draw[thick, black] (-.5,4) -- (0,4) -- (0, 3.5) -- (.5,3.5);
	\draw[thick, black] (-.5,2.5) -- (-.25,2.5) -- (-.25, 1) -- (-.5,1);
	\draw[thick, black] (.5,.5) -- (0.25,.5) -- (0.25, 1.5) -- (.5,1.5);
	\draw[thick, black] (-.5,3) -- (0,3) -- (0, 0) -- (.5,0);
		\draw[thick, black] (.5,2) -- (0,2);
	\end{tikzpicture}
\end{align*}
Then
\[
E_\pi(T_1, \ldots, T_9) = E\left(T_1T_2 L_{E\left(T_3 L_{E(T_4T_7)}  T_5 R_{E(T_6T_8)} T_9 \right)}    \right)
\]
via the following sequence of diagrams (where $X = L_{E\left(T_3 L_{E(T_4T_7)}  T_5 R_{E(T_6T_8)} T_9 \right)}$):
\begin{align*}
	\begin{tikzpicture}[baseline]
	\node[left] at (-.5, 4) {$T_1$};
	\draw[black,fill=black] (-.5,4) circle (0.05);
	\node[right] at (.5, 3.5) {$T_2$};
	\draw[black,fill=black] (.5,3.5) circle (0.05);
	\node[left] at (-.5, 3) {$T_3$};
	\draw[black,fill=black] (-.5,3) circle (0.05);
	\node[left] at (-.5, 2.5) {$T_4$};
	\draw[black,fill=black] (-.5,2.5) circle (0.05);
	\node[right] at (.5, 2) {$T_5$};
	\draw[black, fill=black] (.5,2) circle (0.05);
	\node[right] at (.5, 1.5) {$T_6$};
	\draw[black,fill=black] (.5,1.5) circle (0.05);
	\node[left] at (-.5, 1) {$T_7$};
	\draw[black, fill=black] (-.5,1) circle (0.05);
	\node[right] at (.5, .5) {$T_8$};
	\draw[black, fill=black] (.5,.5) circle (0.05);
	\node[right] at (.5,0) {$T_9$};
	\draw[black, fill=black] (.5,0) circle (0.05);
	\draw[thick, black] (-.5,4) -- (0,4) -- (0, 3.5) -- (.5,3.5);
	\draw[thick, black] (-.5,2.5) -- (-.25,2.5) -- (-.25, 1) -- (-.5,1);
	\draw[thick, black] (.5,.5) -- (0.25,.5) -- (0.25, 1.5) -- (.5,1.5);
	\draw[thick, black] (-.5,3) -- (0,3) -- (0, 0) -- (.5,0);
	\draw[thick, black] (.5,2) -- (0,2);
	\draw[thick] (1, 2) -- (1.5, 2) -- (1.45, 1.95);
	\draw[thick] (1.5, 2) -- (1.45, 2.05);
	\end{tikzpicture}
	\begin{tikzpicture}[baseline]
	\node[left] at (-.5, 4) {$T_1$};
	\draw[black,fill=black] (-.5,4) circle (0.05);
	\node[right] at (.5, 3.5) {$T_2$};
	\draw[black,fill=black] (.5,3.5) circle (0.05);
	\node[left] at (-.5, 3) {$T_3$};
	\draw[black,fill=black] (-.5,3) circle (0.05);
	\node[left] at (-.5, 2.5) {$T_4$};
	\draw[black,fill=black] (-.5,2.5) circle (0.05);
	\node[right] at (.5, 2) {$T_5$};
	\draw[black, fill=black] (.5,2) circle (0.05);
	\node[left] at (-.5, 1) {$T_7$};
	\draw[black,fill=black] (-.5,1) circle (0.05);
	\node[right] at (.5,0) {$R_{E(T_6T_8)}T_9$};
	\draw[black, fill=black] (.5,0) circle (0.05);
	\draw[thick, black] (-.5,4) -- (0,4) -- (0, 3.5) -- (.5,3.5);
	\draw[thick, black] (-.5,2.5) -- (-.25,2.5) -- (-.25, 1) -- (-.5,1);
	\draw[thick, black] (-.5,3) -- (0,3) -- (0, 0) -- (.5,0);
	\draw[thick, black] (.5,2) -- (0,2);
	\draw[thick] (1.5, 2) -- (2, 2) -- (1.95, 1.95);
	\draw[thick] (2, 2) -- (1.95, 2.05);
	\end{tikzpicture}
	\begin{tikzpicture}[baseline]
	\node[left] at (-.5, 4) {$T_1$};
	\draw[black,fill=black] (-.5,4) circle (0.05);
	\node[right] at (.5, 3.5) {$T_2$};
	\draw[black,fill=black] (.5,3.5) circle (0.05);
	\node[left] at (-.5, 3) {$T_3$};
	\draw[black,fill=black] (-.5,3) circle (0.05);
	\node[right] at (.5, 2) {$L_{E(T_4T_7)}T_5$};
	\draw[black,fill=black] (.5,2) circle (0.05);
	\node[right] at (.5,0) {$R_{E(T_6T_8)}T_9$};
	\draw[black, fill=black] (.5,0) circle (0.05);
	\draw[thick, black] (-.5,4) -- (0,4) -- (0, 3.5) -- (.5,3.5);
	\draw[thick, black] (-.5,3) -- (0,3) -- (0, 0) -- (.5,0);
	\draw[thick, black] (.5,2) -- (0,2);
	\draw[thick] (2.5, 2) -- (3, 2) -- (2.95, 1.95);
	\draw[thick] (3, 2) -- (2.95, 2.05);
	\end{tikzpicture}
	\begin{tikzpicture}[baseline]
	\node[left] at (-.5, 4) {$T_1$};
	\draw[black,fill=black] (-.5,4) circle (0.05);
	\node[right] at (.5, 3.5) {$T_2X$};
	\draw[black,fill=black] (.5,3.5) circle (0.05);
	\draw[thick, black] (-.5,4) -- (0,4) -- (0, 3.5) -- (.5,3.5);
	\end{tikzpicture}.
\end{align*}
\end{exam}

Note that the definition of $E_\pi(T_1,\ldots, T_n)$ is invariant under $B$-$B$-non-commutative probability space embeddings, such as those listed in Theorem \ref{thm:representingbbncps}.
Observe that in the context of Definition \ref{defn:recursivedefinitionofEpi}, we ignore the notions of left and right operators.
However, we are ultimately interested in the following.
\begin{defn}
Let $(\mathcal{A}, E, \varepsilon)$ be a $B$-$B$-non-commutative probability space.  The \emph{bi-free operator-valued moment function}
\[
\mathcal{E} : \bigcup_{n\geq 1} \bigcup_{\chi : \{1,\ldots, n\} \to \{\ell, r\}} BNC(\chi) \times \mathcal{A}_{\chi(1)} \times \cdots \times \mathcal{A}_{\chi(n)} \to B
\]
is defined by
\[
\mathcal{E}_\pi(T_1, \ldots, T_n) = E_\pi(T_1, \ldots, T_n)
\]
for each $\chi : \{1, \ldots, n\} \to \{\ell, r\}$, $\pi \in BNC(\chi)$, and $T_k \in \mathcal{A}_{\chi(k)}$.
\end{defn}

Our next goal is the prove the following which is not apparent from Definition \ref{defn:recursivedefinitionofEpi}.

\begin{thm}
\label{thm:samantha}
The operator-valued bi-free moment function $\mathcal{E}$ on $\A$ is bi-multiplicative.
\end{thm}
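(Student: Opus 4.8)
The plan is to verify the four defining conditions of Definition \ref{defnbimultiplicative} directly, proceeding by strong induction on the number of nodes $n$. The key simplification is that conditions \ref{def:bimult:i} and \ref{def:bimult:ii} are only required for the full partition $1_\chi$, where by definition $\mathcal{E}_{1_\chi}(T_1,\ldots,T_n) = E(T_1\cdots T_n)$; these therefore need no induction and follow immediately from the two axioms of a $B$-$B$-non-commutative probability space in Definition \ref{defn:BBncps}. Concretely, to establish \ref{def:bimult:i} when $\chi(n)=\ell$, I would first replace $T_nL_b$ using $E(XL_b)=E(XR_b)$ to obtain $E(T_1\cdots T_n R_b)$, then slide $R_b$ leftward: since every left operator commutes with every $R_b$, the factor $R_b$ passes through all trailing $T_j$ with $\chi(j)=\ell$ until it reaches the last right node $T_q$, yielding $\mathcal{E}_{1_\chi}(T_1,\ldots,T_qR_b,\ldots,T_n)$; if no such $q$ exists it exits to the far end and the relation $E(\varepsilon(1\otimes b)X)=E(X)b$ produces the trailing $b$. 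The other three cases of \ref{def:bimult:i} and \ref{def:bimult:ii} are handled symmetrically, using that $\A_r$-operators commute with every $L_b$ and the identity $E(\varepsilon(b_1\otimes b_2)T)=b_1E(T)b_2$.

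For conditions \ref{def:bimult:iii} and \ref{def:bimult:iv}, which concern arbitrary $\pi \in BNC(\chi)$, I would run the induction through the recursive Definition \ref{defn:recursivedefinitionofEpi}: letting $V_\ast$ be the block of $\pi$ with largest minimum, one reduction step expresses $\mathcal{E}_\pi$ in terms of $\mathcal{E}_{\pi|_{V_\ast^c}}$ on strictly fewer nodes, with a single $L$- or $R$-operator carrying the scalar $E_{\pi|_{V_\ast}}$ inserted. To this reduced expression the full inductive hypothesis applies, including the general-$\pi$ versions of \ref{def:bimult:i} and \ref{def:bimult:ii} available through Proposition \ref{propenhancedproperties}. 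For \ref{def:bimult:iii}, since each $V_j$ is a union of blocks, $V_\ast$ lies in a single interval $V_{j_0}$; the crux is to check that the block $W$ whose spine is adjacent to $\min(V_\ast)$ also lies in $V_{j_0}$, so that the reduction step is internal to the factor $\mathcal{E}_{\pi|_{V_{j_0}}}$ and leaves the other intervals untouched. Granting this, the inductive hypothesis factors $\mathcal{E}_{\pi|_{V_\ast^c}}$ as a product over the $V_j$, and reassembling recovers $\prod_j \mathcal{E}_{\pi|_{V_j}}$.

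Condition \ref{def:bimult:iv} is the heart of the matter and where the casework is heaviest. Here $V$ is a $\chi$-interval nested strictly inside the complementary block-union $W$, and one must show the value $E_{\pi|_V}$ can be pulled out and reattached as an $L$- or $R$-operator at the boundary node $\theta$, and equivalently at $\gamma$. I would peel $V_\ast$ and split into the case $V_\ast\subseteq V$, where the reduction modifies only the nested factor and I match it against computing $E_{\pi|_V}$ by the same peel, and the case $V_\ast\subseteq W$, where the reduction lives in the ambient factor and I must verify that the inserted operator and the distinguished nodes $\theta,\gamma$ transform compatibly. The equivalence of the two displayed expressions — insertion at $\theta$ versus at $\gamma$ — I would obtain from \ref{def:bimult:i} and \ref{def:bimult:ii}, which shuttle a $B$-valued factor across the top and bottom strands of $W$. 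Throughout, every sub-case bifurcates according to whether the relevant node is a left or right node, quadrupling the bookkeeping, and the base cases ($n=1$ and $\pi=1_\chi$) are immediate.

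The \textbf{main obstacle} I anticipate is the mismatch between the two orderings at play: the recursion in Definition \ref{defn:recursivedefinitionofEpi} is driven by the ordinary order on $\{1,\ldots,n\}$, peeling the block lowest in the diagram, whereas the interval and nesting hypotheses of \ref{def:bimult:iii} and \ref{def:bimult:iv} are phrased via $\prec_\chi$. Consequently the peeled block $V_\ast$ may fall on either side of the $V$/$W$ boundary and may be adjacent to it, forcing one to verify in each configuration that the recursion's attachment node $k$ and its choice of $L$ versus $R$ are exactly compatible with the factorization being claimed. Carefully enumerating these boundary configurations, rather than any single conceptual difficulty, is what makes the argument technical.
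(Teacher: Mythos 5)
Your handling of Properties \ref{def:bimult:i} and \ref{def:bimult:ii} is correct and is exactly the paper's Lemma \ref{lemeasy}. The genuine gap is in your plan for Property \ref{def:bimult:iii}: the statement you single out as ``the crux'' --- that the block whose spine is adjacent to $\min(V_\ast)$ lies in the same $\chi$-interval $V_{j_0}$ as the peeled block $V_\ast$, so that the recursion of Definition \ref{defn:recursivedefinitionofEpi} is internal to the factor $\mathcal{E}_{\pi|_{V_{j_0}}}$ and ``leaves the other intervals untouched'' --- is false. Take $\chi = (\ell,\ell,r,r)$ and $\pi = \left\{\{1,4\},\{2\},\{3\}\right\}$, which is bi-non-crossing, with $V_1 = \{1,2,4\}$ and $V_2 = \{3\}$; since the $\prec_\chi$-order is $1,2,4,3$, these are $\chi$-intervals, each a union of blocks, ordered by $\prec_\chi$. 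The recursion peels $V_\ast = \{3\} \subseteq V_2$ (it has the largest minimum and is not a tail interval), but the spine adjacent to the node $3$ belongs to the block $\{1,4\} \subseteq V_1$, and the attachment node is $k = 4$, so the reduction step produces
\[
E_\pi(T_1,T_2,T_3,T_4) \;=\; E_{\pi|_{V_1}}\!\left(T_1,\,T_2,\,R_{E(T_3)}T_4\right),
\]
inserting the $B$-valued factor of $V_2$ into the \emph{other} interval. The desired conclusion $\mathcal{E}_{\pi|_{V_1}}(T_1,T_2,T_4)\,E(T_3)$ is still true, but only after one commutes $R_{E(T_3)}$ past $T_1, L_{E(T_2)}, T_4$ appropriately (using that right $B$-operators commute with $\mathcal{A}_\ell$) and applies $E(\varepsilon(1\otimes b)X) = E(X)b$; it cannot be obtained by pretending the reduction never leaves $V_2$.

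This is not an edge case you can enumerate away: these cross-boundary insertions are the entire difficulty of the theorem (as the paper notes, there is no centering trick available), and the paper's proof is organized around them rather than around internality. Its Lemma \ref{lemproductofdisjointblocksforE} proves Property \ref{def:bimult:iii} by first reducing to intervals whose $\prec_\chi$-extremes lie in a single block, then inducting on the number of intervals and, in each of several positional cases, unwinding the recursion into a single expectation $E(T'_{q_1}\cdots)$ with interspersed $L_b$'s and $R_b$'s and commuting the factor belonging to one interval across the operators of the others. Property \ref{def:bimult:iv} then requires, in addition, a special case in which $\theta$, $\gamma$, and the $\prec_\chi$-extremes all lie in one block (Lemma \ref{lemreductionofbimultiplicativeinsideablockforE}), directly proven general-$\pi$ versions of Properties \ref{def:bimult:i} and \ref{def:bimult:ii} (Lemmata \ref{lemenhancedpropertyiforE} and \ref{lemenhancedpropertyiiforE}), and an induction on the number of blocks of $W$ nesting around $V$. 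Since your argument for \ref{def:bimult:iv} is sketched conditionally on the same internal-reduction picture and on \ref{def:bimult:iii} itself, the proposal as written does not go through; the repair --- systematically transporting the inserted $L_b$/$R_b$ factors across interval and block boundaries via the commutation relations --- is precisely the casework that constitutes the paper's proof.
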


We divide the proof of the above theorem into several lemmata, verifying various of properties from Definition \ref{defnbimultiplicative}.  Properties (i) and (ii) are immediate but, unfortunately, the remaining properties are not as easily verified.


\begin{lem}
\label{lemeasy}
The operator-valued bi-free moment function $\mathcal{E}$ satisfies Properties (i) and (ii) of Definition \ref{defnbimultiplicative}.
\end{lem}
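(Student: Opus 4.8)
The plan is to exploit the fact that Properties (i) and (ii) of Definition \ref{defnbimultiplicative} involve only $\mathcal{E}_{1_\chi}$, and that by the first bullet of Definition \ref{defn:recursivedefinitionofEpi} we have $\mathcal{E}_{1_\chi}(T_1, \ldots, T_n) = E(T_1 \cdots T_n)$. Thus the entire lemma reduces to elementary manipulations of $E$ applied to a single product, using only the two defining identities of $E_{\mathcal{A}}$ from Definition \ref{defn:BBncps} together with the commutation relations: every $T \in \mathcal{A}_\ell$ commutes with each $R_b = \varepsilon(1_B \otimes b)$, every $T \in \mathcal{A}_r$ commutes with each $L_b = \varepsilon(b \otimes 1_B)$, and $\mathcal{A}_\ell, \mathcal{A}_r$ are subalgebras containing the $L_b$ and $R_b$ respectively.

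For Property (i) with $\chi(n) = \ell$, I would first apply the identity $E(SL_b) = E(SR_b)$ with $S = T_1 \cdots T_n$ to convert the trailing $L_b$ into $R_b$. When $q = -\infty$ every $T_k$ is a left operator, so $R_b$ commutes with the whole product $T_1 \cdots T_n \in \mathcal{A}_\ell$; pulling it to the front and applying $E(R_b S) = E(S)b$ (the $b_1 = 1_B$ instance of the first identity) yields $E(T_1 \cdots T_n)b$. When $q \neq -\infty$, by construction $q$ is the index of the last right operator, so $T_{q+1}, \ldots, T_n$ are all left operators and commute with $R_b$; sliding $R_b$ leftward past them deposits it immediately after $T_q$, giving $\mathcal{E}_{1_\chi}(T_1, \ldots, T_q R_b, \ldots, T_n)$, where $T_q R_b \in \mathcal{A}_r$ since $\mathcal{A}_r$ is a subalgebra. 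The case $\chi(n) = r$ follows by exchanging the roles of $L$ and $R$, of $\ell$ and $r$, and of left and right multiplication by $B$.

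For Property (ii) I would argue analogously, but now without invoking $E(SL_b) = E(SR_b)$: when $\chi(p) = \ell$, the inserted factor $L_b$ already commutes with every right operator, and by the definition of $q$ the entries strictly between positions $q$ and $p$ are all right operators. Hence $L_b$ slides leftward past $T_{q+1}, \ldots, T_{p-1}$ to sit just after $T_q$, settling the $q \neq -\infty$ case; when $q = -\infty$ all of $T_1, \ldots, T_{p-1}$ are right operators, so $L_b$ travels to the very front and $E(L_b T_1 \cdots T_n) = b\, E(T_1 \cdots T_n)$ by the first identity. The case $\chi(p) = r$ is symmetric, using that $R_b$ commutes with left operators.

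There is no genuine obstacle here — this is precisely why the companion lemmata for Properties (iii) and (iv) are the technical heart of Theorem \ref{thm:samantha}. The only points demanding care are the bookkeeping: verifying that $q$ selects exactly the operator of the correct handedness, so that the intervening entries are uniformly of the opposite type and therefore commute with $L_b$ or $R_b$, and confirming that each modified argument such as $T_q R_b$ or $T_q L_b$ remains in the appropriate algebra $\mathcal{A}_{\chi(q)}$ and is thus a legitimate input to $\mathcal{E}_{1_\chi}$.
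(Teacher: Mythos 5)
Your proof is correct, and it is precisely the argument the paper has in mind: the paper states Lemma \ref{lemeasy} without proof, remarking only that Properties (i) and (ii) are ``immediate,'' and what makes them immediate is exactly your observation that $\mathcal{E}_{1_\chi}(T_1,\ldots,T_n) = E(T_1\cdots T_n)$ reduces everything to the two defining identities of $E$ in Definition \ref{defn:BBncps} together with the commutation of $\mathcal{A}_\ell$ with the $R_b$ and of $\mathcal{A}_r$ with the $L_b$. Your bookkeeping (that the definition of $q$ forces all intervening entries to be of the opposite handedness, and that $T_qL_b$, $T_qR_b$ stay in the correct algebra) is exactly the verification the authors left to the reader.
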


\subsection{Verification of Property (iii) from Definition \ref{defnbimultiplicative} for $\mathcal{E}$}

\begin{lem}
\label{lemproductofdisjointblocksforE}
The operator-valued bi-free moment function $\mathcal{E}$ satisfies Property (iii) of Definition \ref{defnbimultiplicative}.
\end{lem}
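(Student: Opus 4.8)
\emph{Proof proposal.}
The plan is to prove the factorization by induction on $n$, following the recursive structure of Definition \ref{defn:recursivedefinitionofEpi}. First I would reduce to the case of two intervals. Since a union of consecutive $\prec_\chi$-intervals is again a $\prec_\chi$-interval and is again a union of blocks of $\pi$, I may write $\{1,\ldots,n\} = V_1 \sqcup (V_2 \cup \cdots \cup V_m)$; granting the two-interval case, an induction on $m$ (splitting off $V_1$ and then factoring $\mathcal{E}_{\pi|_{V_2 \cup \cdots \cup V_m}}$) yields the general statement. So it suffices to show that $\mathcal{E}_\pi(T_1,\ldots,T_n) = \mathcal{E}_{\pi|_{V_1}}((T_1,\ldots,T_n)|_{V_1})\,\mathcal{E}_{\pi|_{V_2}}((T_1,\ldots,T_n)|_{V_2})$ whenever $V_1 \prec_\chi V_2$ are $\chi$-intervals, each a union of blocks of $\pi$, partitioning $\{1,\ldots,n\}$. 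Working with two intervals has the advantage that any index lying outside a given interval lies in the other, adjacent, interval.

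For the two-interval case I would induct on $n$, the case $n \le 1$ being immediate. Let $V$ be the block of $\pi$ terminating closest to the bottom, so that one step of Definition \ref{defn:recursivedefinitionofEpi} gives $\mathcal{E}_\pi(T_1,\ldots,T_n) = \mathcal{E}_{\pi|_{V^c}}(T'_1,\ldots,T'_{n-|V|})$, where $T'$ is $(T_1,\ldots,T_n)|_{V^c}$ with the coefficient $c := \mathcal{E}_{\pi|_V}((T_1,\ldots,T_n)|_V)$ inserted as $L_c$ or $R_c$ (according to $\chi(\min V)$) at the prescribed node $k$. Since $\min(V)$ is numerically largest among all blocks, it is largest among the blocks of $V_{i_0}$, where $V_{i_0} \in \{V_1,V_2\}$ is the unique interval containing $V$; hence $V$ is also the bottom-most block of $\pi|_{V_{i_0}}$, and peeling it off \emph{there} yields the same coefficient $c$ and reduces the $i_0$-factor $\mathcal{E}_{\pi|_{V_{i_0}}}$ to $\mathcal{E}_{\pi|_{V_{i_0}\setminus V}}$ with $c$ inserted at the node prescribed within $V_{i_0}$. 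A short interval argument shows that $V_{i_0}\setminus V$ is again a $\prec_\chi$-interval of $V^c$ (any $x \in V^c$ strictly $\prec_\chi$-between two elements of $V_{i_0}\setminus V$ lies in $V_{i_0}$ and avoids $V$, hence in $V_{i_0}\setminus V$), so the reduced data satisfies the hypotheses for $n-|V|$ indices and the inductive hypothesis applies to $\mathcal{E}_{\pi|_{V^c}}(T')$.

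The main obstacle is reconciling the \emph{position} at which $c$ is inserted. When the insertion node $k$ produced by peeling $V$ from $\pi$ coincides with the one produced by peeling $V$ from $\pi|_{V_{i_0}}$ — which happens in the adjacent case when the bracketing spine $W$ stays inside $V_{i_0}$ — the two reductions agree verbatim, the inductive hypothesis factors $\mathcal{E}_{\pi|_{V^c}}(T')$ over $V_1', V_2'$, and reassembling the $i_0$-factor closes the induction. In the remaining boundary configurations, however — the non-adjacent case $V = \{k+1,\ldots,n\}$, and the adjacent case in which $W$ escapes $V_{i_0}$ (as occurs, for instance, when $V_{i_0}=V$) — the node $k$ can fall into the \emph{other} interval, so the coefficient $c$ lands in the wrong factor after applying the inductive hypothesis. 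Here I would invoke Properties (i) and (ii), already established for $\mathcal{E}$ in Lemma \ref{lemeasy} and extended to arbitrary $\pi$ in Proposition \ref{propenhancedproperties}: these allow me to slide $c$ from the terminal node of one interval-factor, out as an element of $B$, and back in at the initial node of the adjacent interval-factor, proving the two placements give equal products. Verifying that the $L$-versus-$R$ choices generated by the recursion are exactly those consumed by Properties (i) and (ii), across the four sub-cases (non-adjacent/adjacent against $\chi(\min V)\in\{\ell,r\}$), is where the bulk of the casework lies; once this sliding is in hand, the inductive hypothesis completes the proof.
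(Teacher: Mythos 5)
Your reduction to the two-interval case is sound, and you have correctly identified the crux of the matter: the node at which the recursion of Definition \ref{defn:recursivedefinitionofEpi} deposits the coefficient $c=\mathcal{E}_{\pi|_V}((T_1,\ldots,T_n)|_V)$ need not lie in the interval containing $V$, so the coefficient must be moved between factors. However, the step you use to do this moving is circular. Proposition \ref{propenhancedproperties} is a statement about functions \emph{already known} to be bi-multiplicative; it cannot be applied to $\mathcal{E}$ at this point, since the bi-multiplicativity of $\mathcal{E}$ is precisely what Theorem \ref{thm:samantha} is in the process of establishing. What your sliding step actually requires is the validity of Properties (i) and (ii) for $\mathcal{E}$ at the \emph{non-full} partitions $\pi|_{V_j}$ appearing as factors; in the paper these are Lemmas \ref{lemenhancedpropertyiforE} and \ref{lemenhancedpropertyiiforE}, which come after the present lemma and whose proofs begin by invoking it (they use Lemma \ref{lemproductofdisjointblocksforE} to reduce to $m=1$). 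The only result available to you, Lemma \ref{lemeasy}, covers the full partition $1_\chi$ and therefore does not reach these factors.

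There is a second, related obstruction to your induction on $n$: the recursion inserts $L_c$ or $R_c$ according to $\chi(\min(V))$, not according to the side of the receiving node $k$ (see the paper's nine-node example following Definition \ref{defn:recursivedefinitionofEpi}, where $L_{E(T_4T_7)}$ is attached to the \emph{right} operator $T_5$). Hence the peeled tuple can contain entries such as $L_cT_k$ with $T_k\in\mathcal{A}_r$, which do not lie in $\mathcal{A}_{\chi(k)}$. Your inductive hypothesis is a statement about tuples from $\mathcal{A}_{\chi(1)}\times\cdots\times\mathcal{A}_{\chi(n)}$, so it cannot be applied to such a tuple, and the factorization need not hold for arbitrary tuples from $\mathcal{A}$ (already $E(T_1L_b)\neq E(T_1)b$ when $T_1\notin\mathcal{A}_\ell$). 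The paper's proof sidesteps both problems by never evaluating $\mathcal{E}_\sigma$ on modified tuples: it reduces to intervals whose $\prec_\chi$-endpoints lie in a single block, inducts on the number of intervals $m$, and unravels the recursion completely into one expectation $E(\cdots)$ of a product in $\mathcal{A}$, performing all coefficient movement inside that single expectation via commutation of $R_b$ with $\mathcal{A}_\ell$ (resp.\ $L_b$ with $\mathcal{A}_r$) and the identity $E(R_bT)=E(T)b$. Your argument could plausibly be repaired by a simultaneous induction on $n$ establishing Property (iii) together with the arbitrary-$\pi$ versions of (i) and (ii), but as written the appeal to Proposition \ref{propenhancedproperties} is a genuine gap.
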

\begin{proof}
We claim it suffices to consider the case that $\min_{\prec_\chi}(V_k)$ and $\max_{\prec_\chi}(V_k)$ are in the same block of $\pi$ for each $k \in \{1,\ldots, m\}$.
Indeed, suppose the result holds with this additional assumption.
Fix $V_1, \ldots, V_m$ satisfying the assumptions of Property (iii) of Definition \ref{defnbimultiplicative}.
For each $k$, partition $V_k$ into $\chi$-intervals $V_{k,1}, \ldots, V_{k,p_k}$ composed of blocks of $\pi$ ordered by $\prec_\chi$, with $\min_{\prec_\chi}(V_{k,q}) \sim_\pi \max_{\prec_\chi}(V_{k,q})$.

Since $\bigcup^m_{k=1} \bigcup^{p_k}_{q=1} V_{k,q}$ satisfies the assumptions of Property (iii) of Definition \ref{defnbimultiplicative} and the additional assumption, one obtains
\[
\mathcal{E}_\pi(T_1, \ldots, T_n) = \left(\prod^{p_1}_{q=1} \mathcal{E}_{\pi|_{V_{1,q}}}\left((T_1, \ldots, T_n)|_{V_{1,q}}\right)\right) \cdots\left(\prod^{p_m}_{q=1} \mathcal{E}_{\pi|_{V_{m,q}}}\left((T_1, \ldots, T_n)|_{V_{m,q}}\right)\right).
\]
In each of the above products, we write the terms from left to right in order of increasing $q$.
Again, by applying the case where the additional assumption holds, one obtains that
\[
\left(\prod^{p_k}_{q=1} \mathcal{E}_{\pi|_{V_{k,q}}}\left((T_1, \ldots, T_n)|_{V_{k,q}}\right)\right) = \mathcal{E}_{\pi|_{V_k}}\left((T_1, \ldots, T_n)|_{V_k}\right)
\]
for each $k \in \{1,\ldots, m\}$.
Hence we may assume that $\min_{\prec_\chi}(V_k)\sim_\pi\max_{\prec_\chi}(V_k)$ for each $k \in \{1,\ldots, m\}$.
\par 
To continue the proof, we proceed by induction on $m$ with the case $m = 1$ being trivial.  Assume Property (iii) of Definition \ref{defnbimultiplicative} is satisfied for $\mathcal{E}$ for all smaller values of $m$.
Fix $V_1, \ldots, V_m$ and note that either $1 \in V_1$ (i.e. $\chi(1) = \ell$) or $1 \in V_m$ (i.e. $\chi(1) = r$).
We will treat the case when $1 \in V_1$; for the other case, consult a mirror.
Let $V'_1 \subseteq V_1$ be the block of $\pi$ containing $1$ and $\max_{\prec_\chi}(V_1)$.
The proof is divided into three cases.
\par
\underline{\textit{Case 1: $\min(V_k) > \max(V_1)$ for all $k \neq 1$.}}  As an example of this case, consider the following diagram where $V_1 = \{1, 2, 3 \}$, $V_2 = \{4, 6\}$, and $V_3 = \{5, 7, 8, 9\}$.
\begin{align*}
	\begin{tikzpicture}[baseline]
	\node[left] at (-.5, 4) {1};
	\draw[green,fill=green] (-.5,4) circle (0.05);
	\node[left] at (-.5, 3.5) {2};
	\draw[green,fill=green] (-.5,3.5) circle (0.05);
	\node[left] at (-.5, 3) {3};
	\draw[green,fill=green] (-.5,3) circle (0.05);
	\node[left] at (-.5, 2.5) {4};
	\draw[blue,fill=blue] (-.5,2.5) circle (0.05);
	\node[right] at (.5, 2) {5};
	\draw[red, fill=red] (.5,2) circle (0.05);
	\node[left] at (-.5, 1.5) {6};
	\draw[blue,fill=blue] (-.5,1.5) circle (0.05);
	\node[right] at (.5, 1) {7};
	\draw[red, fill=red] (.5,1) circle (0.05);
	\node[right] at (.5, .5) {8};
	\draw[red, fill=red] (.5,.5) circle (0.05);
	\node[left] at (-.5,0) {9};
	\draw[red, fill=red] (-.5,0) circle (0.05);
	\draw[thick, green] (-.5,4) -- (0,4) -- (0, 3) -- (-.5,3);
	\draw[thick, blue] (-.5,2.5) -- (-.25,2.5) -- (-.25, 1.5) -- (-.5,1.5);
	\draw[thick, red] (.5,.5) -- (0.25,.5) -- (0.25, 1) -- (.5,1);
	\draw[thick, red] (.5,2) -- (0,2) -- (0, 0) -- (-.5,0);
	\end{tikzpicture}
\end{align*}
In this case, drawing a horizontal line directly beneath $\max(V_1)$ will hit no spines in $\pi$ and $V_1 \subseteq \chi^{-1}(\{\ell\})$.
Let $V'_1 = \{1 = q_1 < q_2 < \cdots < q_p\}$ and $V_0 = \bigcup^m_{k=2} V_k$.
By Definition \ref{defn:recursivedefinitionofEpi} we may find $b_1, \ldots, b_{p-1} \in B$ depending only on $(T_1, \ldots, T_n)|_{V_1}$ and $\pi|_{V_1}$, so that writing $T'_{q_k} = T_{q_k}L_{b_k}$ we have
\[
E_\pi(T_1, \ldots, T_n)
= E\left(T'_{q_1} T'_{q_2} \cdots T'_{q_{p-1}}T_{q_p} L_{E_{\pi|_{V_0}}((T_1, \ldots, T_n)|_{V_0})} \right)
= E\left(T'_{q_1} T'_{q_2} \cdots T'_{q_{p-1}}T_{q_p} R_{E_{\pi|_{V_0}}((T_1, \ldots, T_n)|_{V_0})} \right).
\]
By the assumptions in this case, each $T_k \in \mathcal{A}_\ell$ for all $k \in V'_1$ and since right $B$-operators commute with elements of $\mathcal{A}_\ell$, we obtain
\begin{align*}
E_\pi(T_1, \ldots, T_n) &= E\left(T'_{q_1} T'_{q_2} \cdots T'_{q_p}R_{E_{\pi|_{V_0}}((T_1, \ldots, T_n)|_{V_0})} \right)  \\
 &= E\left(R_{E_{\pi|_{V_0}}((T_1, \ldots, T_n)|_{V_0})}T'_{q_1} T'_{q_2} \cdots T'_{q_p} \right) \\
 &= E(T'_{q_1} T'_{q_2} \cdots T'_{q_p}) E_{\pi|_{V_0}}((T_1, \ldots, T_n)|_{V_0}) \\
 &=  E_{\pi|_{V_1}}((T_1, \ldots, T_n)|_{V_1})E_{\pi|_{V_0}}((T_1, \ldots, T_n)|_{V_0}) \\
 &= \mathcal{E}_{\pi|_{V_1}}\left((T_1, \ldots, T_n)|_{V_1}\right) \mathcal{E}_{\pi|_{V_2}}\left((T_1, \ldots, T_n)|_{V_1}\right) \cdots \mathcal{E}_{\pi|_{V_m}}\left((T_1, \ldots, T_n)|_{V_m}\right)
 \end{align*}
with the last step following by the inductive hypothesis.

\underline{\textit{Case 2: There exists a $k \neq 1$ such that $\min(V_k) < \max(V_1)$.}}  In this case $V_m$ must terminate on the right above $\max(V_1)$; that is $\min(V_m) < \max(V_1)$ and $\chi(\min(V_m)) = r$.
We divide the proof into two further cases.

\underline{\textit{Case 2a: $\max(V_1) < \max(V_m)$.}}  As an example of this case, consider the following diagram where $V_1 = \{1, 3 \}$, $V_2 = \{4, 6, 7, 8, 9\}$, and $V_3 = \{2, 5\}$.
\begin{align*}
	\begin{tikzpicture}[baseline]
	\node[left] at (-.5, 4) {1};
	\draw[green,fill=green] (-.5,4) circle (0.05);
	\node[right] at (.5, 3.5) {2};
	\draw[blue,fill=blue] (.5,3.5) circle (0.05);
	\node[left] at (-.5, 3) {3};
	\draw[green,fill=green] (-.5,3) circle (0.05);
	\node[left] at (-.5, 2.5) {4};
	\draw[red,fill=red] (-.5,2.5) circle (0.05);
	\node[right] at (.5, 2) {5};
	\draw[blue, fill=blue] (.5,2) circle (0.05);
	\node[right] at (.5, 1.5) {6};
	\draw[red,fill=red] (.5,1.5) circle (0.05);
	\node[left] at (-.5, 1) {7};
	\draw[red, fill=red] (-.5,1) circle (0.05);
	\node[right] at (.5, .5) {8};
	\draw[red, fill=red] (.5,.5) circle (0.05);
	\node[left] at (-.5,0) {9};
	\draw[red, fill=red] (-.5,0) circle (0.05);
	\draw[thick, green] (-.5,4) -- (0,4) -- (0, 3) -- (-.5,3);
	\draw[thick, blue] (.5,3.5) -- (0.25,3.5) -- (0.25, 2) -- (.5,2);
	\draw[thick, red] (-.5,.0) -- (-0,0) -- (-0, 1) -- (-.5,1);
	\draw[thick, red] (.5,.5) -- (-0,.5);
	\draw[thick, red] (-.5,2.5) -- (0,2.5) -- (0, 1.5) -- (.5,1.5);
	\end{tikzpicture}
\end{align*}
Again $V_1 \subseteq \chi^{-1}(\{\ell\})$.
With the same conventions as above, by Definition \ref{defn:recursivedefinitionofEpi} we obtain
\[
E_\pi(T_1, \ldots, T_n) = E\left(T'_{q_1} T'_{q_2} \cdots T'_{q_{p_1}} R_{E_{\pi|_{V_0}}((T_1, \ldots, T_n)|_{V_0})} T'_{q_{p_1+1}} \cdots T'_{q_{p_2}}\right),
\]
where $p_1$ is the smallest element of $V_1'$ greater than $\min(V_m)$.
By the assumptions in this case, each $T_k \in \mathcal{A}_\ell$ for all $k \in V'_1$, and since right $B$-operators commute with elements of $\mathcal{A}_\ell$, one obtains
\begin{align*}
E_\pi(T_1, \ldots, T_n) &=  E\left(T'_{q_1} T'_{q_2} \cdots T'_{q_{p_1}} R_{E_{\pi|_{V_0}}((T_1, \ldots, T_n)|_{V_0})} T'_{q_{p_1+1}} \cdots T'_{q_{p_2}}\right) \\
 &= E\left(R_{E_{\pi|_{V_0}}((T_1, \ldots, T_n)|_{V_0})}T'_{q_1} T'_{q_2} \cdots T'_{q_{p_2}} \right) \\
 &= E(T'_{q_1} T'_{q_2} \cdots T'_{q_{p_2}}) E_{\pi|_{V_0}}((T_1, \ldots, T_n)|_{V_0}) \\
 &=  E_{\pi|_{V_1}}((T_1, \ldots, T_n)|_{V_1})E_{\pi|_{V_0}}((T_1, \ldots, T_n)|_{V_0})\\
&= \mathcal{E}_{\pi|_{V_1}}\left((T_1, \ldots, T_n)|_{V_1}\right) \mathcal{E}_{\pi|_{V_2}}\left((T_1, \ldots, T_n)|_{V_1}\right) \cdots \mathcal{E}_{\pi|_{V_m}}\left((T_1, \ldots, T_n)|_{V_m}\right),
\end{align*}
with the last step following by the inductive hypothesis.

\underline{\textit{Case 2b: $\max(V_1) > \max(V_m)$.}}  As an example of this case, consider the following diagram where $V_1 = \{1, 5 \}$, $V_2 = \{ 6, 8, 9\}$, $V_3 = \{4, 7\}$, and $V_4 = \{2, 3\}$.
\begin{align*}
	\begin{tikzpicture}[baseline]
	\node[left] at (-.5, 4) {1};
	\draw[green,fill=green] (-.5,4) circle (0.05);
	\node[right] at (.5, 3.5) {2};
	\draw[blue,fill=blue] (.5,3.5) circle (0.05);
	\node[right] at (.5, 3) {3};
	\draw[blue,fill=blue] (.5,3) circle (0.05);
	\node[right] at (.5, 2.5) {4};
	\draw[red,fill=red] (.5,2.5) circle (0.05);
	\node[left] at (-.5, 2) {5};
	\draw[green, fill=green] (-.5,2) circle (0.05);
	\node[left] at (-.5, 1.5) {6};
	\draw[purple,fill=purple] (-.5,1.5) circle (0.05);
	\node[right] at (.5, 1) {7};
	\draw[red, fill=red] (.5,1) circle (0.05);
	\node[right] at (.5, .5) {8};
	\draw[purple, fill=purple] (.5,.5) circle (0.05);
	\node[left] at (-.5,0) {9};
	\draw[purple, fill=purple] (-.5,0) circle (0.05);
	\draw[thick, green] (-.5,4) -- (-.1250,4) -- (-.1250, 2) -- (-.5,2);
	\draw[thick, blue] (.5,3.5) -- (0.125,3.5) -- (0.125, 3) -- (.5,3);
	\draw[thick, purple] (-.5,.0) -- (-0.125,0) -- (-0.125, 1.5) -- (-.5,1.5);
	\draw[thick, purple] (.5,.5) -- (-0.125,.5);
	\draw[thick, red] (.5,2.5) -- (0.125,2.5) -- (0.125, 1) -- (.5,1);
	\end{tikzpicture}
\end{align*}
Let $V_0 = \bigcup^{m-1}_{k=1} V_k$.
By Definition \ref{defn:recursivedefinitionofEpi} we may again find $T'_{q_1}, \ldots, T'_{q_{p_1}}$ and $S \in \A$ where $T'_k$ differs from $T_k$ by a left multiplication operator, so that
\[
E_\pi(T_1, \ldots, T_n) = E\left(T'_{q_1} T'_{q_2} \cdots T'_{q_{p_1}} R_{E_{\pi|_{V_m}}((T_1, \ldots, T_n)|_{V_m})} S\right). 
\]
Since right $B$-operators commute with elements of $\mathcal{A}_\ell$, one obtains
\begin{align*}
E_\pi(T_1, \ldots, T_n) &=  E\left(T'_{q_1} T'_{q_2} \cdots T'_{q_{p_1}} R_{E_{\pi|_{V_m}}((T_1, \ldots, T_n)|_{V_m})} S\right) \\
 &= E\left(R_{E_{\pi|_{V_m}}((T_1, \ldots, T_n)|_{V_m})}T'_{q_1} T'_{q_2} \cdots T'_{q_{p_1}}S \right) \\
 &= E(T'_{q_1} T'_{q_2} \cdots T'_{q_{p_1}}S) E_{\pi|_{V_m}}((T_1, \ldots, T_n)|_{V_m}) \\
 &=  E_{\pi|_{V_0}}((T_1, \ldots, T_n)|_{V_0})E_{\pi|_{V_m}}((T_1, \ldots, T_n)|_{V_m})\\
 &= \mathcal{E}_{\pi|_{V_1}}\left((T_1, \ldots, T_n)|_{V_1}\right) \mathcal{E}_{\pi|_{V_2}}\left((T_1, \ldots, T_n)|_{V_1}\right) \cdots \mathcal{E}_{\pi|_{V_m}}\left((T_1, \ldots, T_n)|_{V_m}\right)
  \end{align*}
with the last step following by the inductive hypothesis.
\end{proof}

\subsection{Verification of Property (iv) from Definition \ref{defnbimultiplicative} for $\mathcal{E}$}

We begin with the following intermediate step on the way to verifying that $\mathcal{E}$ satisfies Property (iv).

\begin{lem}
\label{lemreductionofbimultiplicativeinsideablockforE}
The operator-valued bi-free moment function $\mathcal{E}$ satisfies Property (iv) of Definition \ref{defnbimultiplicative} with the additional assumption that there exists a block $W_0 \subseteq W$ of $\pi$ such that 
\[
\theta, \gamma, \min_{\prec_\chi}(\{1,\ldots, n\}),  \max_{\prec_\chi}(\{1,\ldots, n\}) \in W_0.
\]
\end{lem}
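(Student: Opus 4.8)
The plan is to prove the displayed identity directly from the recursive Definition~\ref{defn:recursivedefinitionofEpi}, by induction on the number of blocks of $\pi$, peeling blocks off the bottom one at a time. First I would make two reductions. By the top--bottom/left--right symmetry of the construction I may assume $\chi(\theta)=\ell$ and prove only the equality collapsing onto $\theta$; the versions with $\chi(\theta)=r$ follow by consulting a mirror. Moreover, the additional hypothesis forces $\theta$ and $\gamma$ to be $\prec_\chi$-consecutive ribs of the single block $W_0$, with no node of $W$ lying $\prec_\chi$-between them; hence the two target expressions (collapse at $\theta$ versus at $\gamma$) are equal, since the scalar $c:=E_{\pi|_V}((T_1,\ldots,T_n)|_V)$ can be slid along the spine of $W_0$ from just after $T_\theta$ to just before $T_\gamma$ using Properties (i) and (ii) (Lemma~\ref{lemeasy}) together with the identity $E(TL_b)=E(TR_b)$ of Proposition~\ref{prop:propertiesofEforLX}. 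It therefore suffices to show that $E_\pi(T_1,\ldots,T_n)$ equals either one of them.

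For the inductive step, let $U^\ast$ be the block of $\pi$ with the largest minimum, i.e.\ the first block removed by the recursion, and distinguish cases by the position of $U^\ast$ relative to $\{1,\ldots,n\}=V\sqcup W$. The two benign cases are when $U^\ast\subseteq W$ and its collapse lands on another node of $W$, and when $U^\ast\subseteq V$ and its collapse lands on another block of $V$. In the former, the peel leaves $V$, $c$, $W_0$, $\theta$ and $\gamma$ untouched and is precisely the first step of the recursion computing $E_{\pi|_W}(\ldots,T_\theta L_c,\ldots)$, so the induction hypothesis applies; in the latter, the peel is a step of the recursion computing $c=E_{\pi|_V}$, after which the surrounding hypotheses persist---if necessary after using Property (iii) (Lemma~\ref{lemproductofdisjointblocksforE}) to split off a completed $\chi$-subinterval of $V$ so that the reduced configuration is again of the required form---and again induction applies. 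Here I would use that, because no node of $W$ lies $\prec_\chi$-between $\theta$ and $\gamma$, a block of $V$ is adjacent only to a spine of $V$ or to the spine of $W_0$, and no block of $W$ can collapse into $V$.

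The delicate case, which I expect to be the main obstacle, is when $U^\ast\subseteq V$ but $\min(U^\ast)$ is adjacent to the spine of $W_0$: the recursion then collapses this block of $V$ directly onto $W_0$ at the node $\gamma$, producing a factor $L_b T_\gamma$ or $R_b T_\gamma$ instead of keeping the value inside $V$. The trouble is that the bottom-up peeling order is governed by the integer minimum and does not respect the $V/W$ split, so the blocks of $V$ need not be exhausted among themselves before one is absorbed into $W_0$; one therefore cannot match the recursion for $E_\pi$ step-for-step against that for $c=E_{\pi|_V}$. The resolution is to show that all contributions arising from $V$-blocks accumulate on the segment of the spine of $W_0$ adjacent to $V$ and, using that each operator indexed by $V$ lies in $\mathcal{A}_{\chi(k)}$ and that right (resp.\ left) $B$-operators commute with $\mathcal{A}_\ell$ (resp.\ $\mathcal{A}_r$), may be commuted past the intervening operators and recombined; repeated use of $E(L_{b_1}R_{b_2}T)=b_1E(T)b_2$ and $E(TL_b)=E(TR_b)$ then collapses these factors to the single scalar $c$ placed at $\gamma$, which by the first reduction equals the asserted value at $\theta$. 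Verifying that this recombination is valid for every interleaving of the peeling order, while keeping track of the left/right bookkeeping when $V$ contains nodes of both colours, is where essentially all of the case analysis lies.
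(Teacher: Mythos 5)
Your core mechanism is the same one the paper uses: slide the contributions of $V$ next to $T_\theta$ using the commutation of $L_b$ (resp.\ $R_b$) with $\mathcal{A}_r$ (resp.\ $\mathcal{A}_\ell$) and the identity $E(TL_b)=E(TR_b)$, then recombine them into $L_{\mathcal{E}_{\pi|_V}((T_1,\ldots,T_n)|_V)}$ via Property (iii) (Lemma~\ref{lemreductionofbimultiplicativeinsideablockforE} in the paper cites Lemma~\ref{lemproductofdisjointblocksforE} for exactly this step). Where you differ is the scaffolding. The paper does not induct on blocks: it first partitions $V$ into $\chi$-intervals $V_1,\ldots,V_m$, each a union of blocks of $\pi$ with $\prec_\chi$-extremes in a common block, and then exploits the extra hypothesis in a way you do not---since $W_0$ contains $\min_{\prec_\chi}$ and $\max_{\prec_\chi}$, it contains the integer $1$, so $W_0$ is the \emph{last} block the recursion of Definition~\ref{defn:recursivedefinitionofEpi} touches. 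Hence $E_\pi(T_1,\ldots,T_n)$ can be written in fully unwound form as a single expectation of the word indexed by $W_0$, with the factors $X_k$ coming from the $V_k$ (and decorations from the other $W$-blocks) inserted at determined positions. Comparing only these final expressions makes the order in which blocks were peeled irrelevant, which is precisely what sidesteps the ``every interleaving of the peeling order'' analysis that dominates your delicate case; the cases that remain are only $\chi(\gamma)=\ell$ versus $\chi(\gamma)=r$ with $\theta\lessgtr\gamma$.

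Two concrete cautions about your version. First, in the delicate case the $V$-block does not generally collapse ``at the node $\gamma$'': the recursion attaches its value at the first element of $W_0$ integer-below where that block terminates, which can be any node of $W_0$ on the far side. For instance, with $\chi^{-1}(\{\ell\})=\{1,3,5\}$, $\chi^{-1}(\{r\})=\{2,4\}$, $\pi=\{\{1,2,4,5\},\{3\}\}$, $V=\{3\}$, $W=W_0=\{1,2,4,5\}$, one has $\theta=1$, $\gamma=5$, yet the recursion produces $E(T_1T_2L_{E(T_3)}T_4T_5)$, attaching the factor to $T_4$; only your weaker statement (the factor lands on the spine of $W_0$, separated from $T_\theta$ by operators of the opposite type) is true, and it is what the commutation needs. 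Second, and more structurally: after a delicate peel, the resulting configuration is no longer an instance of the lemma---the value of a $V$-block now decorates a $W$-node---so the induction hypothesis as you state it cannot be invoked. Repairing this requires either a strengthened hypothesis tracking such displaced factors, or abandoning step-by-step matching and arguing on the fully unwound expression; the latter is exactly the paper's proof, so your induction genuinely buys only the benign cases, which are immediate in the paper's organization as well.
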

\begin{proof}
We will present only the proof of the case $\chi(\theta) = \ell$ as the other cases are similar.

It is easy to see there exists a partition $V_1, \ldots, V_m$ of $V$ by $\chi$-intervals, ordered by $\prec_\chi$, with $V_k$ a union of blocks of $\pi$ such that $\min_{\prec_\chi}(V_k) \sim_\pi \max_{\prec_\chi}(V_k)$.
Recall that
\[
\theta := \max_{\prec_\chi}\left(\left\{k \in W  \, \mid \, k \prec_\chi \min_{\prec_\chi}(V)\right\}\right)
\qquad\text{and}\qquad
\gamma := \min_{\prec_\chi}\left(\left\{k \in W  \, \mid \, \max_{\prec_\chi}(V) \prec_\chi k\right\}\right),
\]
and so under $\prec_\chi$, $\theta$ immediately precedes $\min_{\prec_\chi}(V_1)$ and $\gamma$ immediately follows $\max_{\prec_\chi}(V_m)$.

The proof is now divided into three cases.

\underline{\textit{Case 1: $\chi(\gamma) = \ell$.}}  As an example of this case, consider the following diagram where $W = W_0 = \{1, 5, 9 \}$, $V_1 = \{2, 3 \}$, $V_2 = \{4, 6, 7, 8\}$, $\theta = 1$, and $\gamma = 9$.
\begin{align*}
	\begin{tikzpicture}[baseline]
	\node[left] at (-.5, 4) {1};
	\draw[green,fill=green] (-.5,4) circle (0.05);
	\node[left] at (-.5, 3.5) {2};
	\draw[blue,fill=blue] (-.5,3.5) circle (0.05);
	\node[left] at (-.5, 3) {3};
	\draw[blue,fill=blue] (-.5,3) circle (0.05);
	\node[left] at (-.5, 2.5) {4};
	\draw[red,fill=red] (-.5,2.5) circle (0.05);
	\node[right] at (.5, 2) {5};
	\draw[green,fill=green] (.5,2) circle (0.05);
	\node[left] at (-.5, 1.5) {6};
	\draw[red,fill=red] (-.5,1.5) circle (0.05);
	\node[left] at (-.5, 1) {7};
	\draw[red,fill=red] (-.5,1) circle (0.05);
	\node[left] at (-.5, .5) {8};
	\draw[red,fill=red] (-.5,.5) circle (0.05);
	\node[left] at (-.5,0) {9};
	\draw[green,fill=green] (-.5,0) circle (0.05);
	\draw[thick, green] (-.5,4) -- (0,4) -- (0, 0) -- (-.5,0);
	\draw[thick, green] (0,2) -- (.5,2);
	\draw[thick, blue] (-.5,3.5) -- (-.25,3.5) -- (-.25, 3) -- (-.5,3);
	\draw[thick, red] (-.5,2.5) -- (-.25,2.5) -- (-.25, .5) -- (-.5,.5);
	\draw[thick, red] (-.5,1.5) -- (-0.25,1.5);
	\end{tikzpicture}
\end{align*}

In this case $V_k \subseteq \chi^{-1}(\{\ell\})$ for all $k \in \{1,\ldots, m\}$.
Write $X_k = L_{E_{\pi|_{V_k}}((T_1, \ldots, T_n)|_{V_k})}$ and $W_0 = \{q_1 < q_2 < \cdots < q_{k_{m+1}}\}$.
Then
\[
E_\pi(T_1, \ldots, T_n) = E\left( T'_{q_1} T'_{q_2} \cdots T'_{q_{k_1}} X_1 T'_{q_{k_1+1}} \cdots T'_{q_{k_m}} X_m  T'_{q_{k_m+1}} \cdots T'_{q_{k_{m+1}}}  \right),
\]
where $T_k'$ is $T_k$, potentially multiplied on the left and/or right by appropriate $L_b$ and $R_b$.
Here $T_\theta$ appears left of $X_1$, $\gamma = q_{k_{m+1}}$, and every operator between the two is either some $X_k$ or a right operator.
Hence, by the commutation of left $B$-operators with elements of $\mathcal{A}_r$, we obtain
\[
E_\pi(T_1, \ldots, T_n) = E\left( T'_{q_1} \cdots T'_{q_{j-1}}   R_{b}L_{b'}\left(T_{\theta} X_1 X_2 \cdots X_m\right) R_{b''} T'_{q_{j+1}} \cdots T'_{q_{k_{m+1}}}  \right)
\]
for some $b,b',b'' \in B$.  Since
\[
\mathcal{E}_{\pi|_{V_1}}((T_1, \ldots, T_n)|_{V_1}) \cdots \mathcal{E}_{\pi|_{V_m}}((T_1, \ldots, T_n)|_{V_m}) = \mathcal{E}_{\pi|_{V}}((T_1, \ldots, T_n)|_{V}),
\]
by Lemma \ref{lemproductofdisjointblocksforE} we have
\begin{align*}
\mathcal{E}_\pi(T_1, \ldots, T_n) &= E\left(  T'_{q_1} \cdots T'_{q_{j-1}}   R_{b}L_{b'}\left(T_{\theta} L_{\mathcal{E}_{\pi|_{V}}((T_1, \ldots, T_n)|_{V})}\right) R_{b''} T'_{q_{j+1}} \cdots T'_{q_{k_{m+1}}} \right) \\
&= \mathcal{E}_{\pi|_{W}}\left(\left.\left(T_1, \ldots, T_{\theta-1}, T_\theta L_{\mathcal{E}_{\pi|_{V}}\left((T_1,\ldots, T_n)|_{V}\right)}, T_{\theta+1}, \ldots, T_n\right)\right|_{W}\right)
\end{align*}
where the last step follows as $\mc E_{\pi |_W}$ ignores arguments corresponding to $V$.

\underline{\textit{Case 2: $\chi(\gamma) = r$ and $\theta < \gamma$.}}  As an example of this case, consider the following diagram where $W = W_0 = \{1, 3, 6 \}$, $V_1 = \{2, 4 \}$, $V_2 = \{5,  8\}$, $V_3 = \{7, 9\}$, $\theta = 1$, and $\gamma = 6$.
\begin{align*}
	\begin{tikzpicture}[baseline]
	\node[left] at (-.5, 4) {1};
	\draw[green,fill=green] (-.5,4) circle (0.05);
	\node[left] at (-.5, 3.5) {2};
	\draw[blue,fill=blue] (-.5,3.5) circle (0.05);
	\node[right] at (.5, 3) {3};
	\draw[green,fill=green] (.5,3) circle (0.05);
	\node[left] at (-.5, 2.5) {4};
	\draw[blue,fill=blue] (-.5,2.5) circle (0.05);
	\node[left] at (-.5, 2) {5};
	\draw[red,fill=red] (-.5,2) circle (0.05);
	\node[right] at (.5, 1.5) {6};
	\draw[green,fill=green] (.5,1.5) circle (0.05);
	\node[right] at (.5, 1) {7};
	\draw[purple,fill=purple] (.5,1) circle (0.05);
	\node[left] at (-.5, .5) {8};
	\draw[red,fill=red] (-.5,.5) circle (0.05);
	\node[left] at (-.5,0) {9};
	\draw[purple,fill=purple] (-.5,0) circle (0.05);
	\draw[thick, green] (-.5,4) -- (0.25,4) -- (0.25, 1.5) -- (.5,1.5);
	\draw[thick, green] (0.25,3) -- (.5,3);
	\draw[thick, blue] (-.5,3.5) -- (0,3.5) -- (0, 2.5) -- (-.5,2.5);
	\draw[thick, red] (-.5,2) -- (-.25,2) -- (-.25, .5) -- (-.5,.5);
	\draw[thick, purple] (.5,1) -- (-0,1) -- (-0, 0) -- (-.5,0);
	\end{tikzpicture}
\end{align*}

Since $\chi(\theta) = \ell$ and $\chi(\gamma) = r$ there exists a $p \in \{1,\ldots, m\}$ such that $\min(V_k) > \gamma$ if and only if $k > p$, $V_k \subseteq \chi^{-1}(\{\ell\})$ for all $k < p$, and $\chi(\min(V_p)) = \ell$.  Let
\[
Y = \left\{
\begin{array}{ll}
\bigcup_{k \geq p} V_k & \mbox{if } p < m \mbox{ and } \min(V_{p+1}) < \max(V_p)  \\
\bigcup_{k > p} V_k & \mbox{otherwise}
\end{array} \right..
\]
For example, $Y = \{5, 7, 8, 9\}$ in the above diagram.

Let us deal with the first possible case for $Y$, with the second following similarly.
In this case $V_k \subseteq \chi^{-1}(\{\ell\})$ for all $k \in \{1,\ldots, p-1\}$.
Write $X_k = L_{E_{\pi|_{V_k}}((T_1, \ldots, T_n)|_{V_k})}$ and $W_0 = \{q_1 < q_2 < \cdots < q_{k_{p+1}}\}$.
Then
\begin{align*}
E_\pi(T_1, \ldots, T_n) = E\left( T'_{q_1} \cdots T'_{q_{k_1}} X_1 T'_{q_{k_1+1}} \cdots T'_{q_{k_{p-1}}} X_{p-1}  T'_{q_{k_{p-1}+1}} \cdots T'_{q_{k_{p}}} L_{E_{\pi|_{Y}}((T_1, \ldots, T_n)|_{Y})} T'_{q_{k_{p}+1}} \cdots T'_{q_{k_{p+1}}} \right)
\end{align*}
where $T_k'$ is $T_k$, potentially multiplied on the left and/or right by appropriate $L_b$ and $R_b$.
Here $T_\theta$ appears left of $X_1$, $\gamma = q_{k_{p+1}}$, and every operator between the two is either some $X_k$, $L_{E_{\pi|_{Y}}((T_1, \ldots, T_n)|_{Y})}$, or a right operator.
Hence, by the commutation of left $B$-operators with elements of $\mathcal{A}_r$, we obtain
\begin{align*}
E_\pi(T_1, \ldots, T_n) = E\left( T'_{q_1} \cdots T'_{q_{j-1}}R_{b} L_{b'} \left(T_{\theta} X_1 \cdots X_p L_{E_{\pi|_{Y}}((T_1, \ldots, T_n)|_{Y})} \right) R_{b''} T'_{q_{j+1}}  \cdots T'_{q_{k_{p+1}}} \right)
\end{align*}
for some $b, b', b'' \in B$.  Since 
\[
E_{\pi|_{V_1}}((T_1, \ldots, T_n)|_{V_1}) \cdots E_{\pi|_{V_p}}((T_1, \ldots, T_n)|_{V_p}) E_{\pi|_{Y}}((T_1, \ldots, T_n)|_{Y}) = E_{\pi|_{V}}((T_1,\ldots, T_n)|_{V}),
\]
by Lemma \ref{lemproductofdisjointblocksforE} we have
\begin{align*}
\mathcal{E}_\pi(T_1, \ldots, T_n) &= E\left(  T'_{q_1} \cdots T'_{q_{j-1}}   R_{b}L_{b'}\left(T_{\theta} L_{\mathcal{E}_{\pi|_{V}}((T_1, \ldots, T_n)|_{V})}\right) R_{b''} T'_{q_{j+1}} \cdots T'_{q_{k_{p+1}}} \right) \\
&= \mathcal{E}_{\pi|_{W}}\left(\left.\left(T_1, \ldots, T_{\theta-1}, T_\theta L_{\mathcal{E}_{\pi|_{V}}\left((T_1,\ldots, T_n)|_{V}\right)}, T_{\theta+1}, \ldots, T_n\right)\right|_{W}\right),
\end{align*}
where the last step follows as $\mc E_{\pi |_W}$ ignores arguments corresponding to $V$.

\underline{\textit{Case 3: $\chi(\gamma) = r$ and $\theta > \gamma$.}}  This case is a reflection of the second case plus a small argument.  Since $\chi(\theta) = \ell$ and $\chi(\gamma) = r$ there exists a $p \in \{1,\ldots, m\}$ such that $\min(V_k) > \gamma$ if and only $k < p$, $V_k \subseteq \chi^{-1}(\{r\})$ for all $k > p$, and $\chi(\min(V_p)) = r$.  Let
\[
Y = \left\{
\begin{array}{ll}
\bigcup_{k \leq p} V_k & \mbox{if } p > 1 \mbox{ and } \min(V_{p-1}) < \max(V_p)  \\
\bigcup_{k < p} V_k & \mbox{otherwise}
\end{array} \right. .
\]
Let us deal with the second possible case for $Y$, with the first following similarly.
In this case $V_k \subseteq \chi^{-1}(\{\ell\})$ for all $k \in \{p+1,\ldots, m\}$.
Write $X_k = R_{E_{\pi|_{V_k}}((T_1, \ldots, T_n)|_{V_k})}$ and $W_0 = \{q_1 < q_2 < \cdots < q_{k_{m-p+2}}\}$.
Then
\begin{align*}
E_\pi(T_1, \ldots, T_n) = E\left( T'_{q_1} \cdots T'_{q_{k_1}} X_m T'_{q_{k_1+1}} \cdots T'_{q_{k_{m-p+1}}} X_{p}  T'_{q_{k_{m-p+1}+1}} \cdots T'_{q_{k_{m-p+2}}} R_{E_{\pi|_{Y}}((T_1, \ldots, T_n)|_{Y})} \right)
\end{align*}
where $T_k'$ is $T_k$, potentially multiplied on the left and/or right by appropriate $L_b$ and $R_b$.
Here $\theta = q_{k_{m-p+2}}$, $T_\gamma$ occurs left of $X_m$, and every operator between the two is either some $X_k$ or a left operator.
Hence, by the commutation of right $B$-operators with elements of $\mathcal{A}_\ell$, one obtains
\begin{align*}
E_\pi(T_1, \ldots, T_n) &= E\left( T'_{q_1} \cdots T'_{q_{k_{m-p+2}-1}} L_{b} T_\theta X_m \cdots X_{p} R_{E_{\pi|_{Y}}((T_1, \ldots, T_n)|_{Y})} \right)\\
&= E\left(  T'_{q_1} \cdots T'_{q_{jk_{m-p+2}-1}} L_{b} T_\theta  R_{E_{\pi|_{Y}}((T_1, \ldots, T_n)|_{Y})E_{\pi|_{V_{p}}}((T_1, \ldots, T_n)|_{V_{p}})    \cdots E_{\pi|_{V_m}}((T_1, \ldots, T_n)|_{V_m})}\right)\\
&= E\left(  T'_{q_1} \cdots T'_{q_{jk_{m-p+2}-1}} L_{b} T_\theta  L_{E_{\pi|_{Y}}((T_1, \ldots, T_n)|_{Y})E_{\pi|_{V_{p}}}((T_1, \ldots, T_n)|_{V_p})    \cdots  E_{\pi|_{V_m}}((T_1, \ldots, T_n)|_{V_m})}\right)
\end{align*}
for some $b \in B$.  Since 
\[
E_{\pi|_{Y}}((T_1, \ldots, T_n)|_{Y})E_{\pi|_{V_{p}}}((T_1, \ldots, T_n)|_{V_{p}})    \cdots  E_{\pi|_{V_m}}((T_1, \ldots, T_n)|_{V_m}) = E_{\pi|_{V}}((T_1,\ldots, T_n)|_{V})
\]
by Lemma \ref{lemproductofdisjointblocksforE}, we have
\begin{align*}
\mathcal{E}_\pi(T_1, \ldots, T_n) &= E\left( T'_{q_1} \cdots T'_{q_{jk_{m-p+2}-1}} L_{b'} T_\theta   L_{\mathcal{E}_{\pi|_{V}}((T_1, \ldots, T_n)|_{V})} \right) \\
&= \mathcal{E}_{\pi|_{W}}\left(\left.\left(T_1, \ldots, T_{\theta-1}, T_\theta L_{\mathcal{E}_{\pi|_{V}}\left((T_1,\ldots, T_n)|_{V}\right)}, T_{\theta+1}, \ldots, T_n\right)\right|_{W}\right)
\end{align*}
where the last step follows as $\mc E_{\pi |_W}$ ignores arguments corresponding to $V$.
\end{proof}

In addition to the above, we will need to verify slightly enhanced versions of Properties (i) and (ii) of Definition \ref{defnbimultiplicative} for $\mathcal{E}$.

\begin{lem}
\label{lemenhancedpropertyiforE}
The operator-valued bi-free moment function $\mathcal{E}$ satisfies the $q = -\infty$ case of Property (i) of Definition \ref{defnbimultiplicative} when $1_\chi$ is replaced with an arbitrary $\pi \in BNC(\chi)$.
\end{lem}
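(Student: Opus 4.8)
The plan is to prove the two identities comprising the $q = -\infty$ case of Property (i) — namely, for $\chi \equiv \ell$ (all left) that $\mathcal{E}_\pi(T_1, \ldots, T_{n-1}, T_n L_b) = \mathcal{E}_\pi(T_1, \ldots, T_n)\,b$, and for $\chi \equiv r$ (all right) that $\mathcal{E}_\pi(T_1, \ldots, T_{n-1}, T_n R_b) = b\,\mathcal{E}_\pi(T_1, \ldots, T_n)$ — by induction on $n$, following the recursion of Definition \ref{defn:recursivedefinitionofEpi}. I will carry out the $\chi \equiv \ell$ case in detail; the $\chi \equiv r$ case is symmetric, the only change being that $b \mapsto R_b$ is an anti-homomorphism (so $b$ emerges on the opposite side) in place of the homomorphism $b \mapsto L_b$.

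The base case $\pi = 1_\chi$ is precisely Lemma \ref{lemeasy}: writing $S = T_1 \cdots T_n$, the second defining identity of Definition \ref{defn:BBncps} gives $E(S L_b) = E(S R_b)$, and since every $T_k \in \mathcal{A}_\ell$ commutes with $R_b$ we obtain $E(S R_b) = E(R_b S) = E(S) b$, which is the desired equality for $1_\chi$. For the inductive step, let $V$ be the block peeled off by the recursion (the one with $\min(V)$ largest), and observe that the appended factor $L_b$ is attached to $T_n$, the node at the very bottom, which is the natural maximum of $\{1, \ldots, n\}$. A short non-crossing argument (valid since $s_\chi^{-1} \cdot \pi \in NC(n)$, and for $\chi \equiv \ell$ this is $\pi$ itself) shows that if $n \in V$ then $V$ is a contiguous final block $\{k+1, \ldots, n\}$: otherwise a node $j \notin V$ with $\min(V) < j < n$ would lie in a block whose minimum is forced strictly below $\min(V)$, producing a crossing. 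Hence $n \in V$ places us in the first bullet of Definition \ref{defn:recursivedefinitionofEpi}, while $n \in V^c$ places us in the second.

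In the second bullet, $T_n L_b$ is the last entry of the tuple indexed by $V^c$ (possibly after the recursion has \emph{prepended} some $L_c$ to it, which does not disturb the trailing $L_b$), so the inductive hypothesis applied to $\mathcal{E}_{\pi|_{V^c}}$ extracts $b$ on the right, using that the modified operators still lie in $\mathcal{A}_\ell$. The genuinely delicate case — which I expect to be the main obstacle — is the first bullet, where $T_n$ is absorbed into the inner evaluation $E_{\pi|_V}$. Here I would first apply the inductive hypothesis to get $E_{\pi|_V}(T_{k+1}, \ldots, T_n L_b) = c\,b$, where $c := E_{\pi|_V}(T_{k+1}, \ldots, T_n)$; then use that $b \mapsto L_b$ is a homomorphism to rewrite $L_{cb} = L_c L_b$, so that the recursion now produces $T_k L_c L_b$ as the last entry of the $V^c$-tuple.

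A second application of the inductive hypothesis to $\mathcal{E}_{\pi|_{V^c}}$ — legitimate since $T_k L_c \in \mathcal{A}_\ell$ — then extracts $b$ on the right and yields $\mathcal{E}_\pi(T_1, \ldots, T_n)\,b$, completing the step. The care required is essentially bookkeeping: tracking that the $b$ pulled out of the inner evaluation reattaches as a trailing $L_b$ at the rightmost position of the reduced tuple, and checking at each stage that all intermediate operators remain in $\mathcal{A}_\ell$. For $\chi \equiv r$ the same two-step argument applies verbatim with $R$ in place of $L$; there the anti-homomorphism identity $R_{bc} = R_c R_b$ is exactly what forces $b$ to emerge on the left, matching the $\chi(n) = r$, $q = -\infty$ clause of Property (i).
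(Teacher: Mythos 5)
Your proof is correct, but it takes a genuinely different route from the paper's. The paper does not induct on the recursion: it first invokes Lemma \ref{lemproductofdisjointblocksforE} (Property (iii), established just before) to reduce to the case where $\{1,\ldots,n\}$ is a single $\chi$-interval whose endpoints $1$ and $n$ lie in the same block of $\pi$; in that situation the recursion of Definition \ref{defn:recursivedefinitionofEpi} unwinds completely into a single expectation $E\left(T_{q_1}L_{b_1}T_{q_2}L_{b_2}\cdots T_{q_p}L_{b_p}T_{q_{p+1}}\right)$, where $\{q_1<\cdots<q_{p+1}\}$ is that block and the $b_j$ depend only on the entries outside it (so they are unchanged when $T_n$ is replaced by $T_nL_b$). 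The identity then follows from one global commutation pass, $E(\,\cdot\,)b = E(R_b\,\cdot\,) = E(\,\cdot\,R_b) = E(\,\cdot\,L_b)$, using that $R_b$ commutes with $\mathcal{A}_\ell$ and with each $L_{b_j}$, together with the property $E(TR_b)=E(TL_b)$ from Definition \ref{defn:BBncps}. You instead run a structural induction on Definition \ref{defn:recursivedefinitionofEpi} itself, confining the commutation trick to the base case (Lemma \ref{lemeasy}) and propagating the trailing $L_b$ through the recursion via the homomorphism identity $L_{cb}=L_cL_b$ (and, on the right side, the anti-homomorphism identity $R_{bc}=R_cR_b$, which is indeed what places $b$ on the left). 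Your dichotomy is sound: if $n$ lies in the peeled block $V$, non-crossingness forces $V=\{k+1,\ldots,n\}$, and otherwise $n\in V^c$ so the trailing entry survives into the reduced tuple; and your observation that the modified entries $T_kL_c$ and $L_cT_k$ remain in $\mathcal{A}_\ell$ is exactly what licenses re-applying the inductive hypothesis. As for what each approach buys: yours is self-contained, needing neither Property (iii) nor the explicit unwound form of $E_\pi$, at the cost of a case analysis; the paper's is shorter given that Lemma \ref{lemproductofdisjointblocksforE} is already in hand, and it isolates in one place the single algebraic mechanism (sliding $R_b$ across left operators under $E$) that drives all identities of this kind.
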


\begin{proof}
We will assume $\chi(n) = \ell$ as the case where $\chi(n) = r$ will follow \emph{mutatis mutandis}.
In the case $\chi(n) = \ell$, it is easy to see that $\chi \equiv \ell$.
Let $V_1, \ldots, V_m$ be a partition of $\set{1, \ldots, n}$ into $\chi$-intervals ordered by $\prec_\chi$ with each $V_k$ a union of blocks of $\pi$, such that $\min_{\prec_\chi}(V_k) \sim_\pi \max_{\prec_\chi}(V_k)$, and let $V'_k \in \pi$ be the block containing them.
By Lemma \ref{lemproductofdisjointblocksforE}, we may reduce to the case where $m = 1$.

Writing $V_1' = \set{1 = q_1 < q_2 < \cdots < q_{p+1} = n}$, for some $b_j \in B$ depending only on $(T_1, \ldots, T_n)|_{(V'_1)^c}$ and on $\pi$,
\[
E_\pi(T_1, \ldots, T_n) = E\left( T_{q_1} L_{b_1} T_{q_2} L_{b_2} \cdots T_{q_p} L_{b_p} T_{q_{p+1}}\right).
\]
Hence, by the commutation of right $B$-operators with elements of $\mathcal{A}_\ell$, we obtain
\begin{align*}
\mathcal{E}_\pi(T_1, \ldots, T_n)b & = E\left( T_{q_1} L_{b_1} T_{q_2} L_{b_2} \cdots T_{q_p} L_{b_p} T_{q_{p+1}}\right)b\\
& = E\left( R_b T_{q_1} L_{b_1} T_{q_2} L_{b_2} \cdots T_{q_p} L_{b_p} T_{q_{p+1}}\right)\\
& = E\left( T_{q_1} L_{b_1} T_{q_2} L_{b_2} \cdots T_{q_p} L_{b_p} T_{q_{p+1}}R_b \right)\\
& = E\left( T_{q_1} L_{b_1} T_{q_2} L_{b_2} \cdots T_{q_p} L_{b_p} T_{q_{p+1}}L_b \right)\\
& = \mathcal{E}_\pi(T_1, \ldots, T_n L_b).\qedhere
\end{align*}
\end{proof}

\begin{lem}
\label{lemenhancedpropertyiiforE}
The operator-valued bi-free moment function $\mathcal{E}$ satisfies the $q = -\infty$ case of Property (ii) of Definition \ref{defnbimultiplicative} when $1_\chi$ is replaced with an arbitrary $\pi \in BNC(\chi)$.
\end{lem}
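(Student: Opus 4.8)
The plan is to mirror the proof of Lemma \ref{lemenhancedpropertyiforE}. By a left--right reflection it suffices to treat the case $\chi(p) = \ell$, the case $\chi(p) = r$ following \emph{mutatis mutandis}. Since $q = -\infty$ means there is no left node before $p$, the index $p$ is the smallest element of $\chi^{-1}(\{\ell\})$, and hence $p = \min_{\prec_\chi}(\{1,\ldots,n\})$ is the top node of the left spine; in particular $T_1, \ldots, T_{p-1} \in \mathcal{A}_r$, since every index below $p$ is a right node.

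First I would reduce to an irreducible partition. Choose a partition of $\{1,\ldots,n\}$ into $\chi$-intervals $V_1, \ldots, V_m$ ordered by $\prec_\chi$, each a union of blocks of $\pi$ with $\min_{\prec_\chi}(V_k) \sim_\pi \max_{\prec_\chi}(V_k)$. Since $p = \min_{\prec_\chi}$ lies in $V_1$, replacing $T_p$ by $L_b T_p$ alters only the first factor of the product provided by Property (iii) (Lemma \ref{lemproductofdisjointblocksforE}); as $p$ remains the smallest left index of $V_1$, it suffices to prove the claim for $\pi|_{V_1}$. Thus I may assume $m = 1$, so that $\min_{\prec_\chi}(\{1,\ldots,n\}) \sim_\pi \max_{\prec_\chi}(\{1,\ldots,n\})$. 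Writing $V_1'$ for the block of $\pi$ containing $p$ and $\max_{\prec_\chi}$, one checks $\min(V_1') = 1$, so $V_1'$ is exactly the block that survives to the base case of the recursion in Definition \ref{defn:recursivedefinitionofEpi}.

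Next I would unfold that recursion to write $E_\pi(T_1, \ldots, T_n) = E(\mathrm{word})$, where the genuine operators $T_r$ with $r \in V_1'$ appear in increasing order of $r$ and each block $V$ directly nested in $V_1'$ contributes a single factor $L_{\mathcal{E}_{\pi|_V}((T_1,\ldots,T_n)|_V)}$ or $R_{\mathcal{E}_{\pi|_V}((T_1,\ldots,T_n)|_V)}$ according as $\chi(\min(V)) = \ell$ or $r$, all deeper blocks having been absorbed into these values. The key structural claim is that every factor of the word lying to the left of $T_p$ commutes with $L_b$: the genuine operators there are $T_k$ with $k < p$, hence lie in $\mathcal{A}_r$ and commute with $L_b$; and an inserted factor lies to the left of $T_p$ exactly when its block $V$ has $\min(V) < p$, in which case $\min(V)$ is a right node (as $p$ is the smallest left index), so the factor is of type $R$ and again commutes with $L_b$. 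A factor produced by the suffix rule of Definition \ref{defn:recursivedefinitionofEpi} attaches after a node of index $\geq p$, hence lies to the right of $T_p$. Granting the claim, I commute $L_b$ to the front of the word and apply $E(L_b \,\cdot\,) = b\,E(\cdot)$ from Definition \ref{defn:BBncps} to conclude $\mathcal{E}_\pi(T_1, \ldots, L_b T_p, \ldots, T_n) = b\,\mathcal{E}_\pi(T_1, \ldots, T_n)$.

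The main obstacle is precisely this structural claim: that no $L$-type factor and no genuine left operator can appear to the left of $T_p$ in the word. It rests on the observation that, $p$ being the topmost left node, the attachment node $k_V$ of any directly-nested block $V$ lands at or before $T_p$ only when $\min(V) < p$, forcing $\min(V)$ into the block of right indices $1, \ldots, p-1$. Once this is established, the $\chi(p) = r$ case is the exact reflection: there $p = \max_{\prec_\chi}$, every factor to the left of $T_p$ is a left operator or an $L$-type factor and so commutes with $R_b$, and moving $R_b$ to the front and invoking $E(R_b\,\cdot\,) = E(\cdot)\,b$ gives $\mathcal{E}_\pi(\ldots, R_b T_p, \ldots) = \mathcal{E}_\pi(\ldots)\,b$.
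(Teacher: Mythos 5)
Your proof is correct and takes essentially the same route as the paper's: reduce to a single $\chi$-interval via Lemma \ref{lemproductofdisjointblocksforE}, unfold the recursion of Definition \ref{defn:recursivedefinitionofEpi} into a word in which everything to the left of $T_p$ is a right operator or an $R$-type inserted factor, then commute $L_b$ to the front and apply $E(L_b\,\cdot\,) = b\,E(\,\cdot\,)$. The only difference is one of detail: the paper simply asserts the word form $E\left(T_{q_1}R_{b_1}\cdots T_{q_z}R_{b_z}T_pS\right)$, whereas you spell out why no $L$-type factor (including those from the suffix rule) can land to the left of $T_p$.
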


\begin{proof}
We will assume $\chi(p) = \ell$ as the case where $\chi(p) = r$ will follow \emph{mutatis mutandis}.
Let $V_1, \ldots, V_m$ be a partition of $\set{1, \ldots, n}$ into $\chi$-intervals ordered by $\prec_\chi$ with each $V_k$ a union of blocks of $\pi$, such that $\min_{\prec_\chi}(V_k) \sim_\pi \max_{\prec_\chi}(V_k)$, and let $V'_k \in \pi$ be the block containing them.
By definitions, notice $p \in V'_1$.  Thus Lemma \ref{lemproductofdisjointblocksforE} implies we may reduce to the case where $m = 1$.

Writing $V_1' = \set{q_1 < q_2 < \cdots < q_{k}}$, for some $b_j \in B$ depending only on $(T_1, \ldots, T_n)|_{(V'_1)^c}$ and on $\pi$, and for some $S \in \A$,
\[
\mathcal{E}_\pi(T_1, \ldots, T_n) = E\left( T_{q_1} R_{b_1}  \cdots T_{q_z} R_{b_z} T_p S\right)
\]
for some $z < k$.  
Hence, by the commutation of left $B$-operators with elements of $\mathcal{A}_r$, we obtain
\begin{align*}
b\mathcal{E}_\pi(T_1, \ldots, T_n) & =bE\left( T_{q_1} R_{b_1}  \cdots T_{q_z} R_{b'_z} T_p S\right)\\
& =E\left(L_bT_{q_1} R_{b_1}  \cdots T_{q_z} R_{b_z} T_p S\right)\\
& = E\left( T_{q_1} R_{b_1}  \cdots T_{q_z} R_{b_z} L_bT_p S\right)\\
& = \mathcal{E}_\pi(T_1, \ldots, T_{p-1}, L_b T_p, T_{p+1}, \ldots, T_n).  \qedhere
\end{align*}  
\end{proof}
\begin{lem}
\label{lemfullreductionofproductsinvolvingblocksinsideblocks}
The operator-valued bi-free moment function $\mathcal{E}$ satisfies Property (iv) of Definition \ref{defnbimultiplicative}.
\end{lem}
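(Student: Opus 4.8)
The plan is to establish Property (iv) for $\mathcal{E}$ by induction on $n$, leaning on the already-proved Properties (i)--(iii) for $\mathcal{E}$ (Lemmas \ref{lemeasy}, \ref{lemproductofdisjointblocksforE}), the special case of Property (iv) (Lemma \ref{lemreductionofbimultiplicativeinsideablockforE}), and the enhanced $q=-\infty$ versions of (i),(ii) (Lemmas \ref{lemenhancedpropertyiforE}, \ref{lemenhancedpropertyiiforE}). Since $V$ is nonempty we have $|W|<n$, so the induction hypothesis gives full bi-multiplicativity of $\mathcal{E}$ on all tuples of length $<n$; in particular $\mathcal{E}_{\pi|_W}$ satisfies Properties (i),(ii) for arbitrary partitions by Proposition \ref{propenhancedproperties}. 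This last point immediately reconciles the two displayed expressions in Property (iv): in $\pi|_W$ the indices $\theta$ and $\gamma$ become $\prec_\chi$-consecutive, since everything strictly between them lies in $V$ and has been removed, so moving the inserted $B$-value from the $\theta$-slot to the $\gamma$-slot is precisely an instance of (i)/(ii) for $\pi|_W$. It therefore suffices to prove that $\mathcal{E}_\pi(T_1,\dots,T_n)$ equals the first ($\theta$-insertion) expression.

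First I would reduce to the case where $\{1,\dots,n\}$ is $\chi$-interval-irreducible, meaning it cannot be partitioned into two nonempty $\chi$-intervals each a union of blocks of $\pi$. If such a splitting exists, I would arrange, by refining around $V$ exactly as in the opening reduction of Lemma \ref{lemproductofdisjointblocksforE} so that $\min_{\prec_\chi}(V)\sim_\pi\max_{\prec_\chi}(V)$ on each piece, a $\chi$-interval decomposition in which $V$ lies inside a single piece $U$ with $\min_{\prec_\chi}(U),\max_{\prec_\chi}(U)\in W$. Property (iii) then factors both $\mathcal{E}_\pi$ and the target expression as products over the pieces; the factors disjoint from $V$ coincide term by term, and the factor containing $V$ is a strictly smaller instance of Property (iv), handled by the induction hypothesis.

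In the irreducible case, a standard property of bi-non-crossing partitions forces the block $W_0$ containing $\min_{\prec_\chi}(\{1,\dots,n\})$ to also contain $\max_{\prec_\chi}(\{1,\dots,n\})$, and both of these lie in $W$ by hypothesis, so $W_0\subseteq W$. If moreover $\theta,\gamma\in W_0$, then Lemma \ref{lemreductionofbimultiplicativeinsideablockforE} applies verbatim and finishes the argument. Otherwise $V$ sits strictly inside one of the gaps of $W_0$, bounded by two $\prec_\chi$-consecutive nodes of $W_0$; enlarging $V$ to the $\chi$-interval $\tilde V$ filling that entire gap makes its bounding indices $\tilde\theta,\tilde\gamma$ consecutive nodes of $W_0$. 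Lemma \ref{lemreductionofbimultiplicativeinsideablockforE} applied to $(\tilde V,W)$ then inserts $\mathcal{E}_{\pi|_{\tilde V}}$ at $\tilde\theta$, and since $\tilde V$ is strictly smaller the induction hypothesis (Property (iv) for $\pi|_{\tilde V}$) rewrites $\mathcal{E}_{\pi|_{\tilde V}}$ as an insertion of $\mathcal{E}_{\pi|_V}$ into the remaining $W$-indices lying in $\tilde V$. The enhanced Properties (i),(ii) (Lemmas \ref{lemenhancedpropertyiforE}, \ref{lemenhancedpropertyiiforE}) relocate the resulting $B$-factor to the $\theta$-slot, and reassembling the nested expression using bi-multiplicativity of $\mathcal{E}_{\pi|_W}$ (induction hypothesis, $|W|<n$) yields exactly the first displayed expression.

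I expect the main obstacle to be the bookkeeping in this last, nested irreducible case: one must track the single $B$-valued factor $\mathcal{E}_{\pi|_V}$ as it is carried through each layer of the block structure and confirm that it lands precisely on $T_\theta$, with the correct left/right placement dictated by $\chi(\theta)$, rather than in a neighbouring slot. Controlling this placement is exactly what the enhanced (i),(ii) are for, and the delicate point will be verifying that the recursion peels the nesting one layer at a time in a way simultaneously compatible with the recursive Definition \ref{defn:recursivedefinitionofEpi} and the interval factorisation of Property (iii).
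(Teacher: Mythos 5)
Your overall toolkit (induction, Lemmas \ref{lemproductofdisjointblocksforE}--\ref{lemreductionofbimultiplicativeinsideablockforE}) is the right one, but your case analysis has a genuine hole: the reducible/irreducible dichotomy does not cover the configuration in which \emph{no} union-of-blocks $\chi$-interval strictly contains $V$ together with $\theta$ and $\gamma$ --- equivalently, no block of $W$ spans across $V$. Take $\chi \equiv \ell$, $\pi = \{\{1\},\{2\},\{3\}\}$, $V=\{2\}$, $W=\{1,3\}$, so $\theta=1$, $\gamma=3$. Here $\pi$ is reducible (e.g.\ into $\{1\}$ and $\{2,3\}$), but the only $\chi$-interval containing $V$ whose $\prec_\chi$-endpoints lie in $W$ is all of $\{1,2,3\}$, so your promised decomposition ``with $V$ inside a single piece $U$ with $\min_{\prec_\chi}(U),\max_{\prec_\chi}(U)\in W$'' simply does not exist nontrivially, and your claim that ``the factors disjoint from $V$ coincide term by term'' fails for any genuine splitting: the target expression carries $T_\theta L_{\mathcal{E}_{\pi|_V}((T_1,\ldots,T_n)|_V)}$ in a piece \emph{different} from the one containing $V$, so the $B$-factor must be transported across a piece boundary. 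This is exactly the situation the paper isolates as its base case ($m=0$ in its induction on the number of blocks of $W$ nesting over $V$): one writes $W = W_1 \sqcup W_2$ with $W_1 = \{k \mid k \preceq_\chi \theta\}$, $W_2 = \{k \mid \gamma \preceq_\chi k\}$, factors by Property (iii), and then uses the $q=-\infty$ enhanced properties (Lemmas \ref{lemenhancedpropertyiforE}, \ref{lemenhancedpropertyiiforE}) to absorb the scalar $\mathcal{E}_{\pi|_V}$ into the $\theta$-slot of the $W_1$-factor. Your proposal never performs this cross-boundary absorption, so this whole family of cases is unproved.

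There is a second, smaller misstep in your irreducible case: after enlarging $V$ to the full gap $\tilde V$ of $W_0$, you invoke ``Property (iv) for $\pi|_{\tilde V}$'' to rewrite $\mathcal{E}_{\pi|_{\tilde V}}$ as an insertion of $\mathcal{E}_{\pi|_V}$. But Property (iv) requires $\min_{\prec_\chi}$ and $\max_{\prec_\chi}$ of the ground set to lie in the complementary piece, and when exactly one of $\theta,\gamma$ lies in $W_0$ (which is allowed in your ``otherwise'' branch), $V$ is an initial or terminal $\prec_\chi$-segment of $\tilde V$, so $\min_{\prec_\chi}(\tilde V)$ or $\max_{\prec_\chi}(\tilde V)$ lies in $V$ and the hypotheses of (iv) fail; what is needed there is Property (iii) followed again by the enhanced $q=-\infty$ properties, not an inductive instance of (iv). Both defects are repairable with the lemmas you cite, and your induction on $n$ could be made to work, but as written the proof omits precisely the mechanism (the paper's $m=0$ analysis and its use in the nested step) that makes the insertion at $\theta$ legitimate when $\theta$ is separated from $V$ by a factorization.
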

\begin{proof}
Again, only the proof of the first case where $\chi(\theta) = \ell$ will be presented.
We proceed by induction on $m$, the number of blocks $U\in\pi$ with
\[
U\subseteq W, \qquad
\min_{\prec_\chi}(U) \prec_{\chi} \min_{\prec_\chi}(V), \qquad
\mbox{and} \qquad
\max_{\prec_\chi}(V)  \prec_{\chi} \max_{\prec_\chi}(U).
\]
However, we first must deal with the case $m = 0$.
Let
\begin{align*}
W_1 = \set{k \in \{1,\ldots, n\} \, \mid \, k \preceq_\chi \theta} \qquad
\mbox{and} \qquad
W_2 = \set{k \in \{1,\ldots, n\} \, \mid \, \gamma \preceq_\chi k}.
\end{align*}
By the assumptions on $W$, both $W_1$ and $W_2$ are $\chi$-intervals that are unions of blocks of $\pi$ such that $W = W_1 \sqcup W_2$, and $W_1 \subseteq \chi^{-1}(\ell)$.
Therefore by Lemmata \ref{lemproductofdisjointblocksforE} and \ref{lemenhancedpropertyiforE},
\begin{align*}
\mathcal{E}_\pi(T_1,\ldots, T_n) &=\mathcal{E}_{\pi|_{W_1}}((T_1,\ldots, T_n)|_{W_1})\mathcal{E}_{\pi|_{V}}((T_1,\ldots, T_n)|_{V})\mathcal{E}_{\pi|_{W_2}}((T_1,\ldots, T_n)|_{W_2}) \\
& = \mathcal{E}_{\pi|_{W_1}}((T_1,\ldots, T_{\theta - 1}, T_\theta L_{\mathcal{E}_{\pi|_{V}}((T_1,\ldots, T_n)|_{V})}, T_{\theta + 1}, \ldots, T_n)|_{W_1})\mathcal{E}_{\pi|_{W_2}}((T_1,\ldots, T_n)|_{W_2}) \\
& =\mathcal{E}_{\pi|_{W}}((T_1,\ldots, T_{\theta - 1}, T_\theta L_{\mathcal{E}_{\pi|_{V}}((T_1,\ldots, T_n)|_{V})}, T_{\theta + 1}, \ldots, T_n)|_{W}).
\end{align*}
Note that we would invoke Lemma \ref{lemenhancedpropertyiiforE} instead of \ref{lemenhancedpropertyiforE} in the case $\chi(\theta) = r$.

For the base case of the inductive argument, suppose $m = 1$.
Let
\begin{align*}
\alpha_1 &= \min_{\prec_\chi}(W_0),  & & \alpha_2 = \max_{\prec_\chi}(\{ k \in W_0 \, \mid \, k \preceq_{\chi} \theta   \}), \\
\beta_1& = \max_{\prec_\chi}(W_0), & \mbox{and} \qquad \qquad \qquad &  \beta_2 = \min_{\prec_\chi}(\{ k \in W_0 \, \mid \, \gamma \preceq_\chi k   \}).
\end{align*}
Furthermore, let
\begin{align*}
W'_1 &= \{k \in \{1,\ldots, n\} \, \mid \, k \prec_\chi \alpha_1 \}, & &
W'_2 = \{k \in \{1,\ldots, n\} \, \mid \,  \beta_1 \prec_\chi k\}, \\
W''_1 &= \{k \in \{1,\ldots, n\} \, \mid \, \alpha_2 \prec k \preceq_\chi \theta\},  & \mbox{and} \qquad \quad &
W''_2 = \{k \in \{1,\ldots, n\} \, \mid \, \gamma \preceq_\chi k \prec_\chi \beta_2\}.
\end{align*}
Therefore, if
\begin{align*}
X'_1 &= \mathcal{E}_{\pi|_{W'_1}}((T_1,\ldots, T_n)|_{W'_1}), & & X'_2 = \mathcal{E}_{\pi|_{W'_2}}((T_1,\ldots, T_n)|_{W'_2}), \\
X''_1 &= \mathcal{E}_{\pi|_{W''_1}}((T_1,\ldots, T_n)|_{W''_1}), & \mbox{and} \qquad \qquad \qquad & X''_2 = \mathcal{E}_{\pi|_{W''_2}}((T_1,\ldots, T_n)|_{W''_2}),
\end{align*} 
then by Lemmata \ref{lemproductofdisjointblocksforE} and \ref{lemreductionofbimultiplicativeinsideablockforE},
\begin{align*}
\mathcal{E}_\pi(T_1,\ldots, T_n)
&= X'_1 \mathcal{E}_{\pi|_{W_0 \cup W''_1 \cup W''_2 \cup V}}((T_1,\ldots, T_n)|_{W_0 \cup W''_1 \cup W''_2 \cup V})X'_2 \\
&= X'_1 \mathcal{E}_{\pi|_{W_0 }}\left(\left(T_1,\ldots, T_{\alpha_2 - 1}, T_{\alpha_2}L_{\mathcal{E}_{\pi|_{W''_1 \cup W''_2 \cup V}}((T_1, \ldots, T_n)|_ {W''_1 \cup W''_2 \cup V})}, T_{\alpha_2+1},\ldots, T_n\right)|_{W_0}\right)X'_2 \\
&= X'_1\mathcal{E}_{\pi|_{W_0 }}\left(\left(T_1,\ldots, T_{\alpha_2 - 1}, T_{\alpha_2} L_{X''_1 \mathcal{E}_{\pi|_{V}}((T_1, \ldots, T_n)|_ {V})   X''_2}, T_{\alpha_2+1},\ldots, T_n\right)|_{W_0}\right)X'_2.
\end{align*}

If $W''_1$ is empty, then $\alpha_2 = \theta$ and 
\[
T_{\alpha_2} L_{X''_1  \mathcal{E}_{\pi|_{V}}((T_1, \ldots, T_n)|_ {V})   X''_2} = T_{\theta} L_{\mathcal{E}_{\pi|_{V}}((T_1, \ldots, T_n)|_ {V})} L_{X''_2}.
\]
On the other hand, if $W''_1$ is non-empty, then Lemma \ref{lemenhancedpropertyiforE} implies that
\[
X''_1\mathcal{E}_{\pi|_{V}}((T_1, \ldots, T_n)|_ {V}) = \mathcal{E}_{\pi|_{W''_1}}((T_1, \ldots, T_{\theta - 1}, T_\theta L_{\mathcal{E}_{\pi|_{V}}((T_1, \ldots, T_n)|_ {V})}, T_{\theta + 1}, \ldots, T_n)|_ {W''_1}).
\] 
The result follows now from Lemmata \ref{lemproductofdisjointblocksforE} and \ref{lemreductionofbimultiplicativeinsideablockforE} in the direction opposite the above.

Inductively, suppose that the result holds when $m \geq 1$.
Suppose $W$ contains blocks $W_0, \ldots, W_m$ of $\pi$ which satisfy the above inequalities.
Without loss of generality, we may assume that 
\[
\min_{\prec_\chi}(W_0) \prec_\chi \cdots \prec_\chi \min_{\prec_\chi}(W_m).
\]
Note that as $\pi$ is bi-non-crossing, this implies
\[
\max_{\prec_\chi}(W_m) \prec_\chi \cdots \prec_\chi \max_{\prec_\chi}(W_0).
\]
Let $\alpha_1, \alpha_2, \beta_1, \beta_2$, $W'_1, W'_2, X'_1,$ and $X'_2$ be as above.
Hence applying Lemmata \ref{lemproductofdisjointblocksforE} and \ref{lemreductionofbimultiplicativeinsideablockforE} once again gives
\begin{align*}
\mathcal{E}_\pi&(T_1,\ldots, T_n) \\
&= X'_1\mathcal{E}_{\pi|_{(W'_1 \cup W'_2)^c}}((T_1,\ldots, T_n)|_{(W'_1 \cup W'_2)^c})X'_2\\
&= X'_1\mathcal{E}_{\pi|_{W_0 }}\left(\left(T_1,\ldots, T_{\alpha_2 - 1}, T_{\alpha_2}L_{\mathcal{E}_{\pi|_{(W_0 \cup W'_1 \cup W'_2)^c}}((T_1, \ldots, T_n)|_ {(W_0 \cup W'_1 \cup W'_2)^c})}, T_{\alpha_2+1},\ldots, T_n\right)|_{W_0}\right)X'_2.
\end{align*}
Now, by the inductive hypothesis, we see that
\begin{align*}
\mathcal{E}_{\pi|_{(W_0 \cup W'_1 \cup W'_2)^c}}&((T_1, \ldots, T_n)|_ {(W_0 \cup W'_1 \cup W'_2)^c}) \\
&  = \mathcal{E}_{\pi|_{(W_0 \cup W'_1 \cup W'_2)^c \setminus V}}((T_1, \ldots, T_{\theta - 1}, T_\theta L_{\mathcal{E}_{\pi|_V}((T_1,\ldots, T_n)|_{V})}, T_{\theta + 1}, \ldots, T_n)|_ {(W_0 \cup W'_1 \cup W'_2)^c\setminus V}).
\end{align*}
Hence, by substituting this expression into the above computation and applying Lemmata \ref{lemproductofdisjointblocksforE} and \ref{lemreductionofbimultiplicativeinsideablockforE} in the opposite order, the inductive step is complete.
\end{proof}

With this result, the proof of Theorem \ref{thm:samantha} is complete.


\section{Operator-Valued Bi-Free Cumulants}
\label{sec:operatorvaluedbifreecumulants}

\subsection{Cumulants via Convolution}
Following \cite{S1998}*{Definition 2.1.6}, we begin with a definition of operator-valued convolution.

\begin{defn}
Let $(\mathcal{A}, E, \varepsilon)$ be a $B$-$B$-non-commutative probability space, let
\[
\Phi : \bigcup_{n\geq 1} \bigcup_{\chi : \{1,\ldots, n\} \to \{\ell, r\}} BNC(\chi) \times \mathcal{A}_{\chi(1)} \times \cdots \times \mathcal{A}_{\chi(n)} \to B,
\]
and let $f \in IA(BNC)$.  We define \emph{the convolution of $\Phi$ and $f$}, denoted $\Phi \ast f$, by
\[
(\Phi \ast f)_\pi(T_1, \ldots, T_n) := \sum_{\substack{\sigma \in BNC(\chi)\\ \sigma \leq \pi}} \Phi_\sigma(T_1,\ldots, T_n) f(\sigma, \pi)
\]
for all $\chi : \{1,\ldots, n\} \to \{\ell, r\}$, $\pi \in BNC(\chi)$, and $T_k \in \mathcal{A}_{\chi(k)}$.
\end{defn}
\begin{rem}
One can check that if $\Phi$ is as above and $f,g \in IA(BNC)$ then $(\Phi \ast f) \ast g = \Phi \ast (f \ast g)$.
\end{rem}
\begin{defn}
\label{def:kappa}
Let $(\mathcal{A}, E, \varepsilon)$ be a $B$-$B$-non-commutative probability space and let $\mathcal{E}$ be the operator-valued bi-free moment function on $\A$.  The \emph{operator-valued bi-free cumulant function} is the function
\[
\kappa : \bigcup_{n\geq 1} \bigcup_{\chi : \{1,\ldots, n\} \to \{\ell, r\}} BNC(\chi) \times \mathcal{A}_{\chi(1)} \times \cdots \times \mathcal{A}_{\chi(n)} \to B
\]
defined by
\[
\kappa := \mathcal{E} \ast \mu_{BNC}.
\]
\end{defn}

Note for $\chi : \{1,\ldots, n\} \to \{\ell, r\}$, $\pi \in BNC(\chi)$, and $T_k \in \mathcal{A}_{\chi(k)}$ that
\[
\kappa_\pi(T_1,\ldots, T_n) = \sum_{\sigma \leq \pi} \mathcal{E}_\sigma(T_1, \ldots, T_n) \mu_{BNC}(\sigma, \pi)
\qquad \mbox{ and }\qquad
\mathcal{E}_\pi(T_1, \ldots, T_n) = \sum_{\sigma \leq \pi} \kappa_\sigma(T_1, \ldots, T_n).
\]

\subsection{Convolution Preserves Bi-Multiplicativity}
It is now straightforward to demonstrate the operator-valued bi-free cumulant function is bi-multiplicative.

\begin{thm}
\label{thmbimulticonvoledwithmultiisbimult}
Let $(\mathcal{A}, E, \varepsilon)$ be a $B$-$B$-non-commutative probability space, let
\[
\Phi : \bigcup_{n\geq 1} \bigcup_{\chi : \{1,\ldots, n\} \to \{\ell, r\}} BNC(\chi) \times \mathcal{A}_{\chi(1)} \times \cdots \times \mathcal{A}_{\chi(n)} \to B, 
\]
and let $f \in IA(BNC)$.  If $\Phi$ is bi-multiplicative and $f$ is multiplicative, then $\Phi \ast f$ is bi-multiplicative.
\end{thm}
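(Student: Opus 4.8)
The plan is to verify each of the four conditions of Definition \ref{defnbimultiplicative} for $\Phi \ast f$ directly, by distributing the defining operations through the convolution sum $(\Phi \ast f)_\pi(T_1, \ldots, T_n) = \sum_{\sigma \leq \pi} \Phi_\sigma(T_1, \ldots, T_n) f(\sigma, \pi)$. Properties (i) and (ii) of Definition \ref{defnbimultiplicative} are the easiest: the indices $q$ appearing there depend only on $\chi$ and not on the partition, so by Proposition \ref{propenhancedproperties} each summand $\Phi_\sigma$ already satisfies these identities with $1_\chi$ replaced by $\sigma$. Thus I would apply the relevant identity termwise inside the sum; in the $q \neq -\infty$ cases every summand transforms in the same way and the conclusion is immediate, while in the $q = -\infty$ cases the resulting value $f(\sigma, \pi)$ is a scalar and hence commutes with the trailing (or leading) factor $b \in B$, so $b$ may be pulled out of the entire sum.

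For Properties (iii) and (iv) the crucial combinatorial input is a factorization of the convolution sum. Whenever $V_1, \ldots, V_m$ are unions of blocks of $\pi$ partitioning $\{1,\ldots,n\}$, every $\sigma \leq \pi$ is likewise the disjoint union of its restrictions $\sigma|_{V_j} \leq \pi|_{V_j}$, and conversely any tuple $(\sigma_1, \ldots, \sigma_m)$ with $\sigma_j \leq \pi|_{V_j}$ assembles into a unique such $\sigma$; this gives a bijection $\{\sigma \in BNC(\chi) : \sigma \leq \pi\} \cong \prod_j \{\sigma_j \in BNC(\chi|_{V_j}) : \sigma_j \leq \pi|_{V_j}\}$. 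Since the $V_j$ are unions of blocks of both $\sigma$ and $\pi$, the interval $[\sigma, \pi]$ splits as a product of lattices $\prod_j [\sigma|_{V_j}, \pi|_{V_j}]$, whose canonical decomposition into full intervals (from \cite{CNS2014}) is the concatenation of those of the factors. Multiplicativity of $f$ then yields $f(\sigma, \pi) = \prod_j f(\sigma|_{V_j}, \pi|_{V_j})$, a product of \emph{scalars}.

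For Property (iii), I would combine this with the bi-multiplicativity of $\Phi$, which gives $\Phi_\sigma(T_1, \ldots, T_n) = \prod_{j=1}^m \Phi_{\sigma|_{V_j}}((T_1, \ldots, T_n)|_{V_j})$. Substituting both factorizations into the convolution sum and using that the $f$-values are central, each summand becomes an ordered product $\prod_{j=1}^m \Phi_{\sigma|_{V_j}}(\cdots)\, f(\sigma|_{V_j}, \pi|_{V_j})$; reindexing by the bijection above and invoking the distributive law $\sum_{(\sigma_j)} \prod_j a_j(\sigma_j) = \prod_j \sum_{\sigma_j} a_j(\sigma_j)$ (valid for an ordered product in a ring because each factor depends on a single index) collapses the expression to $\prod_{j=1}^m (\Phi \ast f)_{\pi|_{V_j}}((T_1, \ldots, T_n)|_{V_j})$, as required.

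Property (iv) follows the same template with the partition $\{V, W\}$ and the distinguished index $\theta$ (I would present only the case $\chi(\theta) = \ell$, the others being symmetric). Using bi-multiplicativity of $\Phi$ and abbreviating $b_\sigma = \Phi_{\sigma|_V}((T_1,\ldots,T_n)|_V)$, each summand reads $\Phi_{\sigma|_W}\big((T_1, \ldots, T_\theta L_{b_\sigma}, \ldots, T_n)|_W\big)\, f(\sigma|_V, \pi|_V)\, f(\sigma|_W, \pi|_W)$. The key manipulation is that $\Phi_{\sigma|_W}$ is linear in its $\theta$-th slot and $b \mapsto T_\theta L_b$ is linear, so the $B$-valued argument $b_\sigma$ and the central scalar $f(\sigma|_V, \pi|_V)$ may be combined and summed over $\sigma|_V$ first, producing the insertion $(\Phi \ast f)_{\pi|_V}((T_1,\ldots,T_n)|_V)$ in the slot; summing the remaining $f(\sigma|_W, \pi|_W)$ over $\sigma|_W$ then reconstitutes $(\Phi \ast f)_{\pi|_W}$. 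I expect the main obstacle to be the rigorous verification that the canonical full-interval decomposition of $[\sigma, \pi]$ respects the splitting induced by the $V_j$, so that $f$ factors as claimed; everything else is bookkeeping with linearity and the centrality of the scalar values $f(\sigma, \pi)$.
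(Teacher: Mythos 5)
Your proposal is correct and follows essentially the same route as the paper's proof: termwise application of Proposition \ref{propenhancedproperties} for Properties (i) and (ii), and for Properties (iii) and (iv) the factorization $f(\sigma,\pi)=\prod_j f(\sigma|_{V_j},\pi|_{V_j})$ combined with bi-multiplicativity of $\Phi$, centrality of the scalar $f$-values, and (for (iv)) absorbing $f(\sigma|_V,\pi|_V)$ into the subscript of $L$ before summing over $\sigma|_V$ first. The only difference is that you spell out the restriction bijection $\{\sigma\leq\pi\}\cong\prod_j\{\sigma_j\leq\pi|_{V_j}\}$ and the compatibility of the full-interval decomposition, which the paper leaves implicit.
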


\begin{cor}
\label{cor:cumulantsarebimultiplicative}
The operator-valued bi-free cumulant function is bi-multiplicative.
\end{cor}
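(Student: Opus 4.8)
The plan is to observe that this corollary is an immediate consequence of results already assembled in the excerpt, so the entire task reduces to checking that the three required ingredients are in place and then invoking the convolution theorem. By Definition \ref{def:kappa}, the operator-valued bi-free cumulant function is defined as the convolution $\kappa = \mathcal{E} \ast \mu_{BNC}$, so bi-multiplicativity of $\kappa$ will follow once we know that $\mathcal{E}$ is bi-multiplicative, that $\mu_{BNC}$ is multiplicative, and that convolving a bi-multiplicative function with a multiplicative one preserves bi-multiplicativity.

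First I would cite Theorem \ref{thm:samantha}, whose proof occupies the bulk of the preceding section (Lemmata through \ref{lemfullreductionofproductsinvolvingblocksinsideblocks}), to conclude that the bi-free operator-valued moment function $\mathcal{E}$ is bi-multiplicative. Next I would recall that $\mu_{BNC} \in IA(BNC)$ is multiplicative; this was recorded in the background material on the incident algebra, where it was noted that $\delta_{BNC}$, $\zeta_{BNC}$, and $\mu_{BNC}$ are all multiplicative and that $\mu_{BNC}(\pi,\sigma) = \mu_{NC}(s_\chi^{-1}\cdot\pi, s_\chi^{-1}\cdot\sigma)$. With both hypotheses verified, I would apply Theorem \ref{thmbimulticonvoledwithmultiisbimult} with $\Phi = \mathcal{E}$ and $f = \mu_{BNC}$ to conclude that $\mathcal{E} \ast \mu_{BNC}$ is bi-multiplicative, which is exactly the assertion that $\kappa$ is bi-multiplicative.

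There is genuinely no obstacle at the level of the corollary itself: all the difficulty has been front-loaded into Theorem \ref{thm:samantha} (the delicate casework verifying Properties (iii) and (iv) of Definition \ref{defnbimultiplicative} for $\mathcal{E}$) and into Theorem \ref{thmbimulticonvoledwithmultiisbimult}. Consequently the proof I would write is a single sentence chaining these two theorems together via Definition \ref{def:kappa}. If anything deserves a remark, it is only that one should confirm the types match — $\mu_{BNC}$ is an element of $IA(BNC)$ and is multiplicative in the sense required by Theorem \ref{thmbimulticonvoledwithmultiisbimult} — but this is immediate from the definitions recalled earlier.

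\begin{proof}
By Theorem \ref{thm:samantha}, the operator-valued bi-free moment function $\mathcal{E}$ is bi-multiplicative, and $\mu_{BNC} \in IA(BNC)$ is multiplicative.  Hence Theorem \ref{thmbimulticonvoledwithmultiisbimult} implies that $\mathcal{E} \ast \mu_{BNC}$ is bi-multiplicative.  Since $\kappa = \mathcal{E} \ast \mu_{BNC}$ by Definition \ref{def:kappa}, the operator-valued bi-free cumulant function is bi-multiplicative.
\end{proof}
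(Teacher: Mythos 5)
Your proposal is correct and matches the paper's own route exactly: the corollary is the immediate specialization of Theorem \ref{thmbimulticonvoledwithmultiisbimult} to $\Phi = \mathcal{E}$ and $f = \mu_{BNC}$, using Theorem \ref{thm:samantha} for the bi-multiplicativity of $\mathcal{E}$ and the recorded multiplicativity of $\mu_{BNC}$, via Definition \ref{def:kappa}. The only cosmetic point is that in your final proof the multiplicativity of $\mu_{BNC}$ should be attributed to the background discussion of $IA(BNC)$ rather than folded into the citation of Theorem \ref{thm:samantha}, as your planning paragraph already does correctly.
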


\begin{proof}[Proof of Theorem \ref{thmbimulticonvoledwithmultiisbimult}]
Clearly $(\Phi \ast f)_\pi$ is linear in each entry.
Furthermore, Proposition \ref{propenhancedproperties} establishes that $\Phi \ast f$ satisfies Properties (i) and (ii) of Definition \ref{defnbimultiplicative}.
Thus it remains to verify Properties (iii) and (iv).

Suppose the hypotheses of Property (iii).
We see that
\begin{align*}
(\Phi \ast f)_\pi&(T_1,\ldots, T_n) \\
&= \sum_{\sigma \leq \pi} \Phi_\sigma(T_1, \ldots, T_n) f(\sigma, \pi)      \\
 &= \sum_{\sigma \leq \pi} \Phi_{\sigma|_{V_1}}((T_1,\ldots, T_n)|_{V_1})   \cdots \Phi_{\sigma|_{V_m}}((T_1,\ldots, T_n)|_{V_m})  f(\sigma|_{V_1}, \pi|_{V_1}) \cdots f(\sigma|_{V_m}, \pi|_{V_m})  \\
  &= (\Phi \ast f)_{\pi|_{V_1}}((T_1,\ldots, T_n)|_{V_1})  \cdots (\Phi \ast f)_{\pi|_{V_m}}((T_1,\ldots, T_n)|_{V_m}),
\end{align*}
using the bi-multiplicativity of $\Phi$ and the multiplicativity of $f$.

To see Property (iv) holds, note under the hypotheses of its initial case,
\begin{align*}
(\Phi \ast f)_\pi&(T_1,\ldots, T_n) \\
 &= \sum_{\sigma \leq \pi} \Phi_\sigma(T_1, \ldots, T_n) f(\sigma, \pi)      \\
 &= \sum_{\sigma \leq \pi} \Phi_{\sigma|_{W}}((T_1,\ldots, T_{\theta-1}, T_\theta L_{\Phi_{\sigma|_{V}}((T_1, \ldots, T_n)|_V)}, T_{\theta + 1}, \ldots, T_n)|_{W})  f(\sigma|_{V}, \pi|_{V}) f(\sigma|_{W}, \pi|_{W})  \\
  &= \sum_{\sigma \leq \pi} \Phi_{\sigma|_{W}}((T_1,\ldots, T_{\theta-1}, T_\theta L_{\Phi_{\sigma|_{V}}((T_1, \ldots, T_n)|_V)f(\sigma|_{V}, \pi|_{V})}, T_{\theta + 1}, \ldots, T_n)|_{W})   f(\sigma|_{W}, \pi|_{W})  \\
  &= (\Phi \ast f)_{\pi|_{W}}((T_1,\ldots, T_{\theta-1}, T_\theta L_{\Phi_{\sigma|_{V}}((T_1, \ldots, T_n)|_V)}, T_{\theta + 1}, \ldots, T_n)|_{W}),
\end{align*}
again by the corresponding properties of $\Phi$ and $f$.
The proof of the remaining three statements in Property (iv) is identical.
\end{proof}

\subsection{Bi-Moment and Bi-Cumulant Functions}
\label{subsec:bimomentandbicumulantfunctions}

Inspired by \cite{S1998}*{Section 3.2}, we define the formal classes of bi-moment and bi-cumulant functions and give an important relation between them.
It follows readily that the operator-valued bi-free moment and cumulant functions on a $B$-$B$-non-commutative probability space are examples of these types of functions, respectively.

We begin with the following useful notation.
\begin{nota}
Let $\chi : \{1, \ldots, n\} \to \{\ell, r\}$, $\pi \in BNC(\chi)$, and $q \in \{1,\ldots, n\}$.
We denote by $\chi|_{\setminus q}$ the restriction of $\chi$ to the set $\{1,\ldots, n\} \setminus \{q\}$.
In addition, if $q \neq n$, we define $\pi|_{q = q+1} \in BNC(\chi|_{\setminus q})$ to be the bi-non-crossing partition which results from identifying $q$ and $q+1$ in $\pi$ (i.e. if $q$ and $q+1$ are in the same block as $\pi$ then $\pi|_{q=q+1}$ is obtained from $\pi$ by just removing $q$ from the block in which $q$ occurs, while if $q$ and $q+1$ are in different blocks, $\pi|_{q=q+1}$ is obtained from $\pi$ by merging the two blocks and then removing $q$).
\end{nota}

\begin{defn}
\label{defnbimomentandbicumulantfunctions}
Let $(\mathcal{A}, E, \varepsilon)$ be a $B$-$B$-non-commutative probability space and let
\[
\Phi : \bigcup_{n\geq 1} \bigcup_{\chi : \{1,\ldots, n\} \to \{\ell, r\}} BNC(\chi) \times \mathcal{A}_{\chi(1)} \times \cdots \times \mathcal{A}_{\chi(n)} \to B
\]
be bi-multiplicative.  We say that $\Phi$ is a \emph{bi-moment function} if whenever $\chi : \{1,\ldots, n\} \to \{\ell, r\}$ is such that there exists a $q \in \{1,\ldots, n-1\}$ with $\chi(q) = \chi(q+1)$, then
\[
\Phi_{1_\chi}(T_1, \ldots, T_n) = \Phi_{1_{\chi|_{\setminus q}}}(T_1, \ldots, T_{q-1}, T_qT_{q+1}, T_{q+2},\ldots, T_n)
\]
for all $T_k \in \mathcal{A}_{\chi(k)}$.  Similarly, we say that $\Phi$ is a \emph{bi-cumulant function} if whenever $\chi : \{1,\ldots, n\} \to \{\ell, r\}$ and $\pi \in BNC(\chi)$ are such that there exists a $q \in \{1,\ldots, n-1\}$ with $\chi(q) = \chi(q+1)$, then
\[
\Phi_{1_{\chi|_{\setminus q}}}(T_1, \ldots, T_{q-1}, T_qT_{q+1}, T_{q+2},\ldots, T_n) = \Phi_{1_\chi}(T_1, \ldots, T_n) + \sum_{\substack{\pi \in BNC(\chi)\\ |\pi| =2, q\nsim_\pi q+1}} \Phi_\pi(T_1, \ldots, T_n)
\]
for all $T_k \in \mathcal{A}_{\chi(k)}$.
\end{defn}

\begin{rem}
The operator-valued bi-free moment function $\mathcal{E}$ is a bi-moment function.
\end{rem}

Before relating the notions of bi-moment and bi-cumulant functions, we note the following alternate formulations.
\begin{lem}
\label{lemequivalentnotionsofbimomentandbicumulant}
Let $(\mathcal{A}, E, \varepsilon)$ be a $B$-$B$-non-commutative probability space and let
\[
\Phi : \bigcup_{n\geq 1} \bigcup_{\chi : \{1,\ldots, n\} \to \{\ell, r\}} BNC(\chi) \times \mathcal{A}_{\chi(1)} \times \cdots \times \mathcal{A}_{\chi(n)} \to B
\]
be bi-multiplicative.  Then $\Phi$ is a bi-moment function if and only if whenever $\chi : \{1,\ldots, n\} \to \{\ell, r\}$ and $\pi \in BNC(\chi)$ are such that there exists a $q \in \{1,\ldots, n-1\}$ with $\chi(q) = \chi(q+1)$ and $q \sim_\pi q+1$, then
\[
\Phi_\pi(T_1, \ldots, T_n) = \Phi_{\pi|_{q = q+1}}(T_1, \ldots, T_{q-1}, T_qT_{q+1}, T_{q+2}, \ldots T_n)
\]
for all $T_k \in \mathcal{A}_{\chi(k)}$.  Similarly, $\Phi$ is a bi-cumulant function if and only if whenever $\chi : \{1,\ldots, n\} \to \{\ell, r\}$ is such that there exists a $q \in \{1,\ldots, n-1\}$ with $\chi(q) = \chi(q+1)$, we have
\[
\Phi_{\pi}(T_1, \ldots, T_{q-1}, T_qT_{q+1}, T_{q+2}, \ldots T_n) = \sum_{\substack{\sigma \in BNC(\chi) \\ \sigma|_{q = q+1} = \pi}} \Phi_\sigma(T_1, \ldots, T_n)
\]
for all $\pi \in BNC(\chi|_{\setminus q})$.
\end{lem}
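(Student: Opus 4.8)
The plan is to derive each of the two ``general $\pi$'' statements from the corresponding ``$\pi = 1$'' identity of Definition \ref{defnbimomentandbicumulantfunctions}, the converse implications being immediate specializations. For the bi-moment claim, setting $\pi = 1_\chi$ forces $q \sim_\pi q+1$ and gives $(1_\chi)|_{q=q+1} = 1_{\chi|_{\setminus q}}$, recovering the definition. For the bi-cumulant claim, setting $\pi = 1_{\chi|_{\setminus q}}$ and checking that the $\sigma \in BNC(\chi)$ with $\sigma|_{q=q+1} = 1_{\chi|_{\setminus q}}$ are exactly $1_\chi$ (the case $q \sim_\sigma q+1$) together with the two-block partitions having $q \nsim_\sigma q+1$, recovers the definition. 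Thus only the forward directions require work.

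Both forward directions rest on the same mechanism. Because $\chi(q) = \chi(q+1)$, the indices $q$ and $q+1$ are adjacent in $\prec_\chi$ and no index lies $\prec_\chi$-between them; hence the block $V$ of $\pi$ containing them has no nested block ``between'' $q$ and $q+1$, so merging or splitting this pair interacts with nothing else. I would argue by induction on $n$ using Properties (iii) and (iv) of Definition \ref{defnbimultiplicative}. If $\pi$ decomposes into two or more $\chi$-intervals, the pair $q, q+1$ lies in one of them and the inductive hypothesis applies to that shorter interval; similarly, if $V$ is nested inside a $\prec_\chi$-gap of another block, Property (iv) isolates the proper sub-interval carrying $q, q+1$ and the inductive hypothesis applies there. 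The genuine base case is when $V$ is the outermost block, spanning $\min_{\prec_\chi}(\{1,\ldots,n\})$ and $\max_{\prec_\chi}(\{1,\ldots,n\})$: here I would peel off each block nested in a gap of $V$ by one application of Property (iv), each insertion contributing a fixed $B$-value (independent of $q, q+1$) as an operator $L_b$ or $R_b$ adjacent to a node of $V$. After all such peelings, $\Phi_\pi(T_1, \ldots, T_n)$ is expressed through a single evaluation $\Phi_{1_{\chi|_V}}$ on the nodes of $V$, whose arguments are the $T_k$ with $k \in V$ dressed by these $B$-operators, and in which $q$ and $q+1$ occupy adjacent same-side positions with no operator between them.

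For the bi-moment equivalence I would now invoke the defining identity on this exposed block: since $q, q+1$ are adjacent in $V$ with nothing between them, $\Phi_{1_{\chi|_V}}(\ldots, T_q, T_{q+1}, \ldots)$ equals $\Phi_{1_{\chi|_V \setminus q}}(\ldots, T_q T_{q+1}, \ldots)$, the surrounding dressing factoring legitimately as $(\cdots)(T_q T_{q+1})(\cdots) = (\cdots T_q)(T_{q+1}\cdots)$. Re-inserting the peeled blocks in reverse, which is unaffected by the merge as their insertion points are nodes of $V$ distinct from the merged node, turns the right-hand side into $\Phi_{\pi|_{q=q+1}}(T_1, \ldots, T_{q-1}, T_q T_{q+1}, T_{q+2}, \ldots, T_n)$, completing this case.

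For the bi-cumulant equivalence I would instead feed the defining expansion into the exposed block, replacing $\Phi_{1_{\chi|_V \setminus q}}(\ldots, T_q T_{q+1}, \ldots)$ by $\Phi_{1_{\chi|_V}}(\ldots, T_q, T_{q+1}, \ldots) + \sum_\tau \Phi_\tau(\ldots)$, the sum running over the two-block $\tau \in BNC(\chi|_V)$ with $q \nsim_\tau q+1$. Because the surrounding context (products from Property (iii) and the $B$-operator insertions from Property (iv)) acts $B$-linearly, it distributes over this finite sum. Reassembling the $\Phi_{1_{\chi|_V}}$ term yields $\Phi_{\hat\pi}(T_1, \ldots, T_n)$, where $\hat\pi$ splits the merged node of $\pi$ into $q, q+1$ inside one block, while reassembling each $\Phi_\tau$ term yields $\Phi_\sigma(T_1, \ldots, T_n)$ for the $\sigma$ obtained from $\pi$ by refining $V$ according to $\tau$. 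A final combinatorial check, that $\tau \mapsto \sigma$ is a bijection onto $\{\sigma \in BNC(\chi) : \sigma|_{q=q+1} = \pi, \ q \nsim_\sigma q+1\}$ (bi-non-crossingness being inherited since the refinement stays within the envelope of $V$) with $\hat\pi$ accounting for the remaining case, identifies the total with $\sum_{\sigma|_{q=q+1} = \pi} \Phi_\sigma$. The main obstacle is the bookkeeping of the second paragraph: rigorously peeling the nested blocks so that the global value factors through a single $B$-linear use of the block-$V$ evaluation, uniformly across the merge and across all refinements $\tau$ entering the cumulant expansion.
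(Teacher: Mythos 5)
Your proposal is correct and follows precisely the paper's intended argument, which the paper records in a single sentence: use bi-multiplicativity (Properties (iii) and (iv)) to reduce to the case of full partitions, then apply Definition \ref{defnbimomentandbicumulantfunctions}. Your write-up simply fills in the details of that reduction — the $\prec_\chi$-adjacency of $q$ and $q+1$, the peeling of nested blocks, and the bijection between refinements of the exposed block and the partitions $\sigma$ with $\sigma|_{q=q+1}=\pi$ — all of which are sound.
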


To establish the lemma, one uses bi-multiplicativity to reduce to the case of full partitions and then applies Definition \ref{defnbimomentandbicumulantfunctions}.

\begin{thm}
\label{thmrelationbetweenbimomentandbicumulantfunctions}
Let $(\mathcal{A}, E, \varepsilon)$ be a $B$-$B$-non-commutative probability space and let
\[
\Phi , \Psi: \bigcup_{n\geq 1} \bigcup_{\chi : \{1,\ldots, n\} \to \{\ell, r\}} BNC(\chi) \times \mathcal{A}_{\chi(1)} \times \cdots \times \mathcal{A}_{\chi(n)} \to B
\]
be related by the formulae
\[
\Phi = \Psi \ast \zeta_{BNC}, \qquad \mbox{or equivalently} \qquad \Psi = \Phi \ast \mu_{BNC}.
\]
Then $\Phi$ is a bi-moment function if and only if $\Psi$ is a bi-cumulant function. 
\end{thm}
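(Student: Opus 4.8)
The plan is to reduce everything to the equivalent characterizations of bi-moment and bi-cumulant functions furnished by Lemma \ref{lemequivalentnotionsofbimomentandbicumulant}, together with a single combinatorial identity describing how the interval $[\,0_\chi,\pi\,]$ behaves under the contraction $\pi\mapsto\pi|_{q=q+1}$. Since $\zeta_{BNC}$ and $\mu_{BNC}$ are multiplicative, Theorem \ref{thmbimulticonvoledwithmultiisbimult} shows that $\Phi=\Psi\ast\zeta_{BNC}$ is bi-multiplicative if and only if $\Psi=\Phi\ast\mu_{BNC}$ is. Hence I may assume throughout that both $\Phi$ and $\Psi$ are bi-multiplicative, as otherwise neither side of the claimed equivalence can hold (each notion includes bi-multiplicativity). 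Unwinding the convolution gives $\Phi_\pi(T_1,\ldots,T_n)=\sum_{\sigma\le\pi}\Psi_\sigma(T_1,\ldots,T_n)$ for every $\pi\in BNC(\chi)$, which is the only analytic input I will need.

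The combinatorial heart of the argument is the following claim: if $\chi(q)=\chi(q+1)$ and $\pi\in BNC(\chi)$ satisfies $q\sim_\pi q+1$, then
\[
\set{\sigma\in BNC(\chi)\ \mid\ \sigma\le\pi}=\set{\sigma\in BNC(\chi)\ \mid\ \sigma|_{q=q+1}\le\pi|_{q=q+1}}.
\]
To prove it, I would first note that $\chi(q)=\chi(q+1)$ forces $q$ and $q+1$ to be adjacent in the order $\prec_\chi$, so that under $s_\chi^{-1}$ the operation $|_{q=q+1}$ becomes identification of two neighbouring points, mapping $NC(n)$ to $NC(n-1)$; since $\mu_{BNC}(\cdot,\cdot)=\mu_{NC}(s_\chi^{-1}\cdot,\,s_\chi^{-1}\cdot)$ I may work entirely in $NC$. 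The inclusion $\subseteq$ is monotonicity of the contraction. For $\supseteq$ I would argue by contraposition: if a block of $\sigma$ contains $a,b$ lying in distinct blocks of $\pi$, then a short case analysis on whether $\{a,b\}$ meets $\{q,q+1\}$ (using $q\sim_\pi q+1$, so that $q$ and $q+1$ share a $\pi$-block) shows that after contraction $a,b$, or one of them together with $q+1$, still occupy a common block of $\sigma|_{q=q+1}$ but distinct blocks of $\pi|_{q=q+1}$, contradicting $\sigma|_{q=q+1}\le\pi|_{q=q+1}$.

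Granting the claim, both implications follow from one chain of equalities applied at partitions with $q\sim_\pi q+1$. Using $\Phi=\Psi\ast\zeta_{BNC}$ and the equivalent bi-cumulant identity from Lemma \ref{lemequivalentnotionsofbimomentandbicumulant},
\[
\Phi_{\pi|_{q=q+1}}(T_1,\ldots,T_qT_{q+1},\ldots,T_n)=\sum_{\tau\le\pi|_{q=q+1}}\ \sum_{\sigma:\,\sigma|_{q=q+1}=\tau}\Psi_\sigma(T_1,\ldots,T_n)=\sum_{\sigma:\,\sigma|_{q=q+1}\le\pi|_{q=q+1}}\Psi_\sigma(T_1,\ldots,T_n),
\]
the outer sums running over $BNC(\chi|_{\setminus q})$ and $\sigma$ over $BNC(\chi)$. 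If $\Psi$ is a bi-cumulant function, the claim identifies the last sum with $\sum_{\sigma\le\pi}\Psi_\sigma=\Phi_\pi$, yielding the equivalent bi-moment identity and hence that $\Phi$ is a bi-moment function. Conversely, if $\Phi$ is a bi-moment function, combining the same chain with $\Phi_\pi=\Phi_{\pi|_{q=q+1}}(T_1,\ldots,T_qT_{q+1},\ldots,T_n)$ and the claim gives, for every $\pi$ with $q\sim_\pi q+1$,
\[
\sum_{\tau\le\pi|_{q=q+1}}\left[\Psi_\tau(T_1,\ldots,T_qT_{q+1},\ldots,T_n)-\sum_{\sigma:\,\sigma|_{q=q+1}=\tau}\Psi_\sigma(T_1,\ldots,T_n)\right]=0.
\]
Because $|_{q=q+1}$ restricts to an order isomorphism from $\set{\pi\in BNC(\chi)\ \mid\ q\sim_\pi q+1}$ onto $BNC(\chi|_{\setminus q})$, the partition $\pi|_{q=q+1}$ ranges over all of $BNC(\chi|_{\setminus q})$, so invertibility of $\zeta_{BNC}$ (M\"obius inversion) forces each bracketed term to vanish, which is exactly the equivalent bi-cumulant identity for $\Psi$.

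I expect the main obstacle to be the combinatorial claim itself, specifically the $\supseteq$ inclusion and the care needed to transport the contraction to an adjacent-point identification in $NC(n)$; a secondary point requiring attention is the verification that $|_{q=q+1}$ is an order isomorphism on $\set{\pi\ \mid\ q\sim_\pi q+1}$, which is what licenses the M\"obius inversion in the backward direction.
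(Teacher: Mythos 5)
Your proposal is correct, and it differs from the paper's argument in a substantive way. In the forward direction ($\Psi$ bi-cumulant $\Rightarrow$ $\Phi$ bi-moment) the paper works only at $1_\chi$: it expands $\Phi_{1_{\chi|_{\setminus q}}}$ by the convolution formula, applies the equivalent bi-cumulant identity of Lemma \ref{lemequivalentnotionsofbimomentandbicumulant}, and regroups the fibers of $\sigma \mapsto \sigma|_{q=q+1}$ to recover $\Phi_{1_\chi}$ --- since Definition \ref{defnbimomentandbicumulantfunctions} only demands the bi-moment identity at full partitions, no further combinatorics is needed. You instead verify the Lemma's equivalent bi-moment identity at every $\pi$ with $q \sim_\pi q+1$, which is what forces your combinatorial claim that $\set{\sigma \mid \sigma \leq \pi} = \set{\sigma \mid \sigma|_{q=q+1} \leq \pi|_{q=q+1}}$; that claim is true (your contrapositive case analysis goes through, with the key point being that when the offending pair involves $q$, the element $q+1$ inherits its role after contraction because $q\sim_\pi q+1$ keeps the $\pi$-blocks unmerged). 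The real divergence is the backward direction: the paper proceeds by induction on $n$, using bi-multiplicativity to push the inductive hypothesis onto all non-full partitions and then isolating the $1_{\chi|_{\setminus q}}$ term, whereas you establish the summed identity over every interval $\left[0_{\chi|_{\setminus q}}, \rho\right]$ (using the claim together with surjectivity of contraction from $\set{\pi \mid q\sim_\pi q+1}$ onto $BNC(\chi|_{\setminus q})$ --- note only surjectivity, not the full order isomorphism, is needed) and then kill each bracketed term by M\"obius inversion, which is valid for $B$-valued functions since $\mu_{BNC}$ is integer-valued. Your route buys a uniform, induction-free argument in which both implications flow from a single chain of equalities; its cost is the lattice-theoretic claim about contraction, which the paper avoids at the price of hiding comparable work inside Lemma \ref{lemequivalentnotionsofbimomentandbicumulant} and the induction.
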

\begin{proof}
To begin, note $\Phi$ is bi-multiplicative if and only if $\Psi$ is bi-multiplicative by Theorem \ref{thmbimulticonvoledwithmultiisbimult}.

Suppose $\Psi$ is a bi-cumulant function.  If $\chi : \{1,\ldots, n\} \to \{\ell, r\}$ is such that there exists a $q \in \{1,\ldots, n-1\}$ with $\chi(q) = \chi(q+1)$, then for all $T_k \in \mathcal{A}_{\chi(k)}$
\begin{align*}
\Phi_{1_{\chi|_{\setminus q}}}(T_1,\ldots, T_{q-1}, T_q T_{q+1}, T_{q+2}, \ldots, T_n) &= \sum_{\pi \in BNC(\chi|_{\setminus q})} \Psi_{\pi}(T_1,\ldots, T_{q-1}, T_q T_{q+1}, T_{q+2}, \ldots, T_n)  \\
 &=  \sum_{\pi \in BNC(\chi|_{\setminus q})} \sum_{\substack{\sigma \in BNC(\chi) \\ \sigma|_{q = q+1} = \pi}} \Psi_{\sigma}(T_1,\ldots, T_n) \\
 &=  \sum_{\sigma \in BNC(\chi)} \Psi_{\sigma}(T_1,\ldots, T_n) \\
 &=  \Phi_{1_\chi}(T_1,\ldots, T_n).
\end{align*}
Thus $\Phi$ is a bi-moment function.

For the other direction, suppose $\Phi$ is a bi-moment function.
Let $\chi : \{1, \ldots, n\} \to \{\ell, r\}$.  We will proceed by induction on $n$.
If $n = 1$, there is nothing to check.
If $n = 2$, then
\[
\Psi_{1_{\chi|_{1=2}}}(T_1T_2) = \Phi_{1_{\chi|_{1=2}}}(T_1T_2) = \Phi_{1_\chi}(T_1, T_2) = \Psi_{1_\chi}(T_1, T_2) + \Psi_{0_\chi}(T_1, T_2)
\]
as required.
\par 
Suppose that the formula from Definition \ref{defnbimomentandbicumulantfunctions} holds for $n-1$.
Then using the induction hypothesis and bi-multiplicativity of $\Psi$, we see for all $\pi \in BNC(\chi|_{\setminus q}) \setminus \{1_{\chi|_{\setminus q}}\}$ that
\[
\Psi_{\pi}(T_1, \ldots, T_{q-1}, T_qT_{q+1}, T_{q+2}, \ldots, T_n) = \sum_{\substack{\sigma \in BNC(\chi) \\ \sigma|_{q = q+1} = \pi}} \Psi_\sigma(T_1, \ldots, T_n).
\]
Hence
\begin{align*}
\Psi_{1_{\chi|_{\setminus q}}}&(T_1, \ldots, T_{q-1}, T_qT_{q+1}, T_{q+2}, \ldots, T_n) \\
&= \Phi_{1_{\chi|_{\setminus q}}}(T_1, \ldots, T_{q-1}, T_qT_{q+1}, T_{q+2}, \ldots, T_n) - \sum_{\substack{\pi \in BNC(\chi|_{\setminus q}) \\  \pi \neq 1_{\chi|_{\setminus q}}}}   \Psi_{\pi}(T_1, \ldots, T_{q-1}, T_qT_{q+1}, T_{q+2}, \ldots T_n) \\
 &= \Phi_{1_{\chi}}(T_1, \ldots, T_n) - \sum_{\substack{\pi \in BNC(\chi|_{\setminus q}) \\  \pi \neq 1_{\chi|_{\setminus q}}}} \sum_{\substack{\sigma \in BNC(\chi) \\ \sigma|_{q = q+1} = \pi}} \Psi_\sigma(T_1, \ldots, T_n)  \\
  &= \sum_{\sigma\in BNC(\chi)}\Psi_{\sigma}(T_1, \ldots, T_n) - \sum_{\substack{\sigma \in BNC(\chi) \\ \sigma|_{q = q+1} \neq 1_{\chi|_{\setminus q}}}} \Psi_\sigma(T_1, \ldots, T_n)  \\
 &= \sum_{\substack{\sigma \in BNC(\chi) \\ \sigma|_{q = q+1} = 1_{\chi|_{\setminus q}}}} \Psi_\sigma(T_1, \ldots, T_n). \qedhere
\end{align*}
\end{proof}
\begin{cor}
\label{corcumulantfunctionisabicumulantfunction}
The operator-valued bi-free cumulant function $\kappa$ is a bi-cumulant function.
\end{cor}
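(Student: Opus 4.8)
The plan is to obtain this corollary as an immediate application of Theorem \ref{thmrelationbetweenbimomentandbicumulantfunctions}, which has already done all of the substantive work. Recall that theorem asserts: if $\Phi$ and $\Psi$ are related by $\Psi = \Phi \ast \mu_{BNC}$ (equivalently $\Phi = \Psi \ast \zeta_{BNC}$), then $\Phi$ is a bi-moment function if and only if $\Psi$ is a bi-cumulant function. The strategy is therefore to specialize this correspondence to the pair $\Phi = \mathcal{E}$ and $\Psi = \kappa$, and then invoke the already-established fact that $\mathcal{E}$ is a bi-moment function.

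First I would verify that the pair $(\mathcal{E}, \kappa)$ satisfies the hypotheses of Theorem \ref{thmrelationbetweenbimomentandbicumulantfunctions}. By Definition \ref{def:kappa} we have precisely $\kappa = \mathcal{E} \ast \mu_{BNC}$, which is the defining relation $\Psi = \Phi \ast \mu_{BNC}$. I should also note that the theorem implicitly requires the functions in question to be bi-multiplicative: $\mathcal{E}$ is bi-multiplicative by Theorem \ref{thm:samantha}, and $\kappa$ is bi-multiplicative by Corollary \ref{cor:cumulantsarebimultiplicative} (or, equivalently, directly from Theorem \ref{thmbimulticonvoledwithmultiisbimult} together with the multiplicativity of $\mu_{BNC}$). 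Thus both $\Phi = \mathcal{E}$ and $\Psi = \kappa$ lie in the class of functions to which Theorem \ref{thmrelationbetweenbimomentandbicumulantfunctions} applies.

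Next I would recall that $\mathcal{E}$ is a bi-moment function, as observed in the remark immediately following Definition \ref{defnbimomentandbicumulantfunctions}; this is exactly the hypothesis needed on the $\Phi$ side of the correspondence. Applying Theorem \ref{thmrelationbetweenbimomentandbicumulantfunctions} with $\Phi = \mathcal{E}$ (a bi-moment function) and $\Psi = \kappa$ then yields at once that $\kappa$ is a bi-cumulant function, which is the desired conclusion.

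There is essentially no genuine obstacle at this stage: the corollary is a one-line consequence of the machinery built in the preceding subsection. The only point that warrants a moment's care is making explicit that $\kappa$ is indeed bi-multiplicative, so that it is a legitimate input to Theorem \ref{thmrelationbetweenbimomentandbicumulantfunctions}; but this is already recorded as Corollary \ref{cor:cumulantsarebimultiplicative}. All of the real difficulty has been absorbed into establishing that $\mathcal{E}$ is a bi-moment function (which rests on the bi-multiplicativity proved in Theorem \ref{thm:samantha}) and into the moment--cumulant inversion of Theorem \ref{thmrelationbetweenbimomentandbicumulantfunctions} itself.
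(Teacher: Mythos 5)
Your proposal is correct and is exactly the argument the paper intends: Corollary \ref{corcumulantfunctionisabicumulantfunction} is stated without proof precisely because it follows immediately from Theorem \ref{thmrelationbetweenbimomentandbicumulantfunctions} applied with $\Phi = \mathcal{E}$ and $\Psi = \kappa = \mathcal{E} \ast \mu_{BNC}$, together with the remark that $\mathcal{E}$ is a bi-moment function. Your extra care about bi-multiplicativity of $\kappa$ is fine but not strictly needed, since the proof of Theorem \ref{thmrelationbetweenbimomentandbicumulantfunctions} already notes (via Theorem \ref{thmbimulticonvoledwithmultiisbimult}) that $\Phi$ is bi-multiplicative if and only if $\Psi$ is.
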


\subsection{Vanishing of Operator-Valued Bi-Free Cumulants}

The following demonstrates, like with classical and free cumulants, that operator-valued bi-free cumulants of order at least two vanish provided at least one entry is an element of $B$.
\begin{prop}
\label{prop:inputaL_borR_bandyougetzerocumulants}
Let $(\mathcal{A}, E, \varepsilon)$ be a $B$-$B$-non-commutative probability space, $\chi : \{1,\ldots, n\} \to \{\ell, r\}$ with $n \geq 2$, and $T_k \in \mathcal{A}_{\chi(k)}$.
If there exist $q \in \{1,\ldots, n\}$ and $b \in B$ such that $T_q = L_b$ if $\chi(q) = \ell$ or $T_q = R_b$ if $\chi(q) = r$, then
\[
\kappa(T_1, \ldots, T_n) = 0.
\]
\end{prop}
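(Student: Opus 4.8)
The plan is to argue by strong induction on $n$, treating the representative case $\chi(q)=\ell$ and $T_q=L_b$ (the case $\chi(q)=r$, $T_q=R_b$ following by the left--right mirror symmetry). For the base case $n=2$ one computes directly: since $BNC(\chi)=\{1_\chi,0_\chi\}$, the moment--cumulant relation recorded after Definition \ref{def:kappa} gives
\[
\kappa_{1_\chi}(T_1,T_2)=\mathcal E_{1_\chi}(T_1,T_2)-\mathcal E_{0_\chi}(T_1,T_2)=E(T_1T_2)-E(T_1)E(T_2),
\]
and using the defining properties of $E$ from Definition \ref{defn:BBncps} together with the facts that $\mathcal A_r$ commutes with every $L_b$ and $\mathcal A_\ell$ with every $R_b$, both $E(L_bT_2)$ and $E(T_1L_b)$ collapse to the product of the two first-order moments, so the difference vanishes.

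For the inductive step I would start from
\[
\kappa_{1_\chi}(T_1,\dots,T_n)=\mathcal E_{1_\chi}(T_1,\dots,T_n)-\sum_{\substack{\pi\in BNC(\chi)\\\pi\neq 1_\chi}}\kappa_\pi(T_1,\dots,T_n)
\]
and split the sum according to whether $\{q\}$ is a block of $\pi$. If the block $V\ni q$ has $|V|\ge 2$, then reducing $\kappa_\pi$ through its block values as in Remark \ref{remonbimultiplicative} (legitimate since $\kappa$ is bi-multiplicative by Corollary \ref{cor:cumulantsarebimultiplicative}), the factor coming from $V$ is $\kappa_{1_{\chi|_V}}$ evaluated on $(T_1,\dots,T_n)|_V$, whose $q$-th entry is still a left $B$-operator $L_{b'}$: any insertion at position $q$ from a strictly inner block multiplies $L_b$ by another $L_c$, giving $L_bL_c=L_{bc}$. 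Since $2\le|V|<n$, the inductive hypothesis forces this block value to be $0$, and because it enters the reduction either as an honest product factor (Property (iii)) or inside an $L/R$ insertion (Property (iv)), with $L_0=R_0=0$, the whole term $\kappa_\pi$ vanishes. Thus only the partitions in which $\{q\}$ is a singleton survive, and it remains to prove
\[
\mathcal E_{1_\chi}(T_1,\dots,T_n)=\sum_{\pi:\,\{q\}\text{ a block}}\kappa_\pi(T_1,\dots,T_n),
\]
which finishes the proof, as for $n\ge 2$ the partition $1_\chi$ is not of this form.

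To establish this identity I would evaluate both sides as the same moment in $n-1$ variables. On the right, the singleton $\{q\}$ contributes the first-order value $\kappa_{1_{\chi|_{\{q\}}}}(L_b)=E(L_b)=b$, and bi-multiplicativity (Property (iv), or Property (iii) when $q$ is $\prec_\chi$-extremal) absorbs it as $L_b$ into the argument indexed by the $\prec_\chi$-predecessor $\theta$ of $q$, which is again a left index; summing over the remaining $\rho\in BNC(\chi|_{\setminus q})$ and invoking the moment--cumulant relation in $n-1$ variables rewrites the sum as $\mathcal E_{1_{\chi|_{\setminus q}}}$ evaluated on the $(n-1)$-tuple with $T_\theta$ replaced by $T_\theta L_b$. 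On the left, $\mathcal E_{1_\chi}(T_1,\dots,T_n)=E(T_1\cdots L_b\cdots T_n)$, and since $L_b$ commutes with every right operator it slides leftward past $T_{\theta+1},\dots,T_{q-1}$ (all right indices, as $\theta$ is the previous left index) and merges into $T_\theta\in\mathcal A_\ell$, producing exactly the same $(n-1)$-variable moment; when $q$ is the $\prec_\chi$-minimal index, $L_b$ slides to the front and factors out as $b$, matching the boundary use of Property (iii). The two expressions coincide, so the difference is zero.

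The main obstacle will be this last matching: keeping the left/right bookkeeping consistent so that the cumulant-side insertion and the moment-side absorption land on the same neighbor $T_\theta$ with the same side, and cleanly disposing of the boundary cases where $\{q\}$ sits at a $\prec_\chi$-extreme (so Property (iv) is unavailable and one falls back on Property (iii), together with the $q=-\infty$ cases of Properties (i) and (ii)). A secondary point requiring care is the rigorous propagation of a vanishing block value to $\kappa_\pi=0$, which rests on reading off from Properties (iii)--(iv) that each block value of a bi-multiplicative function appears either multiplicatively or inside an $L/R$ insertion.
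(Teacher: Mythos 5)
Your proposal is correct and follows essentially the same route as the paper's proof: induction on $n$ with a direct order-two base case, splitting the sum $\kappa_{1_\chi} = \mathcal{E}_{1_\chi} - \sum_{\pi \neq 1_\chi} \kappa_\pi$ according to whether $\{q\}$ is a singleton block, killing the non-singleton terms by the inductive hypothesis combined with bi-multiplicativity, and then matching the surviving singleton sum against $\mathcal{E}_{1_\chi}$ by absorbing $L_b$ into the nearest left-indexed entry $T_\theta$ (the paper's $p$), with the separate boundary case when no such index exists. Even your flagged subtleties (the $\prec_\chi$-predecessor bookkeeping and the two cases $p \neq -\infty$, $p = -\infty$ via Proposition \ref{propenhancedproperties}) are exactly the points the paper's two-case argument addresses.
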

\begin{proof}
The base case can be readily established by computing directly the cumulants of order two.

For the inductive step, suppose the result holds for all $\chi : \{1, \ldots, k\} \to \{\ell, r\}$ with $k < n$.  Fix $\chi : \{1,\ldots, n\} \to \{\ell, r\}$ and $T_k \in \mathcal{A}_{\chi(k)}$. 
Suppose that for some $q \in \set{1, \ldots, n}$ we have $\chi(q) = \ell$ and $T_q = L_b$ with $b \in B$, as the argument for the right side is similar.

Let
\[
p = \max\set{k \in \{1,\ldots, n\} \, \mid \, \chi(k) = \ell, k < q}.
\]
The proof is now divided into two cases.

\underline{\textit{Case 1: $p \neq -\infty$.}}
In this case we notice that
\begin{align*}
\kappa_{1_\chi}(T_1, \ldots, T_n)
&= \mathcal{E}_{1_\chi}(T_1, \ldots, T_n) - \sum_{\substack{\pi \in BNC(\chi)\\ \pi \neq 1_\chi}}\kappa_{\pi}(T_1, \ldots, T_n)    \\
&= \mathcal{E}_{1_\chi}(T_1, \ldots, T_n) - \sum_{\substack{\pi \in BNC(\chi)\\ \set{q} \in \pi}}\kappa_{\pi}(T_1, \ldots, T_{p-1}, T_p, T_{p+1}, \ldots, T_{q-1}, L_b, T_{q+1}, \ldots, T_n)    \\
&= \mathcal{E}_{1_\chi}(T_1, \ldots, T_n) - \sum_{\sigma \in BNC(\chi|_{\setminus q})}\kappa_{\sigma}(T_1, \ldots, T_{p-1}, T_p L_b, T_{p+1}, \ldots, T_{q-1}, T_{q+1}, \ldots, T_n),
\end{align*}
by induction and Proposition \ref{propenhancedproperties}.  Since
\begin{align*}
\mathcal{E}_{1_\chi}(T_1, \ldots, T_n) &= E(T_1\cdots T_n)   \\
 &= E(T_1 \cdots T_{q-1} L_b T_{q+1} \cdots T_n) \\
 &=  E(T_1 \cdots T_{p-1} L_b T_{p+1}\cdots T_n) \\
 &=  \sum_{\sigma \in BNC(\chi|_{\setminus q})}\kappa_{\sigma}(T_1, \ldots, T_{p-1}, T_p L_b, T_{p+1}, \ldots, T_{q-1}, T_{q+1}, \ldots, T_n),
\end{align*}
the proof is complete in this case.

\underline{\textit{Case 2: $p = -\infty$.}}  In this case, notice that
\begin{align*}
\kappa_{1_\chi}(T_1, \ldots, T_n)
&= \mathcal{E}_{1_\chi}(T_1, \ldots, T_n) - \sum_{\substack{\pi \in BNC(\chi)\\ \pi \neq 1_\chi}}\kappa_{\pi}(T_1, \ldots, T_n)    \\
 &= \mathcal{E}_{1_\chi}(T_1, \ldots, T_n) - \sum_{\substack{\pi \in BNC(\chi)\\ \set{q} \in \pi}}\kappa_{\pi}(T_1, \ldots, T_{q-1}, L_b, T_{q+1}, \ldots, T_n)    \\
 &= \mathcal{E}_{1_\chi}(T_1, \ldots, T_n) - \sum_{\sigma \in BNC(\chi|_{\setminus q})} b\kappa_{\sigma}(T_1, \ldots, T_{q-1}, T_{q+1}, \ldots, T_n),
\end{align*}
by induction and Proposition \ref{propenhancedproperties}.  Since
\begin{align*}
\mathcal{E}_{1_\chi}(T_1, \ldots, T_n) &= E(T_1\cdots T_n)   \\
 &= E(T_1 \cdots T_{q-1} L_b T_{q+1} \cdots T_n) \\
 &=   E(L_bT_1 \cdots T_{q-1} T_{q+1}\cdots T_n) \\
  &=   bE(T_1 \cdots T_{q-1} T_{q+1}\cdots T_n) \\
 &=  \sum_{\sigma \in BNC(\chi|_{\setminus q})} b\kappa_{\sigma}(T_1, \ldots, T_{q-1}, T_{q+1}, \ldots, T_n),
\end{align*}
the proof is complete in this case as well.
\end{proof}

\section{Universal Moment Polynomials for Bi-Free Families with Amalgamation}
\label{sec:universalmomentpolysforbifreewithamalgamation}

In this section, we will generalize \cite{CNS2014}*{Corollary 4.1.2} to demonstrate that algebras will be bi-free with amalgamation over $B$ if and only if certain moment expressions hold.  To do so, we will need to extend the definition of $E_\pi(T_1,\ldots, T_n)$ to an extension of the shaded $LR$ diagrams as defined in Section \ref{sec:BiNonCrossingPartitionsAndDiagrams}. 


\subsection{Equivalence of Bi-Free with Amalgamation and Universal Moment Polynomials}

\begin{defn}
Let $\chi : \{1,\ldots, n\} \to \{\ell, r\}$ and let $\epsilon : \{1,\ldots, n\} \to K$.
Let $LR^\lat_k(\chi, \epsilon)$ denote the closure of $LR_k(\chi, \epsilon)$ under lateral refinement.
Observe that every diagram in $LR^\lat_k(\chi, \epsilon)$ still has $k$ strings reaching its top, as lateral refinements may only introduce cuts between ribs.
Given $D_1, D_2 \in LR^\lat_k(\chi, \epsilon)$, we write $D_2 \lrleq D_1$ if $D_2$ can be obtained by laterally refining $D_1$.
Finally, we denote
\[
LR^\lat(\chi, \epsilon) := \bigcup_{k\geq 0} LR^\lat_k(\chi, \epsilon).
\]
\end{defn}

\begin{defn}
\label{defn:ELRrecursiveproof}
Let $\{(\X_k, \mathring{\X}_k, p_k)\}_{k \in K}$ be $B$-$B$-bimodules with specified $B$-vector states, let $\lambda_k$ and $\rho_k$ be as defined in Construction \ref{cons:freeproductconstruction}, and let $\X = (\ast_B)_{k \in K} \X_k$.  Let $\chi : \{1,\ldots, n\} \to \{\ell, r\}$, $\epsilon : \{1,\ldots, n\} \to K$, $D \in LR^\lat(\chi, \epsilon)$, and $T_k \in \mathcal{L}_{\chi(k)}(\X_{\epsilon(k)})$.  Define $\mu_k(T_k) = \lambda_{\epsilon(k)}(T_k)$ if $\chi(k) = \ell$ and $\mu_k(T_k) = \rho_{\epsilon(k)}(T_k)$ if $\chi(k) = r$.  We define $E_D(\mu_1(T_1),\ldots, \mu_n(T_n))$ recursively as follows: Apply the same recursive process as in Definition \ref{defn:recursivedefinitionofEpi} until every block of $\pi$ has a spine reaching the top. If every block of $D$ has a spine reaching the top, enumerate them from left to right according to their spines as $V_1,\ldots, V_m$ with $V_j=\{k_{j,1}<\cdots <k_{j,q_j}\}$, and set
\[
	E_D(\mu_1(T_1),\ldots,\mu_n(T_n))=[(1-p_{\epsilon(k_{1,1})})T_{k_{1,1}} \cdots T_{k_{1,q_1}} 1_B] \otimes \cdots \otimes [(1-p_{\epsilon(k_{m,1})})T_{k_{m,1}} \cdots T_{k_{m,q_m}} 1_B].
\]
\end{defn}

\begin{lem}
\label{lem:actingonFreeproductspace}
With the notation as in Definition \ref{defn:ELRrecursiveproof},
\[
\mu_1(T_1) \cdots \mu_n(T_n)1_B = \sum_{k=0}^n \sum_{D \in LR^\lat_k(\chi, \epsilon)} \left[ \sum_{\substack{ D' \in LR_k(\chi, \epsilon) \\ D' \geq_{\mathrm{lat}} D}} (-1)^{|D| - |D'|}   \right] E_{D}(\mu_1(T_1),\ldots, \mu_n(T_n)),
\]
where $|D|$ and $|D'|$ denote the number of blocks of $D$ and $D'$ respectively.  In particular,
\[
E_{\mathcal{L}(\X)}(\mu_1(T_1) \mu_2(T_2) \cdots \mu_n(T_n)) = \sum_{\pi \in BNC(\chi)} \left[ \sum_{\substack{\sigma\in BNC(\chi)\\\pi\leq\sigma\leq\e}}\mu_{BNC}(\pi, \sigma) \right] \mathcal{E}_{\pi}(\mu_1(T_1),\ldots, \mu_n(T_n)).
\]
\end{lem}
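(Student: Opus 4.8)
The plan is to establish the first (tensor-valued) identity by induction on $n$ and then deduce the scalar-valued identity by applying $E_{\mathcal{L}(\X)}$ and reorganizing the surviving terms via Theorem \ref{thm:twosums}. The base cases $n = 0, 1$ are immediate from the formulas for $\lambda_k$ and $\rho_k$ in Construction \ref{cons:freeproductconstruction} (for $n=1$ one simply reads off $\lambda_{\e(1)}(T_1)1_B = E_{\e(1)}(T_1) \oplus [(1-p_{\e(1)})T_1 1_B]$, matching the two elements of $LR(\chi,\e)$). For the inductive step I would use the induction hypothesis to write $\mu_2(T_2)\cdots\mu_n(T_n)1_B$ as the prescribed signed sum over $D \in LR^\lat(\bar\chi,\bar\e)$ of the vectors $E_D(\mu_2(T_2),\ldots,\mu_n(T_n))$, and then act on the left by $\mu_1(T_1)$, which is $\lambda_{\e(1)}(T_1)$ when $\chi(1) = \ell$ and $\rho_{\e(1)}(T_1)$ when $\chi(1) = r$.

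Acting on each tensor $E_D(\ldots)$ by the explicit formula from Construction \ref{cons:freeproductconstruction} produces a splitting governed by two dichotomies that exactly mirror the recursive construction of $LR(\chi,\e)$ from $LR(\bar\chi,\bar\e)$. First, whether the leading tensor factor of $E_D(\ldots)$ carries shade $\e(1)$: if it does, the new node attaches to the existing spine through the $p_{\e(1)}$ term; if not (including the vacuum case), the new node begins a fresh block through the $L_{E_{\e(1)}(T_1)}$ term. Second, in each case the $(1-p_{\e(1)})$ alternative extends the resulting spine to the top while the $p_{\e(1)}$ alternative caps it, which is precisely the choice to ``extend a string from the added node or not''. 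Thus each $E_D$ term yields exactly the two extensions of $D$ promised by the definition of $LR(\chi,\e)$, with the attachment/creation and extension/capping choices recorded by the new node.

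The crux, and the step I expect to be the main obstacle, is verifying that after this splitting the signed coefficients recombine into the coefficients attached to the diagrams of $LR^\lat(\chi,\e)$, i.e.\ that the statistic $|D|$ and the lateral-refinement ordering $\lrgeq$ transform compatibly with the operator action. This is a delicate sign-and-refinement bookkeeping that I expect to carry out by a further internal induction, essentially identical in structure to the scalar computation of \cite{CNS2014}: the operator-valued setting replaces the scalar vector states and inner products used there by the maps $p_k$ and $E_k$ but leaves the underlying combinatorics of $LR$ diagrams, lateral cuts, and the inclusion-exclusion signs unchanged, so the bookkeeping carries over verbatim with $B$-valued quantities in place of scalars.

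Granting the first identity, the second follows quickly. By Definition \ref{defn:expectationofLXontoB} we have $E_{\mathcal{L}(\X)}(\mu_1(T_1)\cdots\mu_n(T_n)) = p(\mu_1(T_1)\cdots\mu_n(T_n)1_B)$, so I apply $p$ to the first identity. For $D \in LR^\lat_k(\chi,\e)$ with $k \geq 1$, the vector $E_D(\ldots)$ is a tensor of length $k \geq 1$ lying in $\mathring{\X}$, hence is annihilated by $p$; only the $k = 0$ terms survive. Each $D \in LR^\lat_0(\chi,\e)$ corresponds to a unique $\pi = \pi(D) \in BNC(\chi)$ with $\pi \leq \e$ and $|D| = |\pi|$, and for such $D$ no block ever acquires a spine reaching the top, so the recursion of Definition \ref{defn:ELRrecursiveproof} reduces entirely to that of Definition \ref{defn:recursivedefinitionofEpi} with $E = E_{\mathcal{L}(\X)}$, giving $E_D(\mu_1(T_1),\ldots,\mu_n(T_n)) = \mathcal{E}_\pi(\mu_1(T_1),\ldots,\mu_n(T_n))$. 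Regrouping the surviving terms by $\pi$ and invoking Theorem \ref{thm:twosums} to rewrite the signed sum $\sum_{D' \in LR_0(\chi,\e),\ D' \lrgeq D}(-1)^{|D|-|D'|}$ as $\sum_{\pi \leq \sigma \leq \e}\mu_{BNC}(\pi,\sigma)$ produces the claimed formula; the outer sum may then be extended to all $\pi \in BNC(\chi)$, since any $\pi \not\leq \e$ contributes an empty inner sum and hence nothing.
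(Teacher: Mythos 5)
Your overall skeleton---induction on $n$, letting $\mu_1(T_1)$ act on the tensor decomposition supplied by the inductive hypothesis, and then deducing the scalar identity by applying $p$, identifying $E_D(\mu_1(T_1),\ldots,\mu_n(T_n))=\mathcal{E}_\pi(\mu_1(T_1),\ldots,\mu_n(T_n))$ for $D\in LR^{\lat}_0(\chi,\epsilon)$, and invoking Theorem \ref{thm:twosums}---is the same as the paper's, and your treatment of the second claim is correct and complete. The gap is in the inductive step. Your structural claim that ``each $E_D$ term yields exactly the two extensions of $D$ promised by the definition of $LR(\chi,\epsilon)$'' is false in precisely the case that drives the whole lemma: when the left-most spine of $D$ reaching the top carries the same shade as the new node (i.e.\ $\epsilon(1)=\epsilon(k_1)$, the paper's Case 3), one must expand $T_1(1-p_{\epsilon(1)})S_11_B = T_1S_11_B - T_1E(S_1)$, so \emph{each} of the two direct-sum components splits again, producing \emph{four} terms: the two honest extensions lying in $LR(\chi,\epsilon)$, plus, with coefficient $-1$, their lateral refinements in which the spine is cut directly beneath the new node. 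These negative, laterally refined terms are the only source of the closure $LR^{\lat}(\chi,\epsilon)$ and of the signs $(-1)^{|D|-|D'|}$ appearing in the statement; a two-terms-per-diagram account has nothing from which such terms could ``recombine,'' so the sign-and-refinement bookkeeping you defer cannot be carried out from the premises you set up.

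The companion claim that the scalar bookkeeping of \cite{CNS2014} carries over ``verbatim with $B$-valued quantities in place of scalars'' also fails, and for an operator-valued (not combinatorial) reason: $B$ is noncommutative, so when a spine is capped, the resulting factor $L_{E(S_1)}$ lands adjacent to the wrong tensor factor (it multiplies the next spine's vector), whereas Definition \ref{defn:ELRrecursiveproof} requires the expectations of the $\chi$-interval sub-blocks of $S_1$ to be inserted at specific positions dictated by the refined diagram. Repositioning these $B$-valued factors is genuine work: the paper does it by factoring $E(S_1)$ via Proposition \ref{prop:propertiesofEforLX} and then invoking the bi-multiplicativity results, Lemmas \ref{lemproductofdisjointblocksforE} and \ref{lemreductionofbimultiplicativeinsideablockforE}. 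The reduction to $B=\mathbb{C}$ is legitimate only for verifying the numerical coefficients once the terms have been matched up with diagrams---which is exactly how the paper uses it---not as a substitute for matching the terms themselves.
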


\begin{proof}
To begin, note that the second claim follows from the first by Definition \ref{defn:ELRrecursiveproof} and by Theorem \ref{thm:twosums}.  To prove the main claim we will proceed by induction on the number of operators $n$.  The case $n = 1$ is trivial.

For the inductive step, we will assume that $\chi(1) = \ell$ as the proof in the case $\chi(1) = r$ will follow by similar arguments. Let $\chi_0 = \chi|_{\{2,\ldots, n\}}$ and $\epsilon_0 = \epsilon|_{\{2,\ldots, n\}}$.  By induction, we obtain that
\[
\mu_2(T_2) \cdots \mu_n(T_n)1_B = \sum_{k=0}^{n-1} \sum_{D_0 \in LR^\lat_k(\chi_0, \epsilon_0)} \left[ \sum_{\substack{ D'_0 \in LR_k(\chi_0, \epsilon_0) \\ D'_0 \geq_{\mathrm{lat}} D_0}} (-1)^{|D_0| - |D'_0|}   \right] E_{D_0}(\mu_2(T_2),\ldots, \mu_n(T_n)).
\]
The result will follow by applying $\lambda_1(T_1)$ to each $E_{D_0}(\mu_2(T_2),\ldots, \mu_n(T_n))$, checking the correct terms appear, collecting the same terms, and verifying the coefficients are correct.

Fix $D_0 \in LR^\lat_k(\chi_0, \epsilon_0)$.  We can write
\[
E_{D_0}(\mu_2(T_2),\ldots, \mu_n(T_n)) = [(1-p_{\epsilon(k_{1})}) S_1 1_B] \otimes \cdots \otimes [(1-p_{\epsilon(k_{m})}) S_m1_B]
\]
for some operators $S_p \in \mathrm{alg}(\lambda_{k_p}(\mathcal{L}_\ell(\X_{k_p})), \rho_{k_p}(\mathcal{L}_r(\X_{k_p})))$.  To demonstrate the correct terms appear, we divide the analysis into three cases.

\underline{\textit{Case 1: $m = 0$.}}  In this case $E_{D_0}(\mu_2(T_2),\ldots, \mu_n(T_n)) = b \in B$.  As such, we see that
\begin{align*}
\lambda_{\epsilon(1)}(T_1)E_{D_0}(\mu_2(T_2),\ldots, \mu_n(T_n)) & = E(T_1)b \oplus [(1-p_{\epsilon(1)})T_1 b]
\end{align*}
If $D_1$ is the $LR$-diagram obtained from $D_0$ by placing a node shaded $\epsilon(1)$ at the top and $D_2$ is the $LR$-diagram obtained from $D_0$ by placing a node $\epsilon(1)$ at the top and drawing a spine from this node to the top, then since 
\[
E(\mu_1(T_1) L_b) = E(\mu_1(T_1) R_b) = E(R_b \mu_1(T_1)) = E(T_1) b
\]
and
\[
T_1 b = T_1 R_b 1_B = T_1 L_b 1_B
\]
one easily sees that 
\[
E(T_1)b = E_{D_1}(\mu_1(T_1), \mu_2(T_2),\ldots, \mu_n(T_n)) \quad \mbox{and} \quad (1-p_{\epsilon(1)})T_1 b = E_{D_2}(\mu_1(T_1), \mu_2(T_2),\ldots, \mu_n(T_n)).
\]

\underline{\textit{Case 2: $m \neq 0$ and $\epsilon(1) \neq \epsilon(k_1)$.}}  In this case, $(1-p_{\epsilon(k_{1})}) S_1 1_B$ is in a space orthogonal to $\mathring{\X}_{\epsilon(1)}$.  Thus
\begin{align*}
	\lambda_{\epsilon(1)}(T_1)E_{D_0}(\mu_2(T_2),\ldots, \mu_n(T_n))  =& \left([L_{E(T_1)}(1-p_{\epsilon(k_{1})}) S_1 1_B] \otimes \cdots \otimes [(1-p_{\epsilon(k_{m})}) S_m1_B]\right) \\
			&  \oplus \left([(1-p_{\epsilon(1)})T_1 1_B]  \otimes [(1-p_{\epsilon(k_{1})}) S_1 1_B] \otimes \cdots \otimes [(1-p_{\epsilon(k_{m})}) S_m1_B]\right).
\end{align*}
If $D_1$ is the $LR$-diagram obtained from $D_0$ by placing a node shaded $\epsilon(1)$ at the top and $D_2$ is the $LR$-diagram obtained from $D_0$ by placing a node $\epsilon(1)$ at the top and drawing a spine from this node to the top, then since 
\[
L_{E(T_1)}(1-p_{\epsilon(k_{1})}) S_1 1_B = (1-p_{\epsilon(k_{1})}) L_{E(T_1)}S_1 1_B,
\]
one easily sees that 
\begin{align*}
[L_{E(T_1)}(1-p_{\epsilon(k_{1})}) S_1 1_B] \otimes \cdots \otimes [(1-p_{\epsilon(k_{m})}) S_m1_B] = E_{D_1}(\mu_1(T_1), \mu_2(T_2),\ldots, \mu_n(T_n)) \\
[(1-p_{\epsilon(1)})T_1 1_B]  \otimes [(1-p_{\epsilon(k_{1})}) S_1 1_B] \otimes \cdots \otimes [(1-p_{\epsilon(k_{m})}) S_m1_B] = E_{D_2}(\mu_1(T_1), \mu_2(T_2),\ldots, \mu_n(T_n)).
\end{align*}

\underline{\textit{Case 3: $m \neq 0$ and $\epsilon(1) = \epsilon(k_1)$.}}  In this case, there is a spine in $D$ that reaches the top and is the same shading as $T_1$.  Thus $(1-p_{\epsilon(k_{1})}) S_1 1_B \in\mathring{\X}_{\epsilon(1)}$, so
\begin{align*}
	\lambda_{\epsilon(1)}(T_1)E_{D_0}(\mu_2(T_2),\ldots, \mu_n(T_n))  =& \left([L_{p_{\epsilon(1)}\left(T_1(1-p_{\epsilon(1)}) S_1 1_B\right)}(1-p_{\epsilon(k_{2})}) S_2 1_B] \otimes \cdots \otimes [(1-p_{\epsilon(k_{m})}) S_m1_B]\right) \\
		&  \oplus \left([(1-p_{\epsilon(1)})T_1(1-p_{\epsilon(1)}) S_1 1_B] \otimes \cdots \otimes [(1-p_{\epsilon(k_{m})}) S_m1_B]\right).
\end{align*}
Expanding $T_1(1-p_{\epsilon(1)}) S_11_B = T_1S_11_B - T_1E(S_1)$, the above becomes
\begin{align*}
	\lambda_{\epsilon(1)}(T_1)E_{D_0}(\mu_2(T_2),\ldots, \mu_n(T_n))  =& \left([L_{E(T_1S_1)}(1-p_{\epsilon(k_{2})}) S_2 1_B] \otimes \cdots \otimes [(1-p_{\epsilon(k_{m})}) S_m1_B]\right) \\
		& \oplus (-1) \left([L_{p_{\epsilon(1)}\left(T_1E(S_1)\right)}(1-p_{\epsilon(k_{2})}) S_2 1_B] \otimes \cdots \otimes [(1-p_{\epsilon(k_{m})}) S_m1_B]\right) \\
		&  \oplus \left([(1-p_{\epsilon(1)})T_1S_1 1_B] \otimes \cdots \otimes [(1-p_{\epsilon(k_{m})}) S_m1_B]\right)\\
		&  \oplus (-1)\left([(1-p_{\epsilon(1)})T_1E(S_1)] \otimes \cdots \otimes [(1-p_{\epsilon(k_{m})}) S_m1_B]\right).
\end{align*}
Let $D_1$ be the $LR$-diagram obtained from $D_0$ by placing a node shaded $\epsilon(1)$ at the top and terminating the left-most spine at this node, $D_2$ be the $LR$-diagram obtained by laterally refining $D_1$ by cutting the spine attached to the top node directly beneath the top node, $D_3$ be the $LR$-diagram obtained from $D_0$ by placing a node shaded $\epsilon(1)$ at the top and connecting this node to the left-most spine, and $D_4$ be the $LR$-diagram obtained by laterally refining $D_3$ by cutting the spine attached to the top node directly beneath the top node.  As in the previous case, we see (by applying Lemma \ref{lemproductofdisjointblocksforE} if $m = 1$) that
\[
[L_{E(T_1S_1)}(1-p_{\epsilon(k_{2})}) S_2 1_B] \otimes \cdots \otimes [(1-p_{\epsilon(k_{m})}) S_m1_B] = E_{D_1}(\mu_1(T_1), \mu_2(T_2),\ldots, \mu_n(T_n))
\]
and 
\[
[(1-p_{\epsilon(1)})T_1S_1 1_B] \otimes \cdots \otimes [(1-p_{\epsilon(k_{m})}) S_m1_B] = E_{D_3}(\mu_1(T_1), \mu_2(T_2),\ldots, \mu_n(T_n)).
\]

We will demonstrate that
\[
[L_{p_{\epsilon(1)}\left(T_1E(S_1)\right)}(1-p_{\epsilon(k_{2})}) S_2 1_B] \otimes \cdots \otimes [(1-p_{\epsilon(k_{m})}) S_m1_B] = E_{D_2}(\mu_1(T_1), \mu_2(T_2),\ldots, \mu_n(T_n))
\]
and leave to the reader the proof that
\[
[(1-p_{\epsilon(1)})T_1E(S_1)] \otimes \cdots \otimes [(1-p_{\epsilon(k_{m})}) S_m1_B] = E_{D_4}(\mu_1(T_1), \mu_2(T_2),\ldots, \mu_n(T_n)).
\]
Notice that 
\[
L_{p_{\epsilon(1)}\left(T_1E(S_1)\right)} = L_{p_{\epsilon(1)}\left(T_1R_{E(S_1)}1_B\right)} = L_{p_{\epsilon(1)}\left(T_11_B\right) E(S_1)} = L_{p_{\epsilon(1)}\left(T_11_B\right)}L_{E(S_1)},
\]
and so
\[
L_{p_{\epsilon(1)}\left(T_1E(S_1)\right)}(1-p_{\epsilon(k_{2})}) S_2 1_B = (1-p_{\epsilon(k_{2})}) L_{p_{\epsilon(1)}\left(T_11_B\right)}L_{E(S_1)}S_2 1_B.
\]
Thus, unless $m = 1$, $L_{p_{\epsilon(1)}\left(T_11_B\right)}$ appears as it should in the definition of $E_{D_2}(\mu_1(T_1), \mu_2(T_2),\ldots, \mu_n(T_n))$ although the $E(S_1)$ term may not be as it should.  To obtain the desired result, we make the following corrections.

Recall that $S_1$ corresponds to the left-most-spine of $D_0$ reaching the top.
Let $W \subseteq \{2,\ldots, n\}$ be the set of $k$ for which $T_k$ appears in the expression for $S_1$.
Note that $S_1$ will be of the form $C_{E(S'_1)} C_{E(S'_2)} \cdots C_{E(S'_{p-1})} S'_p$, where each $E(S'_k)$ is the moment of a disjoint $\chi$-interval $W_k$ composed of blocks of $D_2$ with the property that $\min_{\prec_\chi}(W_k)$ and $\max_{\prec_\chi}(W_k)$ lie in the same block ($C$ denotes either $L$ or $R$, as appropriate).
Observe that $W = \bigcup^p_{k=1} W_k$.
Therefore, by Proposition \ref{prop:propertiesofEforLX}
\[
E(S_1) = E(C_{E(S'_1)} C_{E(S'_2)} \cdots C_{E(S'_{p-1})} S'_p)= E(S''_1) \cdots E(S''_p),
\]
where the $S''_1,\ldots, S''_p$ are $S'_1,\ldots, S'_p$ up to reordering by $\prec_\chi$. Let $W'_1,\ldots, W'_k$ be $W_1,\ldots, W_k$ under this new ordering. 

Through Lemma \ref{lemproductofdisjointblocksforE} in the case $m=1$ and computations similar to the reverse of those used in cases 1 and 2 of Lemma \ref{lemreductionofbimultiplicativeinsideablockforE} one can verify these terms can be moved into the correct positions.

Finally, it is clear that the coefficients of each $E_D(\mu_1(T_1),\ldots, \mu_n(T_n))$ are correct for each $D \in LR^\lat(\chi, \epsilon)$.
Alternatively, one can check the coefficients in the second claim by noting that the coefficients did not depend on the algebra $B$, setting $B=\mathbb{C}$, and using \cite{CNS2014}*{Corollary 4.2.5}.
\end{proof}
\begin{thm}
\label{thm:bifreeequivalenttouniversalpolys}
Let $(\mathcal{A}, E_\mathcal{A}, \varepsilon)$ be a $B$-$B$-non-commutative probability space and let $\{(C_k, D_k)\}_{k \in K}$ be a family of pair of $B$-faces of $\A$.  Then $\{(C_k, D_k)\}_{k \in K}$ are bi-free with amalgamation over $B$ if and only if for all $\chi : \{1,\ldots, n\} \to \{\ell, r\}$, $\epsilon : \{1,\ldots, n\} \to K$, and 
\[
T_k \in \left\{
\begin{array}{ll}
C_{\epsilon(k)} & \mbox{if } \chi(k) = \ell  \\
D_{\epsilon(k)} & \mbox{if } \chi(k) = r
\end{array} \right.,
\]
the formula
\[
E_{\A}(T_1 \cdots T_n) = \sum_{\pi \in BNC(\chi)} \left[ \sum_{\substack{\sigma\in BNC(\chi)\\\pi\leq\sigma\leq\e}}\mu_{BNC}(\pi, \sigma) \right] \mathcal{E}_{\pi}(T_1,\ldots, T_n)
\]
holds.
\end{thm}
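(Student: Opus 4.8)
The plan is to use Lemma \ref{lem:actingonFreeproductspace} as the computational engine for both implications, together with the observation recorded after Definition \ref{defn:recursivedefinitionofEpi} that each value $\mathcal{E}_\pi(T_1,\dots,T_n)$ is built recursively out of $E$, products, and insertions of operators $L_b,R_b$, and hence depends only on the $B$-valued joint distribution of $T_1,\dots,T_n$ together with $\varepsilon(B\otimes B^{\mathrm{op}})$. I would first note that in the asserted formula only partitions $\pi\leq\epsilon$ contribute: if $\pi\not\leq\epsilon$ then no $\sigma$ satisfies $\pi\leq\sigma\leq\epsilon$, so the bracketed coefficient is an empty sum. Consequently every relevant block of such a $\pi$ lies within a single value of $\epsilon$, so the operators in each block all come from a single $\mathrm{alg}(C_k\cup D_k)$.

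For the forward direction, suppose $\{(C_k,D_k)\}_{k\in K}$ is bi-free over $B$, with witnessing data $\{(\X_k,\mathring{\X}_k,p_k)\}$ and representations $l_k,r_k$. Writing $\hat T_k=(\lambda_{\epsilon(k)}\circ l_{\epsilon(k)})(T_k)$ when $\chi(k)=\ell$ and $\hat T_k=(\rho_{\epsilon(k)}\circ r_{\epsilon(k)})(T_k)$ when $\chi(k)=r$, and $\X=\ast_{k\in K}\X_k$, Definition \ref{defn:pairofBfaces} gives $E_\A(T_1\cdots T_n)=E_{\mathcal{L}(\X)}(\hat T_1\cdots\hat T_n)$, and more generally that all joint $B$-moments agree. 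By the invariance just noted, $\mathcal{E}_\pi(\hat T_1,\dots,\hat T_n)=\mathcal{E}_\pi(T_1,\dots,T_n)$ for every $\pi$. Since $\hat T_k=\mu_k(S_k)$ with $S_k=l_{\epsilon(k)}(T_k)\in\mathcal{L}_\ell(\X_{\epsilon(k)})$ or $S_k=r_{\epsilon(k)}(T_k)\in\mathcal{L}_r(\X_{\epsilon(k)})$ in the notation of Lemma \ref{lem:actingonFreeproductspace}, applying that lemma to $E_{\mathcal{L}(\X)}(\hat T_1\cdots\hat T_n)$ and then substituting $\mathcal{E}_\pi(\hat T_1,\dots,\hat T_n)=\mathcal{E}_\pi(T_1,\dots,T_n)$ produces exactly the desired formula.

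For the reverse direction, assume the moment formula holds. For each $k$ I would apply Theorem \ref{thm:representingbbncps} to the $B$-$B$-non-commutative probability subspace $\mathrm{alg}(C_k\cup D_k)$ to obtain $(\X_k,\mathring{\X}_k,p_k)$ and a unital homomorphism $\theta_k$ with $\theta_k(L_b)=L_b$, $\theta_k(R_b)=R_b$, $\theta_k(C_k)\subseteq\mathcal{L}_\ell(\X_k)$, $\theta_k(D_k)\subseteq\mathcal{L}_r(\X_k)$, and $E_{\mathcal{L}(\X_k)}\circ\theta_k=E_\A$; then set $l_k=\theta_k|_{C_k}$ and $r_k=\theta_k|_{D_k}$. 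Forming $\X=\ast_{k\in K}\X_k$ and the images $\hat T_k$ as before, Lemma \ref{lem:actingonFreeproductspace} computes $E_{\mathcal{L}(\X)}(\hat T_1\cdots\hat T_n)$ as the same universal formula but with $\mathcal{E}_\pi(\hat T_1,\dots,\hat T_n)$ in place of $\mathcal{E}_\pi(T_1,\dots,T_n)$. It then suffices to verify $\mathcal{E}_\pi(\hat T_1,\dots,\hat T_n)=\mathcal{E}_\pi(T_1,\dots,T_n)$ for every $\pi\leq\epsilon$; granting this, the two universal formulas coincide, so $E_{\mathcal{L}(\X)}(\hat T_1\cdots\hat T_n)=E_\A(T_1\cdots T_n)$ for all words, the joint distributions agree, and bi-freeness follows from Definition \ref{defn:pairofBfaces}.

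The heart of the matter, and the step I expect to be the main obstacle, is this identity $\mathcal{E}_\pi(\hat T_1,\dots,\hat T_n)=\mathcal{E}_\pi(T_1,\dots,T_n)$ for $\pi\leq\epsilon$. Because each block of such a $\pi$ sits inside a single $\mathrm{alg}(C_k\cup D_k)$, the recursive computation of $\mathcal{E}_\pi$ only ever forms products of operators with a common index $k$ (the nested $L_b,R_b$ coming from $\mathcal{E}_{\pi|_V}$ of sub-blocks again have a common index), so the identity reduces to two facts about single-index words. First, that $\theta_k$ preserves the $B$-valued distribution and the operators $L_b,R_b$, which is Theorem \ref{thm:representingbbncps}. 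Second, that the free product embeds each factor $\mathcal{L}(\X_k)$ into $\mathcal{L}(\X)$ in a state-preserving way on all words, i.e. $E_{\mathcal{L}(\X)}(\mu_k(S_{i_1})\cdots\mu_k(S_{i_j}))=E_{\mathcal{L}(\X_k)}(S_{i_1}\cdots S_{i_j})$ for $S_i\in\mathcal{L}_\ell(\X_k)\cup\mathcal{L}_r(\X_k)$. The latter I would extract from the explicit formulas in Construction \ref{cons:freeproductconstruction}: one checks by induction that $\lambda_k(T)\xi=T\xi$ and $\rho_k(S)\xi=S\xi$ for all $\xi\in\X_k\subseteq\X$ (the alternating-index condition keeps such a word inside $\X_k$), whence $\mu_k(S_{i_1})\cdots\mu_k(S_{i_j})1_B=S_{i_1}\cdots S_{i_j}1_B$ and applying $p$ gives the claim. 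Combining these two facts with the bi-multiplicativity of $\mathcal{E}$ from Theorem \ref{thm:samantha}, which guarantees that the block-by-block factorisation is respected on both sides, yields the required identity and completes the argument.
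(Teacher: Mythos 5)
Your proposal is correct and follows essentially the same route as the paper: the forward implication is Lemma \ref{lem:actingonFreeproductspace} combined with the fact that $\mathcal{E}_\pi$ depends only on the $B$-valued joint distribution, and the converse uses Theorem \ref{thm:representingbbncps} to manufacture representations and then applies Lemma \ref{lem:actingonFreeproductspace} again. The only differences are cosmetic --- you represent each $\mathrm{alg}(C_k \cup D_k)$ separately where the paper represents $\A$ once and takes copies, and you spell out the transfer identity $\mathcal{E}_\pi(\hat T_1,\ldots,\hat T_n)=\mathcal{E}_\pi(T_1,\ldots,T_n)$ for $\pi\leq\epsilon$ (via state-preservation of the free product embedding on single-index words) that the paper leaves implicit.
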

\begin{proof}
If $\{(C_k, D_k)\}_{k \in K}$ are bi-free with amalgamation over $B$, then Lemma \ref{lem:actingonFreeproductspace} implies the desired formula holds.

Conversely, suppose that the formula holds.  By Theorem \ref{thm:representingbbncps} there exists a $B$-$B$-bimodule with a specified $B$-vector state $(\X, \mathring{\X}, p)$ and a unital homomorphism $\theta : \mathcal{A} \to \mathcal{L}(\X)$ such that 
\[
\theta(\varepsilon(b_1 \otimes b_2)) = L_{b_1} R_{b_2}, \quad \theta(\mathcal{A}_\ell) \subseteq \mathcal{L}_\ell(\X),\quad \theta(\mathcal{A}_r) \subseteq \mathcal{L}_r(\X), \quad \mathrm{and}\quad E_{\mathcal{L}(\X)}(\theta(T)) = E_\mathcal{A}(T)
\]
for all $b_1, b_2 \in B$ and $T \in \mathcal{A}$.  For each $k \in K$, let $(\X_k, \mathring{\X}_k, p_k)$ be a copy of $(\X, \mathring{\X}, p)$ and $l_k$ and $r_k$ be copies of $\theta\colon \mathcal{A}\to \mathcal{L}(\X_k)$.  Since the formula holds, Lemma \ref{lem:actingonFreeproductspace} implies $\{(C_k,D_k)\}_{k\in K}$ are bi-free over $B$.
\end{proof}


\section{Additivity of Operator-Valued Bi-Free Cumulants}
\label{sec:additivity}

\subsection{Equivalence of Bi-Freeness with Vanishing of Mixed Cumulants}
We now state the operator-valued analogue of the main result of \cite{CNS2014}, namely, that bi-freeness of families of pairs of $B$-faces is equivalent to the vanishing of their mixed cumulants.

\begin{thm}
\label{thmequivalenceofbifreeandcombintoriallybifree}
Let $(\mathcal{A}, E, \varepsilon)$ be a $B$-$B$-non-commutative probability space and let $\{(C_k, D_k)\}_{k \in K}$ be a family of pairs of $B$-faces from $\A$.  Then $\{(C_k, D_k)\}_{k \in K}$ are bi-free with amalgamation over $B$ if and only if for all $\chi : \{1,\ldots, n\} \to \{\ell, r\}$, $\epsilon : \{1,\ldots, n\} \to K$, and 
\[
T_k \in \left\{
\begin{array}{ll}
C_{\epsilon(k)} & \mbox{if } \chi(k) = \ell  \\
D_{\epsilon(k)} & \mbox{if } \chi(k) = r
\end{array} \right.
\]
the equation
\[\kappa_{1_\chi}(T_1, \ldots, T_n) = 0\]
holds unless $\epsilon$ is constant.
\end{thm}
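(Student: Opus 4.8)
The plan is to route everything through the universal moment polynomial characterisation of Theorem \ref{thm:bifreeequivalenttouniversalpolys} and then translate that characterisation into a statement purely about cumulants via M\"obius inversion. Since $\kappa = \mathcal{E} \ast \mu_{BNC}$ (Definition \ref{def:kappa}), we have $\mathcal{E}_\pi = \sum_{\rho \leq \pi}\kappa_\rho$, so the left-hand side of the universal formula is $\mathcal{E}_{1_\chi}(T_1,\ldots,T_n) = \sum_{\rho \in BNC(\chi)}\kappa_\rho(T_1,\ldots,T_n)$. Substituting $\mathcal{E}_\pi = \sum_{\rho \leq \pi}\kappa_\rho$ into the right-hand side and interchanging the order of summation, the inner sum collapses via $\sum_{\rho \leq \pi \leq \sigma}\mu_{BNC}(\pi,\sigma) = \delta_{\rho,\sigma}$ (the defining relation $\zeta_{BNC}\ast\mu_{BNC} = \delta_{BNC}$), leaving $\sum_{\rho \leq \e}\kappa_\rho(T_1,\ldots,T_n)$. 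Thus, for each fixed $\chi$, $\e$, and choice of operators, the universal formula of Theorem \ref{thm:bifreeequivalenttouniversalpolys} is exactly equivalent to
\[
\sum_{\substack{\rho \in BNC(\chi)\\ \rho \not\leq \e}} \kappa_\rho(T_1,\ldots,T_n) = 0.
\]
It therefore remains to connect the vanishing of this sum with the vanishing of the individual full cumulants $\kappa_{1_\chi}$ for non-constant $\e$.

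The crux is a factorisation observation for a single term $\kappa_\rho$ with $\rho \not\leq \e$. Here $\rho \not\leq \e$ means $\e$ is non-constant on some block $V$ of $\rho$. Using that $\kappa$ is bi-multiplicative (Corollary \ref{cor:cumulantsarebimultiplicative}) and the reduction scheme of Remark \ref{remonbimultiplicative}, I would fully reduce $\kappa_\rho(T_1,\ldots,T_n)$ to a nested product of full cumulants, with one factor $\kappa_{1_{\chi_V}}((T_1,\ldots,T_n)|_V)$ for each block, the cumulants of blocks nested inside $V$ having been absorbed as $L_b$ or $R_b$ insertions adjacent to the entries indexed by $V$. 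The key point is that each such insertion lies in $\varepsilon(B \otimes 1_B) \subseteq C_{\e(k)}$ or $\varepsilon(1_B \otimes B^{\mathrm{op}}) \subseteq D_{\e(k)}$, so multiplying an entry $T_k$ by it keeps that entry inside its original face; hence the factor attached to $V$ is a genuine full cumulant of operators whose $K$-indices are exactly $\e|_V$, which is non-constant. (Should an insertion reduce to $L_0 = 0$, one instead invokes linearity together with Proposition \ref{prop:inputaL_borR_bandyougetzerocumulants}.)

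Granting this, the two directions follow quickly. For the implication that vanishing of mixed cumulants gives bi-freeness, the hypothesis says every mixed full cumulant vanishes, so the $V$-factor of each $\kappa_\rho$ with $\rho \not\leq \e$ is zero, whence $\kappa_\rho = 0$ and the displayed sum vanishes; Theorem \ref{thm:bifreeequivalenttouniversalpolys} then yields bi-freeness. For the converse, bi-freeness gives the universal formula and hence the vanishing of the displayed sum, and I would induct on $n$. Every $\rho \not\leq \e$ with $\rho \neq 1_\chi$ has at least two blocks, so its non-constant block $V$ satisfies $|V| < n$; by the factorisation its $V$-factor is a mixed cumulant of fewer than $n$ operators, which vanishes by the inductive hypothesis, forcing $\kappa_\rho = 0$. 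The only surviving term is $\rho = 1_\chi$, so $\kappa_{1_\chi}(T_1,\ldots,T_n) = 0$ for non-constant $\e$, completing the induction (the cases $n = 1, 2$ being immediate).

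The hard part will be the factorisation step: making the bi-multiplicative reduction of $\kappa_\rho$ precise enough to guarantee that the factor attached to the chosen block $V$ is literally a full cumulant of operators from the faces $\{(C_k, D_k)\}$ with index set $\e|_V$, rather than an expression in which the $L_b$/$R_b$ insertions have been distributed in a way that obscures the non-constancy. This amounts to tracking carefully, through Properties (iii) and (iv) of Definition \ref{defnbimultiplicative}, that the insertions always attach to existing entries (preserving their faces) and never create a stand-alone $B$-operator entry except in the harmless case covered by Proposition \ref{prop:inputaL_borR_bandyougetzerocumulants}.
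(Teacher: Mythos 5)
Your proposal is correct and follows essentially the same route as the paper: both reduce to Theorem \ref{thm:bifreeequivalenttouniversalpolys}, use M\"obius inversion to recast the universal moment polynomials as $\sum_{\rho \not\leq \epsilon} \kappa_\rho(T_1,\ldots,T_n) = 0$, and then kill every term with $\rho \neq 1_\chi$ by combining bi-multiplicativity (the block factorisation, with $L_b$/$R_b$ insertions remaining inside the faces) with induction on $n$. The factorisation step you flag as the ``hard part'' is exactly what the paper compresses into the phrase ``by induction and the recursive properties of bi-multiplicative functions,'' so your more careful tracking of it is a faithful elaboration rather than a different argument.
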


\begin{proof}
Suppose $\{(C_k, D_k)\}_{k \in K}$ are bi-free over $B$.  Fix a shading $\epsilon : \{1,\ldots, n\} \to K$ and let $\chi : \{1, \ldots, n\} \to \set{\ell, r}$.
If $T_1, \ldots, T_n$ are operators as above, Theorem \ref{thm:bifreeequivalenttouniversalpolys} implies
\[
	\mc E_{1_\chi}\paren{ T_1, \ldots, T_n}
= \sum_{{\pi \in BNC(\alpha)}} \left[  \sum_{\substack{\sigma \in BNC(\chi) \\ \pi \leq \sigma \leq \epsilon}} \mu_{BNC}(\pi, \sigma) \right] \mc E_{\pi}\paren{ T_1, \ldots, T_n}.
\]
Therefore
\[
\mc E_{1_\chi}\paren{ T_1, \ldots, T_n } = \sum_{\substack{\sigma \in BNC(\chi) \\ \sigma \leq \epsilon}} \kappa_\sigma\paren{T_1, \ldots, T_n}
\]
by Definition \ref{def:kappa}.
Using the above formula, we will proceed inductively to show that $\kappa_\sigma\paren{T_1, \ldots, T_n} = 0$ if $\sigma \in BNC(\chi)$ and $\sigma \nleq \epsilon$.
The base case where $n = 1$ is immediate.

For the inductive case, suppose the result holds for every $q < n$.
Suppose $\e$ is not constant and note $1_\chi \nleq \epsilon$.
Then
\[
\sum_{\sigma \in BNC(\chi)} \kappa_\sigma\paren{ T_1, \ldots, T_n }
= \mc E_{1_\chi}\paren{ T_1, \ldots, T_n }
= \sum_{\substack{\sigma \in BNC(\chi) \\ \sigma \leq \epsilon}} \kappa_\sigma\paren{T_1, \ldots, T_n}.
\]
On the other hand, by induction and the recursive properties of bi-multiplicative functions, $\kappa_\sigma\paren{T_1, \ldots, T_n} = 0$ provided $\sigma \in BNC(\chi)\setminus \set{1_\chi}$ and $\sigma \nleq \epsilon$.
Consequently,
\[
\sum_{\sigma \in BNC(\chi)} \kappa_\sigma\paren{ T_1, \ldots, T_n } = \kappa_{1_\chi}\paren{ T_1, \ldots, T_n } + \sum_{\substack{\sigma \in BNC(\chi) \\ \sigma \leq \epsilon}} \kappa_\sigma\paren{ T_1, \ldots, T_n }.
\]
Combining these two equations gives $\kappa_{1_\chi}\paren{ T_1, \ldots, T_n } = 0$, completing the inductive step.

Conversely, suppose all mixed cumulants vanish.
Then we have
\begin{align*}
\mathcal{E}_{1_\chi}\paren{ T_1, \ldots, T_n }
 &= \sum_{\sigma \in BNC(\chi)} \kappa_\sigma\paren{ T_1, \ldots, T_n }\\
 &=  \sum_{\substack{\sigma \in BNC(\chi) \\ \sigma \leq \epsilon}} \kappa_\sigma\paren{ T_1, \ldots, T_n } \\
 &=  \sum_{\substack{\sigma \in BNC(\chi) \\ \sigma \leq \epsilon}} \sum_{\substack{\pi \in BNC(\chi)\\ \pi \leq \sigma}}\mathcal{E}_\pi\paren{ T_1, \ldots, T_n } \mu_{BNC}(\pi, \sigma) \\
 &=  \sum_{{\pi\in BNC(\chi) }} \left[\sum_{\substack{\sigma \in BNC(\chi)\\ \pi \leq \sigma \leq \epsilon}} \mu_{BNC}(\pi, \sigma) \right] \mathcal{E}_\pi\paren{ T_1, \ldots, T_n }.
\end{align*}
Hence Theorem \ref{thm:bifreeequivalenttouniversalpolys} implies $\{(C_k, D_k)\}_{k \in K}$ are bi-free over $B$.
\end{proof}

\subsection{Moment and Cumulant Series}

In this section, we will begin the study of pairs of $B$-faces generated by operators.
\begin{rem}
Let $(\mathcal{A}, E, \varepsilon)$ be a $B$-$B$-non-commutative probability space and let $(C, D)$ be a pair of $B$-faces such that
\[
C = \mathrm{alg}(\{L_b \, \mid \, b \in B\} \cup \{z_i\}_{i \in I}\})
\qquad
\text{and}
\qquad
D = \mathrm{alg}(\{R_b \, \mid \, b \in B\} \cup \{z_j\}_{j \in J}\}).
\]
We desire to compute the joint distribution of $(C, D)$ under $E$.
Clearly it suffices to compute
\[
E((C_{b_1} z_{\alpha(1)} C_{b'_1}) \cdots (C_{b_n} z_{\alpha(n)} C_{b'_n})) = \mathcal{E}_{1_\alpha}(C_{b_1} z_{\alpha(1)} C_{b'_1}, \ldots, C_{b_n} z_{\alpha(n)} C_{b'_n})
\]
where $\alpha : \{1,\ldots, n\} \to I \sqcup J$, $b_1, b'_1, \ldots, b_n, b'_n \in B$, and $C_{b_k} = L_{b_k}$, $C_{b'_k} = L_{b'_k}$ if $\alpha(k) \in I$ and $C_{b_k} = R_{b_k}$, $C_{b'_k} = R_{b'_k}$ if $\alpha(k) \in J$.
However, because $\mathcal{E}$ is bi-multiplicative we can reduce this to computing
\[
\mathcal{E}_{1_{\chi_\alpha}}(z_{\alpha(1)} C_{b_1},  z_{\alpha(2)} C_{b_2}, \ldots,  z_{\alpha(n-1)} C_{b_{n-1}},  z_{\alpha(n)}).
\]
Similarly, to compute all possible cumulants, it suffices to compute
\[
\kappa_{1_{\chi_\alpha}}(z_{\alpha(1)} C_{b_1},  z_{\alpha(2)} C_{b_2}, \ldots,  z_{\alpha(n-1)} C_{b_{n-1}},  z_{\alpha(n)}).
\]
As such we make the following definition.
\end{rem}

\begin{defn}
\label{defnmomentandcumulantseries}
Let $(\mathcal{A}, E, \varepsilon)$ be a $B$-$B$-non-commutative probability space and let $(C, D)$ be a pair of $B$-faces such that
\[
C = \mathrm{alg}(\{L_b \, \mid \, b \in B\} \cup \{z_i\}_{i \in I}\})
\qquad
\text{and}
\qquad
D = \mathrm{alg}(\{R_b \, \mid \, b \in B\} \cup \{z_j\}_{j \in J}\}).
\]
The \emph{moment series} of $z = ((z_i)_{i \in I}, (z_j)_{j \in J})$ is the collection of maps
\[
\{\mu^z_\alpha : B^{n-1} \to B \, \mid \, n \in \mathbb{N}, \alpha :\{1,\ldots, n\} \to I \sqcup J\}
\]
given by
\[
\mu^z_\alpha(b_1, \ldots, b_{n-1}) = \mathcal{E}_{1_{\chi_\alpha}}(z_{\alpha(1)} C_{b_1},  z_{\alpha(2)} C_{b_2}, \ldots,  z_{\alpha(n-1)} C_{b_{n-1}},  z_{\alpha(n)}),
\]
where $C_{b_k} = L_{b_k}$ if $\alpha(k) \in I$ and $C_{b_k} = R_{b_k}$ otherwise.

Similarly, the \emph{cumulant series} of $z$ is the collection of maps
\[
\{\kappa^z_\alpha : B^{n-1} \to B \, \mid \, n \in \mathbb{N}, \alpha :\{1,\ldots, n\} \to I \sqcup J\}
\]
given by
\[
\kappa^z_\alpha(b_1, \ldots, b_{n-1}) = \kappa_{1_{\chi_\alpha}}(z_{\alpha(1)} C_{b_1},  z_{\alpha(2)} C_{b_2}, \ldots,  z_{\alpha(n-1)} C_{b_{n-1}},  z_{\alpha(n)}).
\]

Note that if $n = 1$, we have $\mu_\alpha^z = E(z_{\alpha(1)}) = \kappa_\alpha^z$.
\end{defn}

\begin{prop}
Let $(\mathcal{A}, E)$ be a $B$-$B$-non-commutative probability space, and for $\epsilon\in \{',''\}$ let $\{z_i^\epsilon\}_{i\in I}\subset \A_\ell$ and $\{z_j^\epsilon\}_{j\in J}\subset \A_r$. If 
\[
C^\epsilon = \alg\paren{\set{L_b : b \in B} \cup \set{z_i^\epsilon}_{i\in I}}\qquad\text{and}\qquad
D^\epsilon = \alg\paren{\set{R_b : b \in B} \cup \set{z_j^\epsilon}_{j\in J}}
\]
are such that $(C', D')$ and $(C'', D'')$ are bi-free, then
\[
\kappa_\alpha^{z'+z''} = \kappa_\alpha^{z'}+\kappa_\alpha^{z''}.
\]
\end{prop}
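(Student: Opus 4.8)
The plan is to reduce the additivity statement to the vanishing of mixed cumulants (Theorem \ref{thmequivalenceofbifreeandcombintoriallybifree}) via the multilinearity of the cumulant function. First I would fix $\alpha : \set{1, \ldots, n} \to I \sqcup J$ and $b_1, \ldots, b_{n-1} \in B$ and unpack the definition of the cumulant series (Definition \ref{defnmomentandcumulantseries}) for the pair of $B$-faces generated by $z' + z'' = ((z'_i + z''_i)_{i \in I}, (z'_j + z''_j)_{j \in J})$, obtaining
\[
\kappa_\alpha^{z'+z''}(b_1, \ldots, b_{n-1}) = \kappa_{1_{\chi_\alpha}}\paren{(z'_{\alpha(1)} + z''_{\alpha(1)}) C_{b_1}, \ldots, (z'_{\alpha(n-1)} + z''_{\alpha(n-1)}) C_{b_{n-1}}, z'_{\alpha(n)} + z''_{\alpha(n)}}.
\]
Since $\kappa = \mathcal{E} \ast \mu_{BNC}$ and $\mathcal{E}$ is linear in each entry, the cumulant $\kappa_{1_{\chi_\alpha}}$ is linear in each of its arguments; expanding each factor $(z'_{\alpha(k)} + z''_{\alpha(k)})C_{b_k}$ as $z'_{\alpha(k)} C_{b_k} + z''_{\alpha(k)} C_{b_k}$ then gives
\[
\kappa_\alpha^{z'+z''}(b_1, \ldots, b_{n-1}) = \sum_{\omega : \set{1,\ldots, n} \to \set{',''}} \kappa_{1_{\chi_\alpha}}\paren{z^{\omega(1)}_{\alpha(1)} C_{b_1}, \ldots, z^{\omega(n-1)}_{\alpha(n-1)} C_{b_{n-1}}, z^{\omega(n)}_{\alpha(n)}}.
\]

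Next I would argue that each summand is a (possibly mixed) cumulant of the family $\set{(C^\epsilon, D^\epsilon)}_{\epsilon \in \set{',''}}$, so that Theorem \ref{thmequivalenceofbifreeandcombintoriallybifree} applies with index set $K = \set{',''}$ and shading function $\omega$. The key observation here is the face-membership check: if $\alpha(k) \in I$ then $\chi_\alpha(k) = \ell$, $C_{b_k} = L_{b_k}$, and since $L_{b_k} \in C^{\omega(k)}$ and $z^{\omega(k)}_{\alpha(k)} \in C^{\omega(k)}$ we get $z^{\omega(k)}_{\alpha(k)} C_{b_k} \in C^{\omega(k)} = C_{\omega(k)}$; symmetrically, if $\alpha(k) \in J$ then $\chi_\alpha(k) = r$, $C_{b_k} = R_{b_k} \in D^{\omega(k)}$, and the argument lies in $D^{\omega(k)} = D_{\omega(k)}$. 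Thus the entries of the $\omega$-term lie in exactly the faces prescribed by Theorem \ref{thmequivalenceofbifreeandcombintoriallybifree} with $\epsilon = \omega$, so that cumulant vanishes whenever $\omega$ is non-constant.

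Finally, only the two constant shadings $\omega \equiv {}'$ and $\omega \equiv {}''$ survive, and by Definition \ref{defnmomentandcumulantseries} these are precisely $\kappa_\alpha^{z'}(b_1, \ldots, b_{n-1})$ and $\kappa_\alpha^{z''}(b_1, \ldots, b_{n-1})$, yielding the claimed identity $\kappa_\alpha^{z'+z''} = \kappa_\alpha^{z'} + \kappa_\alpha^{z''}$ (the $n=1$ case reducing to linearity of $E$). The only genuinely delicate point is the face-membership verification in the second step: one must use that each $C^\epsilon$ contains all left $B$-operators $L_b$ and each $D^\epsilon$ contains all right $B$-operators $R_b$, so that inserting the factor $C_{b_k}$ does not move the argument out of the face indexed by $\omega(k)$; everything else is a formal consequence of multilinearity and the previously established vanishing theorem.
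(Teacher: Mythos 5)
Your proof is correct and is exactly the argument the paper intends: the paper's own proof is the one-line remark that the result ``follows directly from Theorem \ref{thmequivalenceofbifreeandcombintoriallybifree},'' and your multilinear expansion over shadings $\omega$, the face-membership check, and the vanishing of the non-constant (mixed) terms is precisely the content of that remark, spelled out in full.
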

\begin{proof}
This follows directly from Theorem \ref{thmequivalenceofbifreeandcombintoriallybifree}.
\end{proof}


\section{Multiplicative Convolution for Families of Pairs of $B$-Faces}
\label{sec:MultiplicativeConvolution}

In this section, we will demonstrate how operator-valued bi-free cumulants involving products of operators may be computed.
The main theorem of this section, Theorem \ref{thmadvcanedcumulantreduction}, also gives rise to an extension of \cite{CNS2014}*{Theorem 5.2.1} in the case $B = \mathbb{C}$.

\subsection{Operator-Valued Bi-Free Cumulants of Products}

\begin{nota}
Given two partitions $\pi, \sigma \in BNC(\chi)$, we let $\pi \vee \sigma$ denote the smallest element of $BNC(\chi)$ greater than $\pi$ and $\sigma$.
\end{nota}
\begin{nota}
\label{nota:expandingnodesonthesameside}
Let $m,n \in \mathbb{N}$ with $m < n$ be fixed, and fix a sequence of integers 
\[
k(0) = 0 < k(1) < \cdots < k(m) = n
\]
For $\chi : \{1,\ldots, m\} \to \{\ell, r\}$, we define $\widehat{\chi} : \{1,\ldots, n\} \to \{\ell, r\}$ via
\[
\widehat{\chi}(q) = \chi(p_q)
\]
where $p_q$ is the unique element of $\{1,\ldots, m\}$ such that $k(p_q-1) < q \leq k(p_q)$.  
\end{nota}
\begin{rem}
In the context of Notation \ref{nota:expandingnodesonthesameside}, there exists an embedding of $BNC(\chi)$ into $BNC(\widehat{\chi})$ via $\pi \mapsto \widehat{\pi}$ where the $p^{\mathrm{th}}$ node of $\pi$ is replaced by the block $(k(p-1)+1, \ldots, k(p))$.
Observe that all of these nodes appear on the side $p$ was on originally.
Alternatively, this map can be viewed as an analogue of the map on non-crossing partitions from \cite{NSBook}*{Notation 11.9} after applying $s^{-1}_\chi$.

It is easy to see that $\widehat{1_\chi} = 1_{\widehat{\chi}}$, $\widehat{0_\chi}$ is the partition with blocks $\{(k(p-1)+1, \ldots, k(p))  \}_{p=1}^m$, and $\pi \mapsto \widehat{\pi}$ is injective and preserves the partial ordering on $BNC$.  Furthermore the image of $BNC(\chi)$ under this map is
\[
\widehat{BNC}(\chi) = \left[\widehat{0_\chi}, \widehat{1_\chi}\right] = \left[\widehat{0_\chi}, 1_{\widehat{\chi}}\right] \subseteq BNC(\widehat{\chi}).
\]
Finally, since the lattice structure is preserved by this map, we see that $\mu_{BNC}(\sigma, \pi) = \mu_{BNC}(\widehat{\sigma}, \widehat{\pi})$.
\end{rem}

\begin{rem}
\label{rem:partial-mobius-inversion}
Recall that since $\mu_{BNC}$ is the M\"{o}bius function on the lattice of bi-non-crossing partitions, we have for each $\sigma,\pi \in BNC(\chi)$ with $\sigma \leq \pi$ that
\[
\sum_{\substack{ \tau \in BNC(\chi) \\ \sigma \leq \tau \leq \pi  }} \mu_{BNC}(\tau, \pi) =  \left\{
\begin{array}{ll}
1 & \mbox{if } \sigma = \pi  \\
0 & \mbox{otherwise }
\end{array} \right. .
\]
Therefore, it is easy to see that the partial M\"{o}bius inversion from \cite{NSBook}*{Proposition 10.11} holds in our setting; that is, if $f, g : BNC(\chi) \to B$ are such that 
\[
f(\pi) = \sum_{\substack{\sigma \in BNC(\chi) \\ \sigma \leq \pi}} g(\sigma)
\]
for all $\pi \in BNC(\chi)$, then for all $\pi, \sigma \in BNC(\chi)$ with $\sigma \leq \pi$, we have the relation
\[
\sum_{\substack{\tau \in BNC(\chi) \\ \sigma \leq \tau \leq \pi }} f(\tau) \mu_{BNC}(\tau, \pi) = \sum_{\substack{\omega \in BNC(\chi) \\ \omega \vee \sigma = \pi }} g(\omega).
\]
\end{rem}

We now describe the operator-valued bi-free cumulants involving products of operators in terms of the above notation, following the spirit of \cite{NSBook}*{Theorem 11.12}.
\begin{thm}
\label{thmadvcanedcumulantreduction}
Let $(\mathcal{A}, E, \varepsilon)$ be a $B$-$B$-non-commutative probability space, $m,n \in \mathbb{N}$ with $m < n$, $\chi : \{1,\ldots, m\} \to \{\ell, r\}$, and
\[
k(0) = 0 < k(1) < \cdots < k(m) = n.
\]
If $\pi \in BNC(\chi)$ and $T_k \in \mathcal{A}_{\widehat{\chi}(k)}$ for all $k \in \{1,\ldots, n\}$, then
\[
\kappa_\pi\left(T_1 \cdots T_{k(1)}, T_{k(1)+1} \cdots T_{k(2)}, \ldots, T_{k(m-1)+1} \cdots T_{k(m)} \right) =  \sum_{\substack{\sigma \in BNC(\widehat{\chi})\\ \sigma \vee \widehat{0_\chi} = \widehat{\pi}}} \kappa_\sigma(T_1, \ldots, T_n).
\]
In particular, for $\pi = 1_\chi$, we have
\[
\kappa_{1_\chi}\left(T_1 \cdots T_{k(1)}, T_{k(1)+1} \cdots T_{k(2)}, \ldots, T_{k(m-1)+1} \cdots T_{k(m)}\right) = \sum_{\substack{\sigma \in BNC(\widehat{\chi})\\ \sigma \vee \widehat{0_\chi} = 1_{\widehat{\chi}}}} \kappa_\sigma(T_1, \ldots, T_n).
\]
\end{thm}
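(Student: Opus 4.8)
The plan is to reduce the identity to the partial M\"obius inversion recorded in Remark \ref{rem:partial-mobius-inversion}, transported through the order-embedding $\rho \mapsto \widehat{\rho}$ from the remark following Notation \ref{nota:expandingnodesonthesameside}. Throughout write $S_p = T_{k(p-1)+1} \cdots T_{k(p)}$ for the products appearing on the left-hand side. The first step is the \emph{moment} analogue of the claim: for every $\rho \in BNC(\chi)$,
\[
\mathcal{E}_\rho(S_1, \ldots, S_m) = \mathcal{E}_{\widehat{\rho}}(T_1, \ldots, T_n).
\]
Since $\mathcal{E}$ is a bi-moment function, and since each set $\{k(p-1)+1, \ldots, k(p)\}$ is a $\widehat{\chi}$-interval of consecutive indices contained in a single block of $\widehat{\rho}$ (because $\widehat{\rho} \geq \widehat{0_\chi}$ as $\widehat{\rho} \in \widehat{BNC}(\chi) = [\widehat{0_\chi}, 1_{\widehat{\chi}}]$), this follows by repeatedly invoking the merging characterization of bi-moment functions in Lemma \ref{lemequivalentnotionsofbimomentandbicumulant}: each application collapses one adjacent pair $q \sim_{\widehat{\rho}} q+1$ with $\widehat{\chi}(q) = \widehat{\chi}(q+1)$ into its product $T_q T_{q+1}$, and iterating within each $\widehat{0_\chi}$-block coarsens $\widehat{\rho}$ to $\rho$ while assembling the $S_p$.

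Next, working entirely inside $BNC(\widehat{\chi})$, I would apply Remark \ref{rem:partial-mobius-inversion} to the functions $F(\tau) = \mathcal{E}_\tau(T_1, \ldots, T_n)$ and $G(\sigma) = \kappa_\sigma(T_1, \ldots, T_n)$, which satisfy $F(\tau) = \sum_{\sigma \leq \tau} G(\sigma)$ by the moment-cumulant relation following Definition \ref{def:kappa}. Choosing the lower endpoint $\widehat{0_\chi}$ and the upper endpoint $\widehat{\pi}$ yields
\[
\sum_{\substack{\tau \in BNC(\widehat{\chi}) \\ \widehat{0_\chi} \leq \tau \leq \widehat{\pi}}} \mathcal{E}_\tau(T_1, \ldots, T_n)\, \mu_{BNC}(\tau, \widehat{\pi}) = \sum_{\substack{\sigma \in BNC(\widehat{\chi}) \\ \sigma \vee \widehat{0_\chi} = \widehat{\pi}}} \kappa_\sigma(T_1, \ldots, T_n),
\]
whose right-hand side is exactly the sum appearing in the theorem.

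It then remains to identify the left-hand side with $\kappa_\pi(S_1, \ldots, S_m)$. The interval $[\widehat{0_\chi}, \widehat{\pi}]$ is precisely the image of $[0_\chi, \pi]$ under the order-isomorphism $\rho \mapsto \widehat{\rho}$ onto $[\widehat{0_\chi}, 1_{\widehat{\chi}}]$, so I can re-index the sum over $\tau = \widehat{\rho}$ with $\rho \leq \pi$. Using $\mu_{BNC}(\widehat{\rho}, \widehat{\pi}) = \mu_{BNC}(\rho, \pi)$ from the same remark together with the moment-merging identity of the first step, the left-hand side becomes
\[
\sum_{\substack{\rho \in BNC(\chi) \\ \rho \leq \pi}} \mathcal{E}_\rho(S_1, \ldots, S_m)\, \mu_{BNC}(\rho, \pi) = \kappa_\pi(S_1, \ldots, S_m),
\]
by $\kappa = \mathcal{E} \ast \mu_{BNC}$. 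This gives the general formula, and the displayed special case follows immediately from $\widehat{1_\chi} = 1_{\widehat{\chi}}$.

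The main obstacle is the first step: one must check that at each stage of the collapse the indices being merged genuinely meet the hypotheses of the bi-moment characterization---adjacent, equal under $\widehat{\chi}$, and joined in $\widehat{\rho}$---and that the products accumulate in the correct index order $T_{k(p-1)+1} \cdots T_{k(p)}$ uniformly for left and right nodes. Once this bookkeeping is in place, the remainder is a transfer of the standard M\"obius argument across the lattice isomorphism $\rho \mapsto \widehat{\rho}$.
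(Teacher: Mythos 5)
Your proposal is correct and follows essentially the same route as the paper: both proofs expand $\kappa_\pi(S_1,\ldots,S_m)$ via the moment--cumulant relation, transport the sum through the order-embedding $\rho \mapsto \widehat{\rho}$ using $\mathcal{E}_\rho(S_1,\ldots,S_m) = \mathcal{E}_{\widehat{\rho}}(T_1,\ldots,T_n)$ and $\mu_{BNC}(\widehat{\rho},\widehat{\pi}) = \mu_{BNC}(\rho,\pi)$, and conclude with the partial M\"obius inversion of Remark \ref{rem:partial-mobius-inversion}. The only cosmetic difference is that you justify the moment-collapse identity via the bi-moment merging characterization (Lemma \ref{lemequivalentnotionsofbimomentandbicumulant}), whereas the paper cites Definition \ref{defn:recursivedefinitionofEpi} directly; your bookkeeping there is, if anything, more explicit.
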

\begin{proof}
For $j \in \{1,\ldots, m\}$, let $S_j = T_{k(j-1)+1} \cdots T_{k(j)}$.  Then, by Definition \ref{defn:recursivedefinitionofEpi},
\begin{align*}
\kappa_\pi(S_1, \ldots, S_m) &= \sum_{\substack{\tau \in BNC(\chi) \\ \tau \leq \pi}} \mathcal{E}_\tau(S_1, \ldots, S_m) \mu_{BNC}(\tau, \pi) \\
& = \sum_{\substack{\tau \in BNC(\chi)\\  \tau \leq \pi}} \mathcal{E}_{\widehat{\tau}}(T_1, \ldots, T_n) \mu_{BNC}(\widehat{\tau}, \widehat{\pi}) \\
& = \sum_{\substack{\sigma \in BNC(\widehat{\chi}) \\ \widehat{0_\chi} \leq \sigma \leq \widehat{\pi}}} \mathcal{E}_{\sigma}(T_1, \ldots, T_n) \mu_{BNC}(\sigma, \widehat{\pi}) \\
& = \sum_{\substack{\sigma \in BNC(\widehat{\chi})\\ \sigma \vee \widehat{0_\chi} = \widehat{\pi}}} \kappa_\sigma(T_1, \ldots, T_n),
\end{align*}
with the last line following from Remark \ref{rem:partial-mobius-inversion}.
\end{proof}

\subsection{Multiplicative Convolution of Bi-Free Two-Faced Families}

Recall from \cite{CNS2014}*{Definition 5.1.1} that given any $\chi : \{1, \ldots, n\} \to \{\ell, r\}$ the (below on the left, above on the right) Kreweras complement of a bi-non-crossing partition $\pi \in BNC(\chi)$ is the element $K_{BNC}(\pi)$ of $BNC(\chi)$ obtained by applying $s_\chi$ to the (right) Kreweras complement in $NC(n)$ of $s^{-1}_\chi \cdot \pi$; that is,
\[
K_{BNC}(\pi) := s_\chi \cdot K_{NC}(s^{-1}_\chi \cdot \pi).
\]
Using this Kreweras complement, \cite{CNS2014}*{Theorem 5.2.1} constructed the multiplicative convolution of bi-free two-faced families where each pair of faces had a single left operator and a single right operator.  Using Theorem \ref{thmadvcanedcumulantreduction}, we can extend \cite{CNS2014}*{Theorem 5.2.1} as follows, using the proof of \cite{NSBook}*{Theorem 14.4}.

\begin{prop}
\label{prop:multiplcative-convolution}
Let $(\mathcal{A}, \varphi)$ be a non-commutative probability space.
Let $z'=( (z'_i)_{i \in I}, (z'_j)_{j \in J})$ and $z''=( (z''_i)_{i \in I}, (z''_j)_{j \in J})$ be bi-free two-faced families in $\mathcal{A}$, and set $z_i = z'_iz''_i$, $z_j = z''_jz'_j$ for $i \in I$ and $j \in J$.
Then for $\alpha : \{1,\ldots, n\} \to I \sqcup J$, we have
\[
\kappa_\chi(z_{\alpha(1)}, \ldots, z_{\alpha(n)}) = \sum_{\pi \in BNC(\chi_\alpha)} \kappa_\pi(z'_{\alpha(1)}, \ldots, z'_{\alpha(n)}) \cdot \kappa_{K_{BNC}(\pi)}(z''_{\alpha(1)}, \ldots, z''_{\alpha(n)}).
\]
\end{prop}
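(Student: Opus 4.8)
The plan is to reduce the statement to the free multiplicative-convolution formula of \cite{NSBook}*{Theorem 14.4} by doubling each node and transporting the combinatorics through $s_{\chi}$.

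First I would set $m=n$ and $k(p)=2p$, so that each $z_{\alpha(p)}$ is a product of exactly two factors $T_{2p-1}T_{2p}$. I assign the factors according to the stated convention: for a left node ($\alpha(p)\in I$) I set $T_{2p-1}=z'_{\alpha(p)}$ and $T_{2p}=z''_{\alpha(p)}$, while for a right node ($\alpha(p)\in J$) I set $T_{2p-1}=z''_{\alpha(p)}$ and $T_{2p}=z'_{\alpha(p)}$, which is precisely $z_i=z'_iz''_i$ and $z_j=z''_jz'_j$. Applying Theorem \ref{thmadvcanedcumulantreduction} with $\pi=1_{\chi_\alpha}$ then gives
\[
\kappa_{1_{\chi_\alpha}}(z_{\alpha(1)},\ldots,z_{\alpha(n)})=\sum_{\substack{\sigma\in BNC(\widehat{\chi_\alpha})\\ \sigma\vee\widehat{0_{\chi_\alpha}}=1_{\widehat{\chi_\alpha}}}}\kappa_\sigma(T_1,\ldots,T_{2n}).
\]

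Next I would exploit bi-freeness. I colour position $k$ by whether $T_k$ comes from the family $z'$ or from $z''$; since $(C',D')$ and $(C'',D'')$ are bi-free, Theorem \ref{thmequivalenceofbifreeandcombintoriallybifree} shows that any full cumulant with non-constant colouring vanishes, and by the bi-multiplicativity of $\kappa$ (Corollary \ref{cor:cumulantsarebimultiplicative}) a partition $\sigma$ possessing a two-coloured block contributes $0$. Hence only $\sigma=\sigma'\sqcup\sigma''$ survive, where $\sigma'$ partitions the $z'$-positions and $\sigma''$ the $z''$-positions. Because $B=\mathbb{C}$, the scalar cumulant $\kappa_\sigma$ factors as a product over its blocks, so $\kappa_\sigma(T_1,\ldots,T_{2n})=\kappa_{\sigma'}(z')\,\kappa_{\sigma''}(z'')$; under the natural identification of each set of monochromatic positions with $\{1,\ldots,n\}$ (which preserves the left/right side of each original node) one has $\kappa_{\sigma'}(z')=\kappa_\pi(z'_{\alpha(1)},\ldots,z'_{\alpha(n)})$ for the corresponding $\pi\in BNC(\chi_\alpha)$, and similarly for $\sigma''$.

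The combinatorial heart of the argument is to show that the surviving pairs $(\sigma',\sigma'')$ are exactly $(\pi,K_{BNC}(\pi))$. Here I would apply $s_{\widehat{\chi_\alpha}}^{-1}$ and verify that the chosen product convention makes the doubled picture standard: reading the $2n$ nodes in $\prec_{\widehat{\chi_\alpha}}$-order places the two copies of each original node in adjacent slots with the $z'$-copy first and the $z''$-copy second (this is where the asymmetry $z_j=z''_jz'_j$ on the right is essential), the resulting pairs occur in the $s_{\chi_\alpha}^{-1}$-order of the original nodes, and $\widehat{0_{\chi_\alpha}}$ becomes the pairing $\{\{1,2\},\{3,4\},\ldots\}$. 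Thus the coloured positions interlace exactly as the $a_i$ and $b_i$ in \cite{NSBook}*{Theorem 14.4}, the condition $\sigma\vee\widehat{0_{\chi_\alpha}}=1_{\widehat{\chi_\alpha}}$ becomes its $NC(2n)$ counterpart, and since $K_{BNC}$ is by definition $s_\chi\cdot K_{NC}(s_\chi^{-1}\cdot(-))$, the free Kreweras-complement computation of that proof forces $\sigma''=K_{BNC}(\pi)$ for each $\pi$, with $\sigma'\sqcup K_{BNC}(\pi)$ automatically bi-non-crossing and satisfying the vee-condition. Substituting yields the claimed sum over $\pi\in BNC(\chi_\alpha)$. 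I expect this last verification—that the ordering conventions make $s_{\widehat{\chi_\alpha}}^{-1}$ carry the coloured doubling onto the standard free doubling—to be the main obstacle, since it is exactly what dictates the definitions $z_i=z'_iz''_i$ and $z_j=z''_jz'_j$ and reduces the bi-free Kreweras bookkeeping to the known free case.
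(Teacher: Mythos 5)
Your proposal is correct and follows essentially the same route as the paper's proof: double each node via Theorem \ref{thmadvcanedcumulantreduction} with the pairing $\widehat{0_{\chi_\alpha}}$, kill the mixed-colour terms by bi-freeness (Theorem \ref{thmequivalenceofbifreeandcombintoriallybifree}), factor the scalar cumulants over blocks, and identify the surviving pairs as $(\pi, K_{BNC}(\pi))$ by transporting the non-crossing Kreweras-complement argument through $s_\chi$. Your explicit verification that the $\prec_{\widehat{\chi_\alpha}}$-order interlaces the $z'$- and $z''$-copies (which is exactly what the conventions $z_i = z'_iz''_i$ and $z_j = z''_jz'_j$ are designed to achieve) is simply a fleshed-out version of the step the paper dismisses as ``elementary to see \ldots by the relations between the Kreweras complements for bi-non-crossing and non-crossing partitions.''
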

\begin{proof}
Define $\widehat{\alpha} : \{1,\ldots, 2n\} \to I \sqcup J$ by $\widehat{\alpha}(2k-1) = \widehat{\alpha}(2k) = \alpha(k)$ for $k \in \{1,\ldots, n\}$, and define $\epsilon : \{1,\ldots, 2n\} \to \{', ''\}$ by
\[
\epsilon(2k-1)  = \left\{
\begin{array}{ll}
' & \mbox{if } \alpha(k) \in I  \\
'' & \mbox{if } \alpha(k) \in J
\end{array} \right.
 \qquad \mbox{ and }\qquad \epsilon(2k) = 
  \left\{
\begin{array}{ll}
'' & \mbox{if } \alpha(k) \in I  \\
' & \mbox{if } \alpha(k) \in J
\end{array} \right..
\]
Using Theorem \ref{thmadvcanedcumulantreduction}, we easily obtain
\[
\kappa_\chi(z_{\alpha(1)}, \ldots, z_{\alpha(n)}) = \sum_{\substack{\pi \in BNC(\chi_{\widehat{\alpha}}) \\ \pi \vee \sigma = 1_{\chi_{\widehat{\alpha}}}}} \kappa_\pi\left(z^{\epsilon(1)}_{\alpha(1)}, z^{\epsilon(2)}_{\alpha(1)}, \ldots, z^{\epsilon(2n-1)}_{\alpha(n)}, z^{\epsilon(2n)}_{\alpha(n)}\right)
\]
where $\sigma = \{(1,2), (3,4), \ldots, (2n-1, 2_n)\}$.  Since $z'$ and $z''$ are bi-free, Theorem \ref{thmequivalenceofbifreeandcombintoriallybifree} (or simply \cite{CNS2014}*{Theorem 4.3.1}) implies mixed bi-free cumulants vanish and thus only $\pi$ of the form $\pi = \pi' \cup \pi''$ with $\pi' \in BNC\left( \chi_{\widehat{\alpha}}|_{\{k \, \mid \, \epsilon(k) = '\}}\right)$ and $\pi'' \in BNC\left( \chi_{\widehat{\alpha}}|_{\{k \, \mid \, \epsilon(k) = ''\}}\right)$ will provide a non-zero contribution.  However, for an arbitrary $\pi' \in BNC\left( \chi_{\widehat{\alpha}}|_{\{k \, \mid \, \epsilon(k) = '\}}\right)$, it is elementary to see (for example, by the relations between the Kreweras complements for bi-non-crossing and non-crossing partitions) that there exists a unique $\pi'' \in BNC\left( \chi_{\widehat{\alpha}}|_{\{k \, \mid \, \epsilon(k) = ''\}}\right)$ such that $(\pi' \cup \pi'') \vee \sigma = 1_{\chi_{\widehat{\alpha}}}$; namely, $\pi'' = K_{BNC}(\pi')$.  Therefore, since we are in the scalar setting and $\kappa_\tau(T_1,\ldots, T_n) = \kappa_{\tau|_{V}}((T_1,\ldots, T_n)|_V) \kappa_{\tau|_{V^c}}((T_1,\ldots, T_n)|_{V^c})$ whenever $\tau \in BNC(\chi')$ and $V$ is a block of $\pi$, we obtain the desired equation.
\end{proof}

\section{Additional Examples of Bi-Free Families with Amalgamation}
\label{sec:howtogetexamplesofbifreefamilies}

In this section, we will demonstrate some useful techniques for constructing bi-free pairs of $B$-faces.

\subsection{Conjugation by Haar Bi-Unitaries}

\begin{defn}
\label{defnconcretebihaar}
Consider the $B$-$B$-bimodule $\ell^2(\mathbb{Z}, B) = (B \delta_0) \oplus (B\delta_0)^\bot$.  A \emph{concrete $B$-valued Haar bi-unitary} is the invertible element
\[
U_{\mathrm{Haar}} \in \mathcal{L}(\ell^2(\mathbb{Z}, B))
\]
defined by
\[
U_{\mathrm{Haar}}((b_k)_{k \in K}) = (b_{k+1})_{k \in K}
\]
for all $(b_k)_{k \in K} \in \ell^2(\mathbb{Z}, B)$.  The moment series of
\[
z_{\mathrm{Haar}} = (\{U_{\mathrm{Haar}}, U_{\mathrm{Haar}}^{-1}\}, \{U_{\mathrm{Haar}}, U_{\mathrm{Haar}}^{-1}\})
\]
will be called the \emph{$B$-valued Haar bi-unitary moment series}.
\end{defn}
\begin{defn}
Let $(\mathcal{A}, E, \varepsilon)$ be a $B$-$B$-non-commutative probability space.  An \emph{abstract $B$-valued Haar bi-unitary} in $\mathcal{A}$ is a pair of invertible elements $U_\ell \in \mathcal{A}_\ell$ and $U_r \in \mathcal{A}_r$ such that the moment series of $(\{(U_\ell, U^{-1}_\ell\}, \{U_r, U^{-1}_r\})$ is equal to the $B$-valued Haar bi-unitary moment series.  We will call 
\[
(\mathrm{alg}(\varepsilon(B \otimes 1_B), U_\ell, U_\ell^{-1}), \mathrm{alg}(\varepsilon(1_B \otimes B^{\text{op}}), U_r, U_r^{-1}))
\]
the \emph{pair of $B$-faces generated by $(U_\ell, U_r)$}.
\end{defn}
\begin{thm}
\label{thmconjugatingbyHaarunitary}
Let $(\mathcal{A}, E, \varepsilon)$ be a $B$-$B$-non-commutative probability space, let $(C, D)$ be a pair of $B$-faces in $(\mathcal{A},E)$, and let $(U_\ell, U_r)$ be a $B$-valued Haar bi-unitary in $(\mathcal{A},E, \varepsilon)$ such that $(C, D)$ is bi-free with amalgamation over $B$ from the pair of $B$-faces generated by $(U_\ell, U_r)$.
Then the pairs of $B$-faces $(C, D)$ and $(U^{-1}_\ell C U_\ell, U^{-1}_r DU_r)$ are bi-free over $B$ and the joint distribution of $(U^{-1}_\ell CU_\ell, U^{-1}_r DU_r)$ is equal to the joint distribution of $(C, D)$.
\end{thm}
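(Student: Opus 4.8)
The plan is to derive both assertions from a single computation of the operator-valued bi-free cumulants of conjugated variables, using the equivalence between bi-freeness and the vanishing of mixed cumulants (Theorem \ref{thmequivalenceofbifreeandcombintoriallybifree}) together with the product-reduction formula (Theorem \ref{thmadvcanedcumulantreduction}). Writing $\mathcal{U}$ for the pair of $B$-faces generated by $(U_\ell, U_r)$, I would first record that conjugation by $U_\ell$ (resp.\ $U_r$) is an algebra homomorphism carrying $\mathcal{A}_\ell$ into $\mathcal{A}_\ell$ (resp.\ $\mathcal{A}_r$ into $\mathcal{A}_r$) and fixing $\varepsilon(B \otimes 1_B)$ (resp.\ $\varepsilon(1_B \otimes B^{\mathrm{op}})$) in distribution, so that $(U_\ell^{-1} C U_\ell, U_r^{-1} D U_r)$ is again a pair of $B$-faces. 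Because $\kappa$ is bi-multiplicative (Corollary \ref{cor:cumulantsarebimultiplicative}) and cumulants with an entry of the form $L_b$ or $R_b$ vanish (Proposition \ref{prop:inputaL_borR_bandyougetzerocumulants}), it suffices to evaluate $\kappa_{1_\chi}(\xi_1, \ldots, \xi_n)$ where each $\xi_k$ is either an element of $C \cup D$ (``untwisted'') or a conjugate $U_{\chi(k)}^{-1} w_k U_{\chi(k)}$ (``twisted'') with $w_k \in C \cup D$.

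Next I would expand every twisted entry $U_{\chi(k)}^{-1} w_k U_{\chi(k)}$ as a threefold product and apply Theorem \ref{thmadvcanedcumulantreduction}, writing $\kappa_{1_\chi}(\xi_1, \ldots, \xi_n) = \sum_{\sigma \vee \widehat{0} = 1_{\widehat{\chi}}} \kappa_\sigma(\,\cdots\,)$, where the arguments on the right are the fully spelled-out list of single operators $U_\ell^{\pm 1}, U_r^{\pm 1}$ and elements of $C \cup D$, and $\widehat{0}$ is the partition grouping the three factors of each twist. Since $(C, D)$ and $\mathcal{U}$ are bi-free over $B$, Theorem \ref{thmequivalenceofbifreeandcombintoriallybifree} together with the bi-multiplicative factorization of $\kappa_\sigma$ forces every surviving $\sigma$ to have each block lying entirely among the $U$-operators or entirely in $C \cup D$; any block mixing the two colours contributes $0$.

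The heart of the argument is then the combinatorial structure of the pure $U$-blocks. Here I would establish (from the concrete shift model of Definition \ref{defnconcretebihaar}, via its moment series) that the $B$-valued Haar bi-unitary is ``$R$-diagonal'': $\kappa_{1_{\chi'}}(U^{\epsilon_1}, \ldots, U^{\epsilon_m}) = 0$ unless $m$ is even and the exponents $\epsilon_j \in \{1, -1\}$ strictly alternate, with the surviving values behaving like scalars (that is, like $L_{1_B}$ or $R_{1_B}$). Reading the $U^{-1}$ opening and the $U$ closing each twist in the order $\prec_\chi$, the alternation property forces each pure $U$-block to connect a closing $U$ to the opening $U^{-1}$ of a neighbouring twist, so that these blocks act precisely as the non-crossing ``caps'' that erase the conjugating unitaries. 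Combined with the constraint $\sigma \vee \widehat{0} = 1_{\widehat{\chi}}$, this puts the surviving configurations in bijection with the bi-non-crossing partitions of the $w_k$'s alone and collapses the $U$-weights to the identity. This yields $\kappa_{1_\chi}(\xi_1, \ldots, \xi_n) = \kappa_{1_\chi}(w_1, \ldots, w_n)$ when every entry is twisted (giving equality of the joint distributions) and $\kappa_{1_\chi}(\xi_1, \ldots, \xi_n) = 0$ as soon as both twisted and untwisted entries occur (giving, through Theorem \ref{thmequivalenceofbifreeandcombintoriallybifree}, the bi-freeness of $(C,D)$ and $(U_\ell^{-1} C U_\ell, U_r^{-1} D U_r)$).

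The main obstacle I anticipate is exactly this last combinatorial step: proving the alternating, $R$-diagonal cumulant structure of the $B$-valued Haar bi-unitary and verifying that the higher alternating $U$-cumulants (which are genuinely nonzero) conspire with the $\sigma \vee \widehat{0} = 1_{\widehat{\chi}}$ condition to produce the clean nearest-neighbour cancellation. Careful bookkeeping of the two sides ($U_\ell$ versus $U_r$) and of the ordering $\prec_\chi$ in the bi-non-crossing diagrams, together with checking that the $B$-valued weights coming from the $U$-cumulants act as $L_{1_B}$ or $R_{1_B}$ and hence drop out, will be the technically delicate part.
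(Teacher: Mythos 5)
Your route is genuinely different from the paper's: the paper never touches cumulants, instead realizing $(C,D)$ and the Haar pair on a free-product $B$-$B$-bimodule, exhibiting a submodule $\mathcal{Y}$ invariant under both $(C,D)$ and the conjugated pair, and writing down a $B$-bimodule isomorphism from a two-fold free product of copies of $(\X,\mathring{\X},\xi)$ onto $\mathcal{Y}$ that intertwines a second bi-free copy of $(C,D)$ with the conjugated pair. Your skeleton (expand the conjugates via Theorem \ref{thmadvcanedcumulantreduction}, discard blocks mixing $U$'s with $C\cup D$ via Theorem \ref{thmequivalenceofbifreeandcombintoriallybifree}, analyze the pure $U$-blocks) is sound as far as it goes, but the step you defer --- the ``cap'' lemma --- is not delicate bookkeeping; it is the heart of the matter, and as you have formulated it (``each pure $U$-block connects a closing $U$ to the opening $U^{-1}$ of a neighbouring twist'') it fails in the bi-free setting.

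The reason is that the Haar bi-unitary is a two-faced family, and its mixed left-right cumulants do not follow the single-Haar-unitary pattern that free-probability intuition supplies. From the shift model one computes $\kappa_{1_\chi}(U_\ell, U_r^{-1}) = \kappa_{1_\chi}(U_\ell^{-1}, U_r) = 1_B$ but $\kappa_{1_\chi}(U_\ell, U_r) = \kappa_{1_\chi}(U_\ell^{-1}, U_r^{-1}) = 0$, since $E(U_\ell U_r) = 0$ while $E(U_\ell U_r^{-1}) = 1_B$. Now run your argument on the smallest mixed case, $\kappa_{1_\chi}\bigl(U_\ell^{-1}cU_\ell,\, U_r^{-1}dU_r\bigr)$ with $\chi = (\ell, r)$: in the order $\prec_{\widehat{\chi}}$, which reverses the right side, the six expanded entries read $U_\ell^{-1}, c, U_\ell, U_r, d, U_r^{-1}$, so the only pure-$U$ pair partitions that are bi-non-crossing together with the block $\{c,d\}$ are $\{U_\ell, U_r\}$ and $\{U_\ell^{-1}, U_r^{-1}\}$ --- precisely the caps whose cumulants vanish. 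Checking the remaining configurations (the four-block cumulant $\kappa(U_\ell^{-1},U_\ell,U_r^{-1},U_r)$ also vanishes, as $E(U_\ell^{-1}U_\ell U_r^{-1}U_r)=1_B$ is already accounted for by $\kappa(U_\ell^{-1},U_\ell)\kappa(U_r^{-1},U_r)$) gives $\kappa_{1_\chi}(U_\ell^{-1}cU_\ell, U_r^{-1}dU_r) = 0$, hence $E(U_\ell^{-1}cU_\ell\,U_r^{-1}dU_r) = E(c)E(d)$ rather than $E(cd)$: the collapse onto $\kappa_{1_\chi}(c,d)$ that you assert simply does not occur. The clean cancellation you anticipate (with $\kappa(U_\ell^{-1},U_\ell,U_r,U_r^{-1}) = -1_B$ supplying the inclusion-exclusion) happens only when the right face is conjugated in the opposite order, $U_r D U_r^{-1}$, so that the caps arising are the nonvanishing ones $\{U_\ell, U_r^{-1}\}$ and $\{U_\ell^{-1}, U_r\}$. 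So the proposal cannot be completed as written: you would first have to compute the full joint $B$-valued cumulant series of the pair $(U_\ell, U_r)$ and redo the cap combinatorics in the order $\prec_\chi$, and doing so shows the conclusion is sensitive to the left-right matching of the Haar pair --- a point on which your sketch is silent, and which the paper's own ``elementary to verify'' intertwining step also elides (its submodule $\mathcal{Y}$ is built from the pattern $\delta_{-1}\otimes x'\otimes\delta_1$, which matches $U_r^{-1}dU_r$ applied to the vacuum but not $U_\ell^{-1}cU_\ell$, which produces $\delta_1\otimes x'\otimes\delta_{-1}$).
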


\begin{proof}
Let $(\X, \mathring{\X}, \xi)$ be a $B$-bimodule that realizes $(C, D)$ with expectation $E$ (see Theorem \ref{thm:representingbbncps}).
Since $(C, D)$ and the pair of $B$-faces generated by $(U_\ell, U_r)$ are bi-free over $B$, their joint distribution may be computed via the vacuum state with respect to their canonical actions on 
\[
(\X, \mathring{\X}, \xi) \ast_B (\ell^2(\mathbb{Z}, B), (B\delta_0)^\bot, \delta_0),
\]
where both $U_\ell$ and $U_r$ act on $\ell^2(\mathbb{Z}, B)$ as $U_{\mathrm{Haar}}$.
\par 
Let $\mathcal{Y}$ denote the $B$-submodule of $(\X, \mathring{\X}, \xi) \ast_B (\ell^2(\mathbb{Z}, B), (B\delta_0)^\bot, \delta_0)$ generated by all vectors of the form
\[
x_0 \otimes (\delta_{-1} \otimes x'_1 \otimes \delta_{1}) \otimes x_1 \otimes (\delta_{-1} \otimes x'_2 \otimes \delta_{1}) \otimes x_2 \otimes \cdots \otimes (\delta_{-1} \otimes x'_n \otimes \delta_{1}) \otimes x_n
\]
where $n \geq 0$, $x_1, \ldots, x_{n-1}, x'_1, \ldots, x'_n \in \mathring{\X}$, and $x_0, x_n \in \X$.  It is straightforward to verify that $\mathcal{Y}$ is invariant under the actions of both $(C, D)$ and $(U^{-1}_\ell CU_\ell, U^{-1}_r DU_r)$.
As the vacuum vector is itself an element of $\mathcal Y$, the joint distribution of $(C, D)$ and $(U^{-1}_\ell CU_\ell, U^{-1}_r DU_r)$ may be computed purely by examining their actions on $\mathcal Y$.

To show that $(C, D)$ and $(U^{-1}_\ell CU_\ell, U^{-1}_r DU_r)$ are bi-free with amalgamation over $B$ and have the same joint distribution, we show that the joint distribution of $(C, D)$ and $(U^{-1}_\ell CU_\ell, U^{-1}_r DU_r)$ is the same as that of two bi-free copies of $(C, D)$ acting on a reduced free product space.

Let $(C_1, D_1)$ and $(C_2, D_2)$ be copies of $(C, D)$ acting on copies of $(\X, \mathring{\X}, \xi)$ given by $(\X_1, \mathring{\X}_1, \xi_1)$ and $(\X_2, \mathring{\X}_2, \xi_2)$, respectively.  Thus $(C_1, D_1) \ast \ast_B (C_2, D_2)$ has an induced action on $(\X_1, \mathring{\X}_1, \xi_1) \ast_B (\X_2, \mathring{\X}_2, \xi_2)$.

Consider the map 
\[
\Phi : (\X_1,\mathring{\X}_1, \xi_1) \ast_B (\X_2, \mathring{\X}_2, \xi_2) \to \mathcal{Y}
\]
defined as follows: for $n \geq 0$, $x_1, \ldots, x_{n-1} \in \mathring{\X}_1$, $x'_1, \ldots, x'_n \in \mathring{\X}_2$, and $x_0, x_n \in \X_1$, we define
\[
\Phi(x_0 \otimes x'_1 \otimes x_1 \otimes \cdots \otimes x'_n \otimes x_n) = x_0 \otimes (\delta_{-1} \otimes x'_1 \otimes \delta_{1}) \otimes x_1 \otimes \cdots \otimes (\delta_{-1} \otimes x'_n \otimes \delta_{1}) \otimes x_n
\]
It is elementary to verify that $\Phi$ is a $B$-bimodule isomorphism that sends the vacuum vector to the vacuum vector in $\mathcal{Y}$, that if $T \in C \cup D$ then viewing $T \in C_1 \cup D_1$,
\[
\Phi(T \eta) = T \eta
\]
for all $\eta$, and that viewing $T \in C_2 \cup D_2$,
\[
\Phi(T \eta) = U^{-1}_n TU_n \eta
\]
where $n = \ell$ if $T \in B_2$ and $n = r$ if $T \in C_2$.
The result follows.
\end{proof}
\begin{rem}
Given a two-faced family $(C, D)$ in a non-commutative probability space $(\A, \varphi)$, we may always enlarge $\A$ via the operator model from \cite{CNS2014}*{Theorem 6.4.1} to include a $\mathbb{C}$-valued Haar bi-unitary $(U_\ell, U_r)$ bi-free from $(C, D)$.
\end{rem}

\subsection{Bi-Free Families where Left and Right Faces Commute}

The following result demonstrates that if all of the left faces commute with all of the right faces, then, in certain circumstances, bi-freeness can be deduced from freeness of either the left faces or the right faces.
\begin{thm}
\label{thmcommutingandfreeimpliesbifree}
Let $(\mathcal{A},E, \varepsilon)$ be a $B$-$B$-non-commutative probability space acting on the $B$-bimodule $\X = B \oplus \mathring{\X}$ (as in Theorem \ref{thm:representingbbncps} for example) and let $\xi = 1_B \oplus 0 \in \X$.  Suppose $\{(C_k, D_k)\}_{k \in K}$ is a family of pairs of $B$-faces such that
\begin{enumerate}
\item $C_n$ and $D_m$ commute for all $n, m \in K$, and
\item for each $T \in D_k$ there exists an $S \in C_k$ such that $T \xi = S \xi$.
\end{enumerate}
Then $\{(C_k, D_k)\}_{k \in K}$ are bi-free with amalgamation over $B$ if and only if $\{C_k\}_{k \in K} $ are free with amalgamation over $B$.  Therefore, if $\{C_k\}_{k \in K} $ are free with amalgamation over $B$ then $\{D_k\}_{k \in K} $ are free with amalgamation over $B$.
\end{thm}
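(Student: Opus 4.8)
The plan is to prove the stated equivalence, after which the final ``Therefore'' clause is immediate: the remark following Definition \ref{defn:pairofBfaces} already records that if $\{(C_k,D_k)\}_{k\in K}$ is bi-free over $B$ then both $\{C_k\}_{k\in K}$ and $\{D_k\}_{k\in K}$ are free over $B$. Hence, once the equivalence is in hand, assuming $\{C_k\}_{k\in K}$ free over $B$ gives $\{(C_k,D_k)\}_{k\in K}$ bi-free over $B$, and that same remark then yields $\{D_k\}_{k\in K}$ free over $B$. So the real work is the direction ``$\{C_k\}_{k\in K}$ free $\Rightarrow$ $\{(C_k,D_k)\}_{k\in K}$ bi-free,'' which I would establish by verifying the vanishing of mixed cumulants and invoking Theorem \ref{thmequivalenceofbifreeandcombintoriallybifree}.

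To set up, fix $\chi:\{1,\ldots,n\}\to\{\ell,r\}$, a non-constant $\epsilon:\{1,\ldots,n\}\to K$, and operators $T_k$ with $T_k\in C_{\epsilon(k)}$ if $\chi(k)=\ell$ and $T_k\in D_{\epsilon(k)}$ if $\chi(k)=r$. For each $k$ with $\chi(k)=r$ I would use hypothesis (2) to pick $S_k\in C_{\epsilon(k)}$ with $T_k\xi=S_k\xi$, and set $S_k=T_k$ when $\chi(k)=\ell$. I then form the reordered tuple $\hat T_m := S_{s_\chi(m)}$, which lists the operators in the order $\prec_\chi$ with each right entry replaced by its left representative; note every $\hat T_m$ is a left operator lying in $C_{\epsilon(s_\chi(m))}$.

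The crux will be the reduction of the moment function to the purely-left (free) setting: for every $\pi\in BNC(\chi)$,
\[
\mathcal{E}_\pi(T_1,\ldots,T_n)=\mathcal{E}_{s_\chi^{-1}\cdot\pi}(\hat T_1,\ldots,\hat T_n),
\]
where $\mathcal{E}$ on the right is taken with respect to the constant shading $k\mapsto\ell$, for which it coincides with the operator-valued free moment function of $\{C_k\}$ (Remark \ref{remonbimultiplicative}). For $\pi=1_\chi$ this reads $E(T_1\cdots T_n)=E(\hat T_1\cdots\hat T_n)$, which I would prove by first commuting every right operator past every left operator via hypothesis (1) (preserving relative order within the lefts and within the rights), then repeatedly applying $T_j\xi=S_j\xi$ to the vacuum-adjacent right operator and commuting the resulting left operator $S_j$ leftward past the remaining rights; the surviving product is exactly $\hat T_1\cdots\hat T_n$. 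For general $\pi$ I would appeal to Theorem \ref{thm:samantha}: since $\mathcal{E}$ is bi-multiplicative, Remark \ref{remonbimultiplicative} shows $\mathcal{E}_\pi(T_1,\ldots,T_n)$ is assembled from the full-block moments $\mathcal{E}_{1_{\chi|_V}}((T_1,\ldots,T_n)|_V)$, $V\in\pi$, by the very same multiplicative recipe (indexed by $s_\chi^{-1}\cdot\pi\in NC(n)$) that assembles $\mathcal{E}_{s_\chi^{-1}\cdot\pi}(\hat T_1,\ldots,\hat T_n)$ from its full-block moments. Those full-block moments agree by the $\pi=1_\chi$ case applied to each block, so it remains only to check that the interleaved left/right $B$-operator insertions dictated by Property (iv) of Definition \ref{defnbimultiplicative} match the purely-left insertions of the free recipe. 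I expect this $L_b$--$R_b$ reconciliation inside nested blocks to be the main obstacle; it should follow from $L_b\xi=R_b\xi=b\oplus 0$ together with hypothesis (1), handled by mirroring the commuting case analysis already carried out in Lemmas \ref{lemproductofdisjointblocksforE}--\ref{lemfullreductionofproductsinvolvingblocksinsideblocks}.

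Granting this reduction, the conclusion is quick. Because $\mu_{BNC}(\sigma,1_\chi)=\mu_{NC}(s_\chi^{-1}\cdot\sigma,1_n)$ and $\sigma\mapsto s_\chi^{-1}\cdot\sigma$ is an order isomorphism of $BNC(\chi)$ onto $NC(n)$, convolving with $\mu_{BNC}$ gives
\[
\kappa_{1_\chi}(T_1,\ldots,T_n)=\sum_{\sigma\in BNC(\chi)}\mathcal{E}_\sigma(T_1,\ldots,T_n)\,\mu_{BNC}(\sigma,1_\chi)=\sum_{\tau\in NC(n)}\mathcal{E}_\tau(\hat T_1,\ldots,\hat T_n)\,\mu_{NC}(\tau,1_n)=\kappa_{1_n}(\hat T_1,\ldots,\hat T_n),
\]
the operator-valued free cumulant of $\hat T_1,\ldots,\hat T_n$. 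Since $\epsilon$ is non-constant and $s_\chi$ is a bijection, the labels $\epsilon\circ s_\chi$ of $\hat T_1,\ldots,\hat T_n$ are non-constant as well, so this is a \emph{mixed} free cumulant of the family $\{C_k\}$ and therefore vanishes by the freeness of $\{C_k\}$ over $B$. Thus all mixed bi-free cumulants vanish, and Theorem \ref{thmequivalenceofbifreeandcombintoriallybifree} gives that $\{(C_k,D_k)\}_{k\in K}$ is bi-free over $B$; combined with the first paragraph this completes the equivalence and hence the final statement that $\{C_k\}_{k\in K}$ free over $B$ implies $\{D_k\}_{k\in K}$ free over $B$.
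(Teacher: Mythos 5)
Your route is genuinely different from the paper's: you replace \emph{all} right operators at once by left representatives, reorder by $s_\chi$, identify the mixed bi-free cumulant with a mixed operator-valued \emph{free} cumulant of $\{C_k\}$, and conclude via Theorem \ref{thmequivalenceofbifreeandcombintoriallybifree}; the paper instead verifies the universal moment polynomials of Theorem \ref{thm:bifreeequivalenttouniversalpolys} by induction on $|\chi^{-1}(\{r\})|$, eliminating one right operator at a time. Your $\pi=1_\chi$ computation is correct (the decreasing enumeration of right nodes built into $s_\chi$ is exactly the order in which the representatives $S_j$ accumulate as you peel right operators off the vacuum), and your endgame (M\"obius inversion, non-constancy of $\epsilon\circ s_\chi$, vanishing of mixed free cumulants) is fine. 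The gap is the step you flag yourself: the identity $\mathcal{E}_\pi(T_1,\ldots,T_n)=\mathcal{E}_{s_\chi^{-1}\cdot\pi}(\hat T_1,\ldots,\hat T_n)$ for general $\pi$. It does not follow from Theorem \ref{thm:samantha} together with ``the full-block moments agree by the $\pi=1_\chi$ case'': bi-multiplicativity assembles $\mathcal{E}_\pi$ from moments of \emph{modified} tuples, since Property \ref{def:bimult:iv} inserts each inner-block value $b$ as $T_\theta L_b$ or $R_b T_\theta$, so the outer blocks are evaluated at entries which are no longer the original operators, and your $\pi=1_\chi$ case does not literally apply to them. What is actually needed is an induction from innermost blocks outward, carrying along a compatibility statement for hypothesis (2) under these insertions: if $T\in\A_r$, $S\in\A_\ell$ and $T\xi=S\xi$, then $(R_bT)\xi=(SL_b)\xi$ and $(TR_b)\xi=(L_bS)\xi$ (both follow from $L_b\xi=R_b\xi=b\oplus 0$ together with the defining commutations of $\A_\ell$ and $\A_r$); note the order reversal, consistent with the reversed enumeration of right nodes. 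This is provable, but it is the technical heart of your argument, and it is not contained in Lemmas \ref{lemproductofdisjointblocksforE}--\ref{lemfullreductionofproductsinvolvingblocksinsideblocks}, which manipulate a single bi-multiplicative function and never exchange a right operator for a left representative.

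The paper's induction is organized precisely to avoid this simultaneous replacement. After using hypothesis (1) and bi-multiplicativity to move all right nodes below the left ones, it replaces only the bottom-most operator $T_{s(n)}$ by its representative $S$. Since $n$ is the last index, $T_{s(n)}$ occurs as the right-most factor, applied directly to $\xi$, in every nested expectation arising in every $E_\pi$, so $E_\pi(T_{s(1)},\ldots,T_{s(n)})=E_{\overline{\pi}}(T_{s(1)},\ldots,T_{s(n-1)},S)$ holds for all $\pi$ simultaneously with no analysis of $L_b$/$R_b$ insertions; the M\"obius sums match under $\pi\mapsto\overline{\pi}$, and the induction on the number of right nodes closes. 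If you do complete your reduction lemma, your argument yields a stronger and arguably more conceptual fact --- for such commuting families the bi-free cumulants literally are free cumulants of the left family --- but as submitted the proof is incomplete at that point.
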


\begin{proof}
If $\{(C_k, D_k)\}_{k \in K}$ are bi-free over $B$ then it is clear that $\{C_k\}_{k \in K} $ are free over $B$ and $\{D_k\}_{k \in K} $ are free over $B$.
\par 
Suppose $\{C_k\}_{k \in K}$ are free over $B$.  To show that $\{(C_k, D_k)\}_{k \in K}$ are bi-free over $B$ we need to verify the operator polynomials
\[
E\left(T_1 \cdots T_n\right)
= \sum_{\pi\in BNC(\chi)} \left[\sum_{\substack{\sigma\in BNC(\chi)\\ \pi\leq\sigma\leq \epsilon}} \mu_{BNC}(\pi, \sigma)\right] E_\pi(T_1, \ldots, T_n)
\]
whenever $\chi : \set{1, \ldots, n} \to \set{\ell, r}$, $\epsilon : \set{1, \ldots, n} \to K$, and $T_k \in C_{\epsilon(k)}$ if $\chi(k) = \ell$, and $T_k \in D_{\epsilon(k)}$ if $\chi(k) = r$.
Note if $\chi^{-1}(\{\ell\}) = \{1,\ldots, n\}$, the freeness of $\{C_k\}_{k \in K}$ implies that these polynomials hold.
Thus we will proceed by induction on $|\chi^{-1}(\{r\})|$ with the base case of $|\chi^{-1}(\{r\})|=0$ complete.

Let $s$ be the permutation such that 
\[
\chi^{-1}\paren{\set\ell} = \set{s(1) < \ldots < s(k)} \qquad \mbox{and} \qquad \chi^{-1}\paren{\set r} = \set{s(k+1) < \ldots < s(n)}.
\]
Let $\hat\e = \e\circ s$, and $\hat\chi = \chi\circ s$. 
Note that replacing $\chi$ by $\hat\chi$ and $\e$ by $\hat\e$ corresponds to moving the right nodes beneath the left ones, without changing their relative order.
In particular, due to commutation, $T_{s(1)} \cdots T_{s(n)} = T_1 \cdots T_n$ and bi-multiplicativity implies
\[
E\left(T_1 \cdots T_n\right)
= \sum_{\pi\in BNC(\chi)} \left[\sum_{\substack{\sigma\in BNC(\chi)\\ \pi\leq\sigma\leq \epsilon}} \mu_{BNC}(\pi, \sigma)\right] E_\pi(T_1, \ldots, T_n)
\]
if and only if
\[
E\left(T_{s(1)} \cdots T_{s(n)}\right)
= \sum_{\pi\in BNC(\hat\chi)} \left[\sum_{\substack{\sigma\in BNC(\hat\chi)\\ \pi\leq\sigma\leq \hat\e}} \mu_{BNC}(\pi, \sigma)\right] E_\pi(T_{s(1)}, \ldots, T_{s(n)}).
\]

To reduce the number of right operators, we note that $ \hat{\chi}(n) = r$ and select $S \in C_{\hat{\e}(n)}$ such that $S\xi = T_{s(n)} \xi$.
Define $\overline{\chi} : \{1,\ldots, n\} \to \{\ell, r\}$ by 
\[
\overline{\chi}(p) = \left\{
\begin{array}{ll}
\hat{\chi}(p) & \mbox{if } p < n  \\
\ell & \mbox{if } p = n
\end{array} \right. .
\]
Clearly there is a canonical map from $BNC(\hat{\chi})$ to $BNC(\overline{\chi})$ that takes $\pi \in BNC(\hat{\chi})$ and produces $\overline{\pi} \in BNC(\overline{\chi})$ by moving the bottom node of $\pi$ from a right node to a left node while keeping all strings attached.
Note that
\[
\sum_{\substack{\sigma\in BNC(\hat{\chi})\\ \pi\leq\sigma\leq \hat{\epsilon}}} \mu_{BNC}(\pi, \sigma) = \sum_{\substack{\sigma\in BNC(\overline{\chi})\\ \overline{\pi}\leq\overline{\sigma}\leq \hat{\epsilon}}} \mu_{BNC}(\overline{\pi}, \overline{\sigma}).
\]
Moreover, $S\xi = T_{s(n)} \xi$ implies that for all $\pi \in BNC(\hat\chi)$,
\[
E_\pi(T_{s(1)}, \ldots, T_{s(n)}) = E_{\overline{\pi}} (T_{s(1)}, \ldots, T_{s(n-1)}, S).
\]
Then we see that
\[
E\left(T_{s(1)} \cdots T_{s(n)}\right)
= \sum_{\pi\in BNC(\hat\chi)} \left[\sum_{\substack{\sigma\in BNC(\hat\chi)\\ \pi\leq\sigma\leq \paren{\hat\e}}} \mu_{BNC}(\pi, \sigma)\right] E_\pi(T_{s(1)}, \ldots, T_{s(n)}).
\]
if and only if
\[
E\left(T_{s(1)} \cdots T_{s(n-1)} S\right) = \sum_{\overline{\pi}\in BNC(\overline{\chi})} 
\left[\sum_{\substack{\sigma\in BNC(\overline{\chi})\\ \overline{\pi}\leq\overline{\sigma}\leq \hat{\e}}} \mu_{BNC}(\overline{\pi}, \overline{\sigma})\right] 
E_{\overline{\pi}} (T_{s(1)}, \ldots, T_{s(n-1)}, S).
\]
Since the last equation holds by our inductive hypothesis, the proof is complete.
\end{proof}

\begin{cor}
Let $(\mathcal{A},E, \varepsilon)$ be a $B$-$B$-non-commutative probability space acting on the $B$-bimodule $\X = B \oplus \mathring{\X}$ and let $\xi = 1_B \oplus 0 \in \X$.  Suppose $\{(C_k, D_k)\}_{k \in K}$ is a family of pairs of $B$-faces such that
\begin{enumerate}
\item $C_n$ and $D_m$ commute for all $n, m \in K$, and
\item $\set{T\xi : T \in C_k} = \set{S\xi : S \in D_k}$ for all $k \in K$.
\end{enumerate}
Then $\{(C_k, D_k)\}_{k \in K}$ are bi-free over $B$ if and only if $\{C_k\}_{k \in K} $ are free over $B$ if and only if $\{D_k\}_{k \in K} $ are free over $B$.
\end{cor}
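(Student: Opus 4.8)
The plan is to obtain the corollary from Theorem~\ref{thmcommutingandfreeimpliesbifree} by exploiting the fact that, unlike in that theorem, hypothesis (2) here is symmetric in the families $\set{C_k}_{k \in K}$ and $\set{D_k}_{k \in K}$. First I would verify that the hypotheses of Theorem~\ref{thmcommutingandfreeimpliesbifree} hold. Hypothesis (1) is identical, and for hypothesis (2), given $T \in D_k$ the set equality $\set{T\xi : T \in C_k} = \set{S\xi : S \in D_k}$ forces $T\xi \in \set{S\xi : S \in C_k}$, so there is some $S \in C_k$ with $T\xi = S\xi$. Thus Theorem~\ref{thmcommutingandfreeimpliesbifree} applies verbatim and yields that $\set{(C_k, D_k)}_{k \in K}$ is bi-free over $B$ if and only if $\set{C_k}_{k \in K}$ is free over $B$.

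It remains to bring freeness of $\set{D_k}_{k \in K}$ into this equivalence, and for this I would appeal to the left--right reflection symmetry of the bi-free framework (the ``consult a mirror'' principle used throughout Section~\ref{sec:verifyingrecursivedefinitionfromuniversalpolynomialshasdesiredproperties}). Reading the same set equality in the other direction shows that for each $S \in C_k$ there is $T \in D_k$ with $S\xi = T\xi$, which is precisely hypothesis (2) of Theorem~\ref{thmcommutingandfreeimpliesbifree} after interchanging the roles of the left and right faces. I would then re-run the proof of Theorem~\ref{thmcommutingandfreeimpliesbifree} in its mirrored form: the argument there reduces the universal moment polynomials of Theorem~\ref{thm:bifreeequivalenttouniversalpolys} to the all-left case by repeatedly replacing the bottom-most right operator by a left operator agreeing with it on $\xi$, whereas the mirror reduces instead to the all-right case by replacing the top-most left operator by a right operator agreeing with it on $\xi$, which is available by the reverse of hypothesis (2). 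Since $BNC(\chi)$ for $\chi \equiv r$ is the reflection of $NC(n)$ and the all-right moment polynomials are exactly the free moment--cumulant relations for $\set{D_k}_{k \in K}$, this mirrored argument yields that $\set{(C_k, D_k)}_{k \in K}$ is bi-free over $B$ if and only if $\set{D_k}_{k \in K}$ is free over $B$.

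Combining the two equivalences then gives that bi-freeness of $\set{(C_k, D_k)}_{k \in K}$, freeness of $\set{C_k}_{k \in K}$, and freeness of $\set{D_k}_{k \in K}$ are all equivalent, which is the statement of the corollary.

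The hard part will be making the middle step precise. The cleanest rigorous route is to pass to the opposite non-commutative probability space, with $B$ replaced by $B^{\mathrm{op}}$ and $\varepsilon$ replaced by $\varepsilon^{\mathrm{op}}(b_1 \otimes b_2) = \varepsilon(b_2 \otimes b_1)$, so that the roles of $\A_\ell$ and $\A_r$ are swapped and each $(D_k, C_k)$ becomes a legitimate pair of faces; one checks directly that $E$ still satisfies the defining properties relative to $\varepsilon^{\mathrm{op}}$ over $B^{\mathrm{op}}$. One must then confirm that every ingredient used in Theorem~\ref{thmcommutingandfreeimpliesbifree} is invariant under the reflection $\chi \mapsto \bar\chi$ interchanging $\ell$ and $r$: the commutation of left with right operators, the behaviour of $\mu_{BNC}$ and of $E_\pi$ under reflection, Theorem~\ref{thm:twosums}, and the identification of the all-right polynomials with freeness over $B^{\mathrm{op}}$ (equivalently, over $B$, via the standard reflection of non-crossing partitions). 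These are exactly the routine reindexings tacitly invoked by the earlier ``mirror'' arguments, so no essential difficulty arises beyond this bookkeeping.
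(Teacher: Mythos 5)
Your proposal is correct and follows what is essentially the paper's own (implicit) argument: the corollary is stated without proof as an immediate consequence of Theorem \ref{thmcommutingandfreeimpliesbifree}, the point being exactly that the symmetric hypothesis (2) lets one apply that theorem both as stated (giving bi-freeness $\Leftrightarrow$ freeness of $\{C_k\}_{k\in K}$) and in its left--right mirrored form (giving bi-freeness $\Leftrightarrow$ freeness of $\{D_k\}_{k\in K}$). Your extra step of making the mirror rigorous via the opposite structure $(B^{\mathrm{op}}, \varepsilon^{\mathrm{op}})$ and checking reflection-invariance of the ingredients is sound and simply spells out the bookkeeping the paper leaves tacit.
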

\begin{cor}
\label{corcommutingandfreeimpliesbifree}
Let $(\mathcal{A},\varphi)$ be a non-commutative probability space acting on a pointed vector space $(X, \xi)$ with $\xi$ realizing $\varphi$.  Suppose $\{(C_k, D_k)\}_{k \in K}$ is a family of pairs of faces such that
\begin{enumerate}
\item $C_n$ and $D_m$ commute for all $n, m \in K$, and
\item for each $T \in D_k$ there exists an $S \in C_k$ such that $T \xi = S \xi$.
\end{enumerate}
Then $\{(C_k, D_k)\}_{k \in K}$ are bi-free if and only if $\{C_k\}_{k \in K} $ are free.  Therefore, if $\{C_k\}_{k \in K} $ are free then $\{D_k\}_{k \in K} $ are free.
\end{cor}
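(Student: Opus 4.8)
The plan is to deduce this corollary directly from Theorem \ref{thmcommutingandfreeimpliesbifree} by specializing to the case $B = \mathbb{C}$. First I would observe that the scalar data in the corollary---a non-commutative probability space $(\mathcal{A}, \varphi)$ acting on a pointed vector space $(X, \xi)$ with $\xi$ realizing $\varphi$---is precisely an instance of the hypotheses of Theorem \ref{thmcommutingandfreeimpliesbifree} with $B = \mathbb{C}$. Indeed, writing $X = \mathbb{C}\xi \oplus \mathring{X}$ and identifying $\mathbb{C}\xi$ with $\mathbb{C} = B$, the space $X$ becomes a $\mathbb{C}$-$\mathbb{C}$-bimodule with specified $\mathbb{C}$-vector state $(X, \mathring{X}, p)$, where $p : X \to \mathbb{C}$ is the projection onto the $\xi$-component; the statement that $\xi$ realizes $\varphi$ says exactly that $E_{\mathcal{L}(X)}(T) = p(T\xi) = \varphi(T)$ for all $T \in \mathcal{A}$, so that $(\mathcal{A}, \varphi, \varepsilon)$ is a $\mathbb{C}$-$\mathbb{C}$-non-commutative probability space with $\varepsilon(b_1 \otimes b_2)$ acting as scalar multiplication by $b_1 b_2$.

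Next I would record the trivializations that occur when $B = \mathbb{C}$. Since $\mathbb{C}$ is central in $\mathcal{A}$, we have $\mathcal{A}_\ell = \mathcal{A}_r = \mathcal{A}$, so a pair of $\mathbb{C}$-faces is simply a pair of faces in the usual sense, matching the corollary's setup. Moreover, bi-freeness with amalgamation over $\mathbb{C}$ coincides with ordinary bi-freeness, and freeness with amalgamation over $\mathbb{C}$ coincides with ordinary freeness, because the reduced free product of $\mathbb{C}$-$\mathbb{C}$-bimodules from Construction \ref{cons:freeproductconstruction} and the associated vacuum-state computations reduce to the classical reduced free product of pointed vector spaces. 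The hypotheses (1) and (2) of the corollary are verbatim the specializations of hypotheses (1) and (2) of Theorem \ref{thmcommutingandfreeimpliesbifree}, with the distinguished vector $\xi$ playing the role of $1_B \oplus 0$.

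With these identifications in place, the conclusion is immediate: Theorem \ref{thmcommutingandfreeimpliesbifree} applied with $B = \mathbb{C}$ yields that $\{(C_k, D_k)\}_{k \in K}$ are bi-free if and only if $\{C_k\}_{k \in K}$ are free, and the final assertion---that freeness of $\{C_k\}_{k \in K}$ forces freeness of $\{D_k\}_{k \in K}$---follows as the corresponding consequence in the theorem. I do not anticipate any genuine obstacle here; the only point requiring care is the bookkeeping of the reduction, namely confirming that each abstract notion (the expectation $E$, the operators $L_b$ and $R_b$, the left and right algebras, and the two flavours of independence) collapses to its classical scalar counterpart when $B = \mathbb{C}$, which is routine once one notes the centrality of the scalars.
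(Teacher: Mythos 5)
Your proposal is correct and matches the paper's intent exactly: the corollary is stated without proof immediately after Theorem \ref{thmcommutingandfreeimpliesbifree}, precisely because it is the specialization $B = \mathbb{C}$, where a pointed vector space becomes a $\mathbb{C}$-$\mathbb{C}$-bimodule with specified $\mathbb{C}$-vector state, $\mathcal{A}_\ell = \mathcal{A}_r = \mathcal{A}$, and (bi-)freeness with amalgamation over $\mathbb{C}$ reduces to ordinary (bi-)freeness. Your bookkeeping of these reductions is exactly the routine verification the paper leaves implicit.
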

The above is particularly interesting as it enables the transference of freeness from one algebra to another.

\end{document}